\newcommand{\N}{\mathbb{N}}
\newcommand{\R}{\mathbb{R}}
\newcommand{\U}{\mathfrak{U}}
\newcommand{\B}{\mathfrak{B}}
\newtheorem{prop}{Proposition}[section]
\newtheorem{cor}[prop]{Corollary}
\newtheorem{teo}[prop]{Theorem}
\newtheorem{lema}[prop]{Lemma}
\theoremstyle{definition}
\newtheorem*{ex}{Example}
\newtheorem{defi}[prop]{Definition}
\newtheorem*{obs}{Remark}
\newtheorem*{acknowledgements}{Acknowledgements}
\title{Some properties that are preserved by transferring boundary functors}
\author{Lucas H. R. de Souza}
\begin{document}

\DeclareGraphicsExtensions{.pdf,.jpg,.mps,.png,}

\maketitle

\def\eod{\hfill$\square$}

\begin{abstract}If a Hausdorff locally compact paracompact space has a coarse structure, then there is a family of well behaved compactifications associated to it. If there are two of these spaces, $X$ and $Y$, with a good coarse equivalence, then there is a correspondence between these families of compactifications of $X$ and $Y$. On the other hand, if a group $G$ has a properly discontinuous cocompact action on a Hausdorff locally space $X$, then there is also a correspondence between nice compactifications of $G$ and nice compactifications of $X$. In this paper we show that when there are both concepts involved (coarse structure and group action), then both correspondences of families of compactifications agree.

We also prove that these correspondences must preserve some geometric properties of the compactifications.
\end{abstract}

\let\thefootnote\relax\footnote{Mathematics Subject Classification (2010). Primary: 54D35, 20F65; Secondary: 54E45, 54E99, 57S30.}
\let\thefootnote\relax\footnote{Keywords: Perspective compactification, coarse geometry, functors that transfer boundaries, $\delta$-hyperbolic space, relatively hyperbolic group, Floyd compactification.}

\tableofcontents

\section*{Introduction}

There are two good notions of compactifications of locally compact Hausdorff spaces that appear in different contexts.  One of them appears when the space has a coarse structure compatible with its topology (we can think of a proper metric space up to quasi-isometry as an example). We call them coarse geometric perspective compactifications. The other one appears when a group acts properly discontinuously and cocompactly on the locally compact space. We call then group theoretic perspective compactifications. We call both perspective compactifications, when there is no ambiguity. The intuition behind both is that bounded things become small when they go to infinity. 

If $X$ and $Y$ are quasi-isometric AR proper metric spaces that a group $G$ acts geometrically, then Guilbault and Moran constructed a correspondence of $E\mathcal{Z}$-structures of $G$ with respect to $X$ and $Y$ ($E\mathcal{Z}$-version of the Boundary Swapping Theorem, consequence of Corollary 7.3 of \cite{GM}). There is a natural generalization of their correspondence for two coarse equivalent locally compact paracompact spaces with coarsely connected proper coarse structures (Theorems 7.32 and 7.37 of \cite{So}). On the other hand, if a group $G$ acts properly discontinuously and cocompactly on two Hausdorff locally compact spaces $X$ and $Y$, then there is a correspondence between group theoretic perspective compactifications of $X$ and $Y$ (Theorem 3.2 of \cite{So2}). This generalizes a construction given by Gerasimov. His Attractor-Sum Theorem (Theorem 8.3.1 of \cite{Ge2}) shows that if a group acts properly discontinuously and cocompactly on a space $X$ and it acts with the convergence property on a space $Z$, then there is a unique compactification of $G$ with boundary $Z$ such that the induced action on this new space still has the convergence property. He also showed on the same theorem that there is a unique compactification of $X$ with boundary $Z$ such that the induced action on this new space still have the convergence property. This construction from the compactification of $G$ to the compactification of $X$ is that one that we generalize to perspective compactifications.

If a group $G$ has a properly discontinuous cocompact action on a Hausdorff locally compact paracompact space $X$, then there are natural coarse structures for $G$ and $X$ (\textbf{Definition \ref{coarseactionstructure}}) that are coarse equivalent (\textbf{Proposition \ref{milnorsvarccoarse}}). This is a generalization of the Milnor-$\check{S}$varc Lemma. The main goal of this paper is to show that the notions of the geometric coarse perspective compactifications and group theoretic perspective compactifications coincide in this case of $G$ and $X$, considering these natural coarse structures. Moreover, if $G$ is countable, $X$ has a countable basis and we consider only metrizable equivariant compactifications, then both correspondences of compactifications (given by Theorems 3.37 of \cite{So} and Corollary 3.5 of \cite{So2}) also coincide.

Formally, we have the following:

Let $G$ be a countable group, $X$ a Hausdorff locally compact space and $\varphi: G \curvearrowright X$ a properly discontinuous cocompact action. We define $EMPers_{0}(\varphi)$ the category whose objects are group theoretic perspective compactifications of $X$ with respect to $\varphi$ that are metrizable, the action $\varphi$ extends continuously to the compactification and the morphisms are equivariant continuous maps that are the identity on $X$. If $X = G$ and $\varphi$ is the left multiplication action, then we denote $EMPers_{0}(\varphi)$ by $EMPers_{0}(G)$. Corollary 3.5 of \cite{So2} gives an isomorphism of categories $\Pi: EMPers_{0}(G) \rightarrow EMPers_{0}(\varphi)$.

Let $(X, \varepsilon)$ be a locally compact paracompact Hausdorff space with a  coarsely connected proper coarse structure. We define $MPers(\varepsilon)$ the category whose objects are coarse geometric perspective compactifications of $X$ that are metrizable and morphisms are continuous maps that are the identity on $X$. Theorem 7.37 of \cite{So} says that a coarse equivalence $\pi: (Y,\eta) \rightarrow (X,\varepsilon)$ induces an isomorphism of categories $\Pi': MPers(\varepsilon) \rightarrow MPers(\eta)$.

If a countable group $G$ acts properly discontinuously and cocompactly on a Hausdorff locally compact paracompact space $X$, we can consider natural coarse structures $\varepsilon_{G}$ for $G$ and $\varepsilon_{\varphi}$ for $X$ (\textbf{Definition \ref{coarseactionstructure}}). Then an equivariant compactification of $X$ is group theoretic perspective if and only if it is coarse geometric perspective (\textbf{Proposition \ref{coarsegroup}}). Our version of the Milnor-$\check{S}$varc Lemma says that if we choose a point $x_{0} \in X$, then the map $\varphi_{x_{0}}: G \rightarrow X$ that sends an element $g$ to $gx_{0}$ is a coarse equivalence between these coarse structures (\textbf{Proposition \ref{milnorsvarccoarse}}). Then we have the following:

\

\

$\!\!\!\!\!\!\!\!\!\!\!\!$ \textbf{Theorem \ref{main}} Let $\Pi: EMPers_{0}(G) \rightarrow EMPers_{0}(\varphi)$ be the functor given by the action $\varphi$ and $\Pi': MPers(\varepsilon_{G}) \rightarrow MPers(\varepsilon_{\varphi})$ be the functor given by the quasi-isometry $\varphi_{x_{0}}$. Then $\Pi'|_{EMPers_{0}(G)} = \Pi$.

\


We also reinterpret some well known results by proving that the functors showed above preserve some classes of compactifications:

\begin{enumerate}
    \item If two proper $\delta$-hyperbolic geodesic spaces are quasi-isometric, then the induced functor sends the hyperbolic compactification to the hyperbolic compactification (\textbf{Theorem \ref{preserveshyperboliccompactification}}).
    \item If $G$ and $H$ are finitely generated groups that are quasi-isometric, then the induced functor sends relatively hyperbolic compactifications of $G$ with respect to subgroups that are not relatively hyperbolic to relatively hyperbolic compactifications of $H$ with respect to subgroups that are not relatively hyperbolic (\textbf{Proposition \ref{preserviingrelativelyhyperboliccompactification}}).
    \item If two locally finite graphs $\Gamma_{1}$ and $\Gamma_{2}$ are $\alpha$-quasi-isometric, for some function $\alpha$ which has nice relations with two Floyd maps $f_{1}$ and $f_{2}$, then the induced functor sends the Floyd compactification of $\Gamma_{1}$ with respect to $f_{1}$ to the Floyd compactification of $\Gamma_{2}$ with respect to $f_{2}$ (\textbf{Corollary \ref{preservingfloydcompactifications}}).
\end{enumerate}

\begin{acknowledgements}This paper contains part of my PhD thesis. It was written under the advisorship of Victor Gerasimov, to whom I am grateful. I also would like to thank Christopher Hruska, who showed me Groff's paper \cite{Gro} with that well behavior between relatively hyperbolic pairs and quasi-isometries.
\end{acknowledgements}

\section{Preliminaries}

\subsection{Artin-Wraith glueings}

This theory is covered in \cite{So}. We need to use it in the following sections just as a language to define some compactifications of spaces.

\begin{defi}Let $X, Y$ be topological spaces and $f: Closed(X) \rightarrow Closed (Y)$ a map such that $\forall A,B \in Closed(X), \ f(A\cup B) = f(A)\cup f(B)$ and $f(\emptyset) = \emptyset$ (we will say that such map is admissible). We will give a topology for $X\dot{\cup} Y$. Let's declare as a closed set $A \subseteq X\dot{\cup} Y$ if $A \cap X \in Closed(X), \ A \cap Y \in Closed(Y)$ and $f(A\cap X) \subseteq A$. Therefore, let's denote by $\tau_{f}$ the set of the complements of this closed sets and $X+_{f}Y = (X\dot{\cup} Y,\tau_{f})$.
\end{defi}

Every space $X$ that is a union of disjoint subspaces $A$ and $B$, where $A$ is open on $X$, can be recovered uniquely as an Artin-Wraith glueing of $A$ and $B$.

\begin{defi}Let $X+_{f} \! Y$ and $Z+_{h} \! W$ be topological spaces and continuous maps $\psi: X \rightarrow Z$ and $\phi: Y \rightarrow W$. We define $\psi + \phi: X+_{f}Y \rightarrow Z+_{h}W$ by $(\psi + \phi)(x) = \psi(x)$ if $x \in X$ and $\phi(x)$ if $x \in Y$. If $G$ is a group, $\psi: G \curvearrowright X$ and $\phi: G \curvearrowright Y$, then we define $\psi+\phi: G \curvearrowright X+_{f}Y$ by $(\psi+\phi)(g,x) = \psi(g,x)$ if $x \in X$ and $\phi(g,x)$ if $x \in Y$.
\end{defi}

\begin{prop}\label{continuousidentity} (Corollary 2.15 of \cite{So}) Let $X+_{f}Y$, $X+_{g}Y$ be topological spaces. Then, the map $id: X+_{f}Y \rightarrow X+_{g}Y$ is continuous if and only if $\forall A \in Closed (X)$, $f(A) \subseteq g(A)$.
\end{prop}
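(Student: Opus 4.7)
The plan is to translate continuity of the identity into a statement about closed sets and then prove each direction by manipulating the three conditions that define closedness in an Artin-Wraith glueing. Recall that $C \subseteq X \dot{\cup} Y$ is closed in $X+_h Y$ exactly when $C \cap X \in Closed(X)$, $C \cap Y \in Closed(Y)$, and $h(C \cap X) \subseteq C$. Since $id: X+_f Y \to X+_g Y$ is continuous iff every closed set of $X+_g Y$ is closed in $X+_f Y$, everything reduces to comparing the third condition for $f$ versus $g$; the first two conditions are identical and play no role.

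For the ``if'' direction, suppose $f(A) \subseteq g(A)$ for every $A \in Closed(X)$. Given $C$ closed in $X+_g Y$, I would set $A := C \cap X$, which is closed in $X$, and apply the hypothesis to get $f(A) \subseteq g(A) \subseteq C$, which is precisely the remaining condition needed for $C$ to be closed in $X+_f Y$. Hence the identity is continuous.

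For the ``only if'' direction, given $A \in Closed(X)$, the idea is to cook up a specific closed set of $X+_g Y$ whose $f$-closure condition forces $f(A) \subseteq g(A)$. The natural candidate is $C := A \cup g(A) \subseteq X \dot{\cup} Y$. Then $C \cap X = A$ is closed in $X$, $C \cap Y = g(A)$ is closed in $Y$ because $g$ takes values in $Closed(Y)$, and $g(C \cap X) = g(A) \subseteq C$ tautologically, so $C$ is closed in $X+_g Y$. By continuity of $id$, $C$ is closed in $X+_f Y$, giving $f(A) \subseteq C$; since $f(A) \subseteq Y$ and $C \cap Y = g(A)$, we conclude $f(A) \subseteq g(A)$.

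I do not expect a serious obstacle here; the only thing to be careful about is the direction of the implication for continuity (preimages of closed are closed, and since the underlying set map is the identity, ``preimage'' is literal), and the fact that admissibility is not needed in either direction because $f$ and $g$ are already declared to land in $Closed(Y)$. If one wanted to be completely explicit, I would insert the intermediate sentence ``the identity map $id: X+_f Y \to X+_g Y$ is continuous if and only if for every closed set $C$ of $X+_g Y$ the set $C$ is also closed in $X+_f Y$'' right at the start, to make the equivalence of conditions transparent.
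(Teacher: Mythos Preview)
Your proof is correct. Note, however, that the paper does not actually prove this proposition: it is stated without proof as a citation of Corollary~2.15 of \cite{So}, so there is no ``paper's own proof'' to compare against. Your argument is the natural one and is almost certainly what appears in the cited source: unwind the definition of closed sets in $X+_h Y$, observe that the first two conditions are identical for $f$ and $g$, and for the converse direction test continuity on the canonical closed set $A \cup g(A)$.
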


\begin{prop}\label{compactglue} (Proposition 2.12 of \cite{So}) Let $X,Y$ be topological spaces with $Y$ compact and $f$ an admissible map. Then $X+_{f}Y$ is compact if and only if $\forall A \in Closed(X)$ non compact, $f(A) \neq \emptyset$.
\end{prop}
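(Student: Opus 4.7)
The plan is to reduce both directions to the finite intersection property (FIP) characterization of compactness, after first verifying that $X$ sits inside $X+_{f}Y$ as an open subspace whose induced topology coincides with its original one. The openness follows because $Y$ is closed in $X+_{f}Y$ (check: $Y\cap X=\emptyset$, $Y\cap Y=Y$, $f(\emptyset)=\emptyset\subseteq Y$), and the subspace topology on $X$ agrees with the original since for each closed $B\subseteq X$ the set $B\cup f(B)$ is closed in $X+_{f}Y$ by admissibility and meets $X$ in exactly $B$.

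For the forward direction, assume $X+_{f}Y$ is compact and let $A\subseteq X$ be closed with $f(A)=\emptyset$. The three conditions for closedness in $X+_{f}Y$ hold trivially ($A\cap X=A$, $A\cap Y=\emptyset$, $f(A)=\emptyset\subseteq A$), so $A$ is closed in $X+_{f}Y$, hence compact; by the subspace observation, $A$ is compact in $X$ as well. Contrapositively, every closed non-compact subset of $X$ has nonempty image under $f$.

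For the converse, I would take an arbitrary family $\{C_{i}\}_{i\in I}$ of closed subsets of $X+_{f}Y$ with the FIP and produce a common point. Writing $A_{i}:=C_{i}\cap X$ and $B_{i}:=C_{i}\cap Y$, I split into two cases. If the family $\{B_{i}\}_{i\in I}$ itself has the FIP in $Y$, then compactness of $Y$ yields a point $y\in\bigcap_{i}B_{i}\subseteq\bigcap_{i}C_{i}$. Otherwise, some finite subfamily satisfies $B_{i_{1}}\cap\cdots\cap B_{i_{n}}=\emptyset$, so the intersection $C:=C_{i_{1}}\cap\cdots\cap C_{i_{n}}$ is entirely contained in $X$; being closed in $X+_{f}Y$, it is closed in $X$ with $f(C)\subseteq C\cap Y=\emptyset$, and the hypothesis forces $C$ to be compact in $X$. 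Then $\{C\cap C_{j}\}_{j\in I}$ is a family of closed subsets of $C$ still satisfying the FIP, and compactness of $C$ delivers the desired common point in $\bigcap_{j}C_{j}$.

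The main obstacle is bookkeeping rather than genuine difficulty: one must keep the two topologies straight to ensure that \emph{compact in $X$} and \emph{compact in $X+_{f}Y$} agree for subsets of $X$, and be attentive to the degenerate case where some $B_{i}$ is empty, which simply forces one into the second branch of the argument.
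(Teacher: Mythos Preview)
Your argument is correct. The forward direction is immediate once you observe that a closed $A\subseteq X$ with $f(A)=\emptyset$ is closed in $X+_{f}Y$, and your FIP dichotomy for the converse is clean: either the $Y$-traces $B_{i}$ already have the FIP and compactness of $Y$ finishes, or a finite intersection $C$ of the $C_{i}$'s lands entirely in $X$ with $f(C)=\emptyset$, whence the hypothesis makes $C$ compact and the remaining intersections are handled inside $C$. The preliminary check that $X$ is an open subspace carrying its original topology is exactly what is needed to transport compactness back and forth.

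There is nothing to compare against here: the paper does not prove this proposition but merely quotes it as Proposition~2.12 of \cite{So}. Your self-contained proof is a valid substitute for that citation.
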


\begin{defi}Let $X+_{f}W$, $Y$ and $Z$ be topological spaces, $\pi: Y \rightarrow X$ and $\varpi: Z \rightarrow W$ be two maps. We define the pullback of $f$ with respect to $\pi$ and $\varpi$ by $f^{\ast}(A) = Cl_{Z}(\varpi^{-1}(f(Cl_{X}\pi(A))))$.
\end{defi}

\begin{prop}\label{univproppullback} (Propositions 2.23 and 2.24 of \cite{So}) Let $X+_{f}W$, $Y$ and $Z$ be topological spaces, $\pi: Y \rightarrow X$ and $\varpi: Z \rightarrow W$ be two continuous maps. Then $\pi+\varpi: Y+_{f^{\ast}}Z \rightarrow X+_{f}W$ is continuous. Moreover, if we have $Y+_{f'}Z$, for some admissible map $f'$, such that $\pi+\varpi: Y+_{f'}Z \rightarrow X+_{f}W$ is continuous, then, $id_{Y}+id_{Z}: Y+_{f'}Z \rightarrow Y+_{f^{\ast}}Z$ is continuous.
\end{prop}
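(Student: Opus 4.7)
My plan is to treat the two assertions separately, in both cases reducing continuity to a containment of the form ``$f(\text{closed set}) \subseteq \text{closed set}$'' using the explicit description of closed sets in an Artin-Wraith glueing.

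Before addressing the proposition I would briefly check that $f^{\ast}$ is itself admissible, so that $Y+_{f^{\ast}}Z$ is defined. This is routine: $f^{\ast}(\emptyset)=\emptyset$, and the equalities $\pi(A\cup B)=\pi(A)\cup\pi(B)$, $Cl_{X}(C\cup D)=Cl_{X}C\cup Cl_{X}D$, additivity of $f$, preservation of unions by $\varpi^{-1}$, and additivity of $Cl_{Z}$ together give $f^{\ast}(A\cup B)=f^{\ast}(A)\cup f^{\ast}(B)$. From admissibility one also extracts monotonicity ($A\subseteq B\Rightarrow f^{\ast}(A)\subseteq f^{\ast}(B)$), which I will use freely below.

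For the first assertion I would take a closed set $C\subseteq X+_{f}W$, so $C\cap X\in Closed(X)$, $C\cap W\in Closed(W)$ and $f(C\cap X)\subseteq C\cap W$, and show that $D:=(\pi+\varpi)^{-1}(C)$ is closed in $Y+_{f^{\ast}}Z$. The intersections $D\cap Y=\pi^{-1}(C\cap X)$ and $D\cap Z=\varpi^{-1}(C\cap W)$ are closed by continuity of $\pi$ and $\varpi$. It remains to verify $f^{\ast}(D\cap Y)\subseteq D\cap Z$. Since $\pi(\pi^{-1}(C\cap X))\subseteq C\cap X$ and $C\cap X$ is closed, $Cl_{X}\pi(D\cap Y)\subseteq C\cap X$; by monotonicity of $f$ we get $f(Cl_{X}\pi(D\cap Y))\subseteq f(C\cap X)\subseteq C\cap W$, hence $\varpi^{-1}(f(Cl_{X}\pi(D\cap Y)))\subseteq \varpi^{-1}(C\cap W)=D\cap Z$, and closing in $Z$ keeps us inside the already-closed $D\cap Z$.

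For the second assertion I would invoke Proposition \ref{continuousidentity}: continuity of $id_{Y}+id_{Z}:Y+_{f'}Z\to Y+_{f^{\ast}}Z$ is equivalent to $f'(A)\subseteq f^{\ast}(A)$ for every $A\in Closed(Y)$. Fix such an $A$ and form $C:=Cl_{X}\pi(A)\cup f(Cl_{X}\pi(A))\subseteq X+_{f}W$; since $f(Cl_{X}\pi(A))\in Closed(W)$ and $f(C\cap X)=f(Cl_{X}\pi(A))\subseteq C$, this $C$ is closed in $X+_{f}W$. By hypothesis $D:=(\pi+\varpi)^{-1}(C)$ is closed in $Y+_{f'}Z$, so $f'(D\cap Y)\subseteq D\cap Z=\varpi^{-1}(f(Cl_{X}\pi(A)))$. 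Because $A\subseteq \pi^{-1}(Cl_{X}\pi(A))=D\cap Y$, monotonicity yields $f'(A)\subseteq \varpi^{-1}(f(Cl_{X}\pi(A)))$; as $f'(A)$ is already closed in $Z$, taking $Cl_{Z}$ on the right gives exactly $f'(A)\subseteq f^{\ast}(A)$.

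The only real subtlety is making sure one picks the correct test-closed-set in the second part: nothing smaller than the saturation $Cl_{X}\pi(A)\cup f(Cl_{X}\pi(A))$ will be closed in $X+_{f}W$, and nothing larger will force $f'(A)$ into the pullback. Once that set is chosen, the argument is a matter of tracing the definitions; the rest of the work is just bookkeeping of intersections with $X,Y,W,Z$ and careful use of monotonicity of admissible maps.
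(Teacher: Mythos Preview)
Your argument is correct. The paper does not supply its own proof of this proposition---it simply cites Propositions~2.23 and~2.24 of \cite{So}---so there is nothing to compare against beyond noting that your direct verification from the definitions (check admissibility of $f^{\ast}$, pull back a closed set for the first claim, and for the second claim test against the saturation $Cl_{X}\pi(A)\cup f(Cl_{X}\pi(A))$ to force $f'(A)\subseteq f^{\ast}(A)$ via Proposition~\ref{continuousidentity}) is exactly the natural route and is presumably what the cited reference does. One cosmetic remark: in the last line you do not actually need that $f'(A)$ is closed in $Z$; the containment $\varpi^{-1}(f(Cl_{X}\pi(A)))\subseteq Cl_{Z}\bigl(\varpi^{-1}(f(Cl_{X}\pi(A)))\bigr)=f^{\ast}(A)$ already gives $f'(A)\subseteq f^{\ast}(A)$ without any closure argument on the left.
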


\begin{prop}\label{compositionpullback} (Proposition 2.31 of \cite{So}) Let $X+_{f}W$, $Y$, $Z$, $U$ and $V$ be topological spaces, $\pi: Y \rightarrow X$, $\varpi: Z \rightarrow W$, $\rho: U \rightarrow Y$ and $\varrho: V \rightarrow Z$ be four maps. Then $f^{\ast\ast} \subseteq (f^{\ast})^{\ast}$, where $f^{\ast\ast}$ is the pullback of $f$ by the maps $\pi \circ \rho$ and $\varpi \circ \varrho$ and  $(f^{\ast})^{\ast}$ is the pullback of $f^{\ast}$ by the maps $\rho$ and $\varrho$.
\end{prop}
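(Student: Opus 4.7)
The plan is to unfold both sides by the definition of pullback and then verify the inclusion by chaining elementary monotonicity statements. Explicitly, for $A\in Closed(U)$ one gets
\[
f^{\ast\ast}(A) = Cl_V\bigl(\varrho^{-1}\bigl(\varpi^{-1}\bigl(f(Cl_X\pi(\rho(A)))\bigr)\bigr)\bigr),
\]
while
\[
(f^{\ast})^{\ast}(A) = Cl_V\bigl(\varrho^{-1}\bigl(Cl_Z\bigl(\varpi^{-1}\bigl(f(Cl_X\pi(Cl_Y\rho(A)))\bigr)\bigr)\bigr)\bigr),
\]
so the task reduces to comparing the two arguments of $Cl_V$.

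Before chasing the diagram, I would record two elementary facts. First, admissibility of $f$ forces monotonicity on closed sets: if $B\subseteq B'$ are closed in $X$, then $B\cup B'=B'$, hence $f(B')=f(B)\cup f(B')\supseteq f(B)$. Second, images, preimages, and topological closures all preserve inclusion, and every set is contained in its own closure. These are the only ingredients needed: each building block appearing in both expressions is monotone, so it suffices to compare the innermost pieces and then propagate.

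The propagation goes as follows. Since $\rho(A)\subseteq Cl_Y\rho(A)$, applying $\pi$, then $Cl_X$, then $f$, then $\varpi^{-1}$ produces
\[
\varpi^{-1}\bigl(f(Cl_X\pi(\rho(A)))\bigr)\subseteq \varpi^{-1}\bigl(f(Cl_X\pi(Cl_Y\rho(A)))\bigr).
\]
Inserting $Cl_Z$ around the right-hand side can only enlarge it further, and then applying the monotone operators $\varrho^{-1}$ and $Cl_V$ to both sides yields exactly $f^{\ast\ast}(A)\subseteq (f^{\ast})^{\ast}(A)$.

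I do not expect any real obstacle: the statement is essentially a bookkeeping lemma saying that performing one combined pullback gives no more than performing two consecutive pullbacks, because the intermediate closures $Cl_Y$ and $Cl_Z$ introduced in the iterated construction can only inflate the output. The same asymmetry explains why only inclusion, not equality, is claimed, and it is what I would flag as the one point deserving a line of commentary in the written proof.
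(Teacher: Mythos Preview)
Your argument is correct: unfolding the definition of pullback on both sides and then propagating the inclusions $\rho(A)\subseteq Cl_Y\rho(A)$ and $S\subseteq Cl_Z S$ through the monotone operators $\pi$, $Cl_X$, $f$, $\varpi^{-1}$, $\varrho^{-1}$, $Cl_V$ gives exactly the desired containment. The monotonicity of $f$ on closed sets via admissibility is the one nontrivial step, and you handle it correctly.

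Note that the present paper does not actually supply a proof of this proposition; it is quoted from \cite{So} (Proposition~2.31 there) as a preliminary fact. So there is no in-paper argument to compare against. That said, your direct monotonicity chase is the natural proof and is essentially what one finds in the cited source.
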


\begin{prop}\label{eightlemma}Let $X+_{f}W$, $Y$ and $Z$ be topological spaces, $\pi: Y \rightarrow X$ and $\varpi: Z \rightarrow W$ be two continuous maps. Let $f^{\ast}$ be the pullback of $f$ with respect to $\pi$ and $id_{W}$, $(f^{\ast})^{\ast}$ be the pullback of $f^{\ast}$ with respect to $id_{Y}$ and $\varpi$ and $f^{\ast\ast}$ the pullback of $f$ with respect to $\pi$ and $\varpi$. Similarly, let $f'$ be the pullback of $f$ with respect to $id_{X}$ and $\varpi$, $(f')'$ be the pullback of $f'$ with respect to $\pi$ and $id_{Z}$ and $f'' = f^{\ast\ast}$. Then all maps in the following diagram are continuous: 

$$ \xymatrix{  & Y+_{f^{\ast\ast}}Z \ar[r]_{id+id} & Y+_{(f^{\ast})^{\ast}}Z \ar[r]_{id+\varpi} & Y+_{f^{\ast}}W \ar[rd]^{\pi+id} &  \\
                Y+_{g}Z \ar[rrrr]^{\pi+\varpi} \ar[ru]^{id+id} \ar[rd]_{id+id}& & & & X+_{f}W   \\
              & Y+_{f''}Z \ar[r]_{id+id} & Y+_{(f')'}Z \ar[r]_{\pi+id} & X+_{f'}Z \ar[ru]_{id+\varpi} & } $$
\end{prop}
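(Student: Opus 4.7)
The plan is to check each arrow in the diagram individually, using exactly one of the three preceding propositions, and then to observe that continuity of the diagonal $\pi+\varpi$ is forced by commutativity. Reading the diagram I interpret $g$ as the common value $f^{**}=f''$, so that the two leftmost $id+id$ arrows out of $Y+_{g}Z$ are literally identity maps of the same space; this is the only part of the statement that requires parsing rather than proof.

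Along the top route I proceed in two steps. Applied to the pullback of $f$ along $(\pi, id_{W})$, Proposition \ref{univproppullback} gives continuity of $\pi+id_{W}\colon Y+_{f^{*}}W\to X+_{f}W$; applied to the pullback of $f^{*}$ along $(id_{Y},\varpi)$, it gives continuity of $id_{Y}+\varpi\colon Y+_{(f^{*})^{*}}Z\to Y+_{f^{*}}W$. For the remaining arrow $id+id\colon Y+_{f^{**}}Z\to Y+_{(f^{*})^{*}}Z$, I observe that the composite of the pairs $(\pi,id_{W})$ and $(id_{Y},\varpi)$ is $(\pi,\varpi)$, which is precisely the pair defining $f^{**}$; Proposition \ref{compositionpullback} then yields $f^{**}(A)\subseteq (f^{*})^{*}(A)$ for every closed $A\subseteq Y$, and Proposition \ref{continuousidentity} converts this containment into continuity of the identity.

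The bottom route is handled by a symmetric argument: Proposition \ref{univproppullback} makes $id_{X}+\varpi\colon X+_{f'}Z\to X+_{f}W$ and $\pi+id_{Z}\colon Y+_{(f')'}Z\to X+_{f'}Z$ continuous, and the composite of the pairs $(id_{X},\varpi)$ and $(\pi,id_{Z})$ is again $(\pi,\varpi)$, so Proposition \ref{compositionpullback} gives $f''=f^{**}\subseteq (f')'$ and Proposition \ref{continuousidentity} makes $id+id\colon Y+_{f''}Z\to Y+_{(f')'}Z$ continuous. The diagonal $\pi+\varpi\colon Y+_{g}Z\to X+_{f}W$ is then exhibited as the composition along either route.

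The main obstacle is purely bookkeeping: keeping straight which map plays the role of ``$\pi$'' and ``$\rho$'' (and respectively ``$\varpi$'' and ``$\varrho$'') when invoking Proposition \ref{compositionpullback}, so that both the top composition $(\pi,id_{W})\circ(id_{Y},\varpi)$ and the bottom composition $(id_{X},\varpi)\circ(\pi,id_{Z})$ reduce to the same pair $(\pi,\varpi)$ and therefore yield the same pullback $f^{**}=f''$. Once this identification is made, every arrow falls immediately to one of the three cited propositions and no new topological argument is needed.
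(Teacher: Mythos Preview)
Your handling of the arrows along both routes other than the leftmost $id+id$'s is correct and matches the paper's proof exactly: Proposition~\ref{univproppullback} for $\pi+id$, $id+\varpi$, $\pi+id$, $id+\varpi$, and Propositions~\ref{compositionpullback} plus~\ref{continuousidentity} for $id+id\colon Y+_{f^{**}}Z\to Y+_{(f^{*})^{*}}Z$ and its bottom analogue.

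The gap is in your reading of $g$. The paper does not intend $g=f^{**}$; rather, $g$ is an \emph{arbitrary} admissible map for which the diagonal $\pi+\varpi\colon Y+_{g}Z\to X+_{f}W$ is assumed continuous. This is admittedly left implicit in the statement, but it is forced by the proof (which invokes Proposition~\ref{univproppullback} for the arrow $id+id\colon Y+_{g}Z\to Y+_{f^{**}}Z$, a triviality under your reading) and, more importantly, by the applications. In Proposition~\ref{continuouscriteriaforisomorphism} and in the ``Changing maps'' proposition, one starts from a given compactification $Y+_{g}Z'$ together with a continuous extension $\pi+\psi$, and uses Proposition~\ref{eightlemma} to conclude that $id+\psi\colon Y+_{g}Z'\to Y+_{f^{*}}W$ is continuous; this requires $g$ to be generic.

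Under the intended reading, the diagonal is a hypothesis rather than a conclusion, and the missing step is the second (``moreover'') clause of Proposition~\ref{univproppullback}: since $\pi+\varpi\colon Y+_{g}Z\to X+_{f}W$ is continuous and $f^{**}$ is the pullback of $f$ along $(\pi,\varpi)$, the identity $id+id\colon Y+_{g}Z\to Y+_{f^{**}}Z$ is continuous. Your argument covers the special case $g=f^{**}$, which is not sufficient for the later uses.
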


\begin{obs}In particular, we use later that the map $id+\varpi: Y+_{g}Z \rightarrow Y+_{f^{\ast}}W$ is continuous.
\end{obs}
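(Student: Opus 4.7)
The proof is essentially a diagram chase in which every arrow is identified as an instance of one of three already-established constructions, so the plan is to walk around the diagram and invoke the appropriate previously proved proposition at each step. The three building blocks are: Proposition \ref{univproppullback}, which produces the continuity of a $\pi+id$ or $id+\varpi$ arrow whenever the target admissible map has been pulled back via exactly that map; Proposition \ref{compositionpullback}, which gives the containment $f^{\ast\ast}\subseteq (f^{\ast})^{\ast}$ between the ``double'' pullback and the ``composed'' pullback; and Proposition \ref{continuousidentity}, which converts containments of admissible maps into continuity of identity-on-underlying-set arrows. Once these three tools are in hand the proof becomes mechanical.

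For the top row, I would first apply Proposition \ref{univproppullback} to the pair $(\pi, id_W)$ to produce the rightmost arrow $\pi+id_W : Y+_{f^{\ast}}W\to X+_f W$, then apply it again to $(id_Y,\varpi)$ over the base space $Y+_{f^{\ast}}W$ to produce $id_Y+\varpi : Y+_{(f^{\ast})^{\ast}}Z \to Y+_{f^{\ast}}W$. For the remaining identity arrow of the top row I would invoke Proposition \ref{compositionpullback} with $\pi$ and $id_W$ as the outer maps and $id_Y$ and $\varpi$ as the inner maps, obtaining $f^{\ast\ast}(A)\subseteq (f^{\ast})^{\ast}(A)$ for every closed $A\subseteq Y$; Proposition \ref{continuousidentity} then yields continuity of $id+id : Y+_{f^{\ast\ast}}Z \to Y+_{(f^{\ast})^{\ast}}Z$. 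The bottom row is handled in exactly the same way with the roles of $\pi$ and $\varpi$ swapped, producing continuity of $\pi+id_Z$, $id_X+\varpi$, and the analogous $id+id$ arrow obtained from $f''(A)\subseteq (f')'(A)$ (recall $f''=f^{\ast\ast}$).

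The two remaining arrows issuing from $Y+_gZ$ I would dispatch via the ``moreover'' clause of Proposition \ref{univproppullback}: the standing hypothesis that $\pi+\varpi : Y+_gZ\to X+_fW$ is continuous makes $g$ an admissible map through which $\pi+\varpi$ factors, and the universal property then forces $id+id : Y+_gZ \to Y+_{f^{\ast\ast}}Z$ to be continuous (and likewise into $Y+_{f''}Z$, since $f''=f^{\ast\ast}$). Commutativity of the whole diagram is automatic because every arrow agrees with $\pi+\varpi$ on underlying points. The only real obstacle I anticipate is bookkeeping, namely keeping straight which of the similar-looking symbols $f^{\ast}$, $(f^{\ast})^{\ast}$, $f^{\ast\ast}$ and their primed counterparts lives on which pair among $(Y,W)$, $(Y,Z)$, $(X,Z)$, and verifying that each invocation of Proposition \ref{univproppullback} or Proposition \ref{compositionpullback} is made with the correct choice of maps; no genuinely new idea is required beyond the propositions already in the preliminaries.
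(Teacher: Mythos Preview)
Your proposal is correct and follows essentially the same route as the paper: the paper's proof also handles the top row by applying Proposition \ref{univproppullback} three times (once for $\pi+id$, once for $id+\varpi$, and once via the ``moreover'' clause for the leftmost $id+id$ out of $Y+_{g}Z$), then combines Proposition \ref{compositionpullback} with Proposition \ref{continuousidentity} for the middle $id+id$, and declares the bottom row analogous. Your write-up is slightly more explicit about which maps play which roles in each invocation, but the argument is the same.
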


\begin{proof}By \textbf{Proposition \ref{univproppullback}}, the maps $\pi+id: Y+_{f^{\ast}}W \rightarrow X+_{f}W$, $id+\varpi: Y+_{(f^{\ast})^{\ast}}Z \rightarrow  Y+_{f^{\ast}}W$ and $id+id: Y+_{g}Z \rightarrow Y+_{f^{\ast\ast}}Z$ are continuous. By \textbf{Propositions \ref{compositionpullback}} and \textbf{\ref{continuousidentity}}, the map $id+id: Y+_{f^{\ast\ast}}Z \rightarrow Y+_{(f^{\ast)^{\ast}}}Z$ is continuous.

The other side of the diagram is analogous.
\end{proof}

\subsection{Coarse geometry}

The next definitions and propositions follow John Roe's book \cite{Ro}.

\begin{defi}Let $X$ be a set. A coarse structure on $X$ is a set $\varepsilon \subseteq X \times X$ satisfying:

\begin{enumerate}
    \item The diagonal $\Delta X \in \varepsilon$,
    \item If $e \in \varepsilon$ and $e' \subseteq e$, then $e' \in \varepsilon$,
    \item If $e,e' \in \varepsilon$ then $e\cup e' \in \varepsilon$,
    \item If $e \in \varepsilon$  then $e^{-1} = \{(a,b): (b,a) \in e\} \in \varepsilon$,
    \item If $e,e' \in \varepsilon$  then $e' \circ e = \{(a,b): \exists c \in X: (a,c) \in e, (c,b) \in e'\} \in \varepsilon$.
\end{enumerate}

\end{defi}

\begin{prop}(Proposition 2.12 of \cite{Ro}) Let $X$ be a set and $\{\varepsilon_{\gamma}\}_{\gamma \in \Gamma}$ a set of coarse structures for $X$. Then $\varepsilon = \bigcap_{\gamma \in \Gamma}\varepsilon_{\gamma}$ is a coarse structure for $X$.
\end{prop}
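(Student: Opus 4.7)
The plan is to verify each of the five axioms of a coarse structure for $\varepsilon = \bigcap_{\gamma \in \Gamma}\varepsilon_{\gamma}$ by invoking the corresponding axiom in each $\varepsilon_{\gamma}$ separately. All five axioms have the form ``given members of the structure, a certain constructed subset of $X \times X$ is again in the structure,'' and such closure conditions are always preserved under arbitrary intersections indexed by $\Gamma$, so in principle nothing subtle happens.

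Concretely, axiom (1) holds because $\Delta X \in \varepsilon_{\gamma}$ for every $\gamma$, so $\Delta X$ lies in the intersection. For axioms (2)--(5), the argument is uniform: if $e, e' \in \varepsilon$, then in particular $e, e' \in \varepsilon_{\gamma}$ for every $\gamma$; applying the relevant axiom inside each $\varepsilon_{\gamma}$ produces, respectively, $e' \in \varepsilon_{\gamma}$ whenever $e' \subseteq e$, then $e \cup e' \in \varepsilon_{\gamma}$, then $e^{-1} \in \varepsilon_{\gamma}$, and finally $e' \circ e \in \varepsilon_{\gamma}$; since each holds for every $\gamma$, the constructed set lies in $\varepsilon$. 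The key observation making this trivial is that the operations $\cup$, $(\cdot)^{-1}$, and $\circ$ are defined in terms of $X$ alone and do not depend on $\gamma$, so no compatibility condition between the different $\varepsilon_{\gamma}$ is required.

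I expect no real obstacle here; the result is a formal consequence of the axioms being closure conditions, of exactly the same flavor as the familiar statements that an intersection of subgroups is a subgroup, or that an intersection of topologies (viewed through their closed sets) is a topology.
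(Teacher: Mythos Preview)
Your proposal is correct and is the standard verification. The paper itself does not supply a proof of this proposition; it merely cites Roe's book (Proposition~2.12 of \cite{Ro}) and states the result without argument, so there is nothing further to compare.
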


\begin{defi}Let $X$ be a set and $\mathcal{A} \subseteq \mathcal{P}(X \times X)$. The coarse structure generated by $\mathcal{A}$ is the intersection of all coarse structures that contains $\mathcal{A}$.
\end{defi}

\begin{obs}The maximal coarse structure contains $\mathcal{A}$, so this intersection is not empty.
\end{obs}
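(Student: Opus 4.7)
The plan is to exhibit a single coarse structure that contains $\mathcal{A}$, which will show that the family over which the intersection in the preceding definition is taken is non-empty. The natural candidate is the full power set $\mathcal{P}(X \times X)$; I would call this the \emph{maximal coarse structure} on $X$ and check that it satisfies the five axioms.

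First, I would verify directly that $\varepsilon_{\max} = \mathcal{P}(X \times X)$ is a coarse structure. Axiom (1) holds because $\Delta X \subseteq X \times X$, so $\Delta X \in \mathcal{P}(X \times X)$. Axioms (2) and (3) are immediate, since $\mathcal{P}(X \times X)$ is closed under taking subsets and under finite (indeed arbitrary) unions. For axioms (4) and (5), note that if $e, e' \subseteq X \times X$, then by construction $e^{-1} \subseteq X \times X$ and $e' \circ e \subseteq X \times X$, hence both lie in $\mathcal{P}(X \times X)$.

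Second, since $\mathcal{A}$ is by hypothesis a subset of $\mathcal{P}(X \times X)$, every element of $\mathcal{A}$ is an element of $\varepsilon_{\max}$; that is, $\mathcal{A} \subseteq \varepsilon_{\max}$. Therefore the family of coarse structures containing $\mathcal{A}$ is non-empty, and the intersection defining the coarse structure generated by $\mathcal{A}$ is well defined. By the preceding proposition, this intersection is itself a coarse structure, and it is by construction the smallest one containing $\mathcal{A}$.

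There is no serious obstacle in this argument; the remark is essentially a sanity check that the definition of the coarse structure generated by $\mathcal{A}$ is not vacuous. The only subtle point worth flagging is the word \emph{maximal}: it refers to the maximum (under inclusion) in the collection of all coarse structures on $X$, and the verification above both proves that this maximum exists and identifies it with $\mathcal{P}(X \times X)$.
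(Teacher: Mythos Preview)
Your proposal is correct and follows exactly the approach the paper intends: the remark in the paper is stated without further proof, simply asserting that the maximal coarse structure contains $\mathcal{A}$, and your verification that $\varepsilon_{\max} = \mathcal{P}(X\times X)$ satisfies the five axioms and contains $\mathcal{A}$ is precisely the content behind that assertion.
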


\begin{prop}Let $X$ be a set, $\mathcal{A} \subseteq \mathcal{P}(X \times X)$ and $\varepsilon$ the coarse structure generated by $\mathcal{A}$. Suppose that $\mathcal{A}$ satisfies the following conditions:

\begin{enumerate}
    \item $\exists e \in \mathcal{A}$ such that $\Delta X \subseteq e$,
    \item If $e,e' \in \mathcal{A}$ then there exists $e'' \in \mathcal{A}$ such that $e\cup e' \subseteq e''$,
    \item If $e \in \mathcal{A}$  then $e^{-1} \in \mathcal{A}$,
    \item If $e,e' \in \mathcal{A}$  then $e' \circ e \in \mathcal{A}$.
\end{enumerate}

Then for every $e \in \varepsilon$, there exists $e' \in \mathcal{A}$ such that $e \subseteq e'$.
\end{prop}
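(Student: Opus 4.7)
The plan is to show that the downward closure of $\mathcal{A}$ is already a coarse structure, so it must contain $\varepsilon$. Concretely, define
\[
\mathcal{A}' = \{\, e \subseteq X \times X : \exists\, e' \in \mathcal{A} \text{ with } e \subseteq e' \,\}.
\]
Clearly $\mathcal{A} \subseteq \mathcal{A}'$, so by the minimality of $\varepsilon$ it suffices to verify that $\mathcal{A}'$ satisfies the five axioms of a coarse structure; once this is done, $\varepsilon \subseteq \mathcal{A}'$ is exactly the conclusion we want.

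The verification is a routine check, matching the five axioms one-to-one with the four hypotheses. First, $\Delta X \in \mathcal{A}'$ comes directly from hypothesis (1). Second, $\mathcal{A}'$ is downward closed by its very definition, giving axiom (2). For axiom (3), given $e_1, e_2 \in \mathcal{A}'$ with witnesses $e_1 \subseteq a_1, e_2 \subseteq a_2$ in $\mathcal{A}$, apply hypothesis (2) to find $a'' \in \mathcal{A}$ containing $a_1 \cup a_2$; then $e_1 \cup e_2 \subseteq a''$, so $e_1 \cup e_2 \in \mathcal{A}'$. For axiom (4), if $e \subseteq a$ with $a \in \mathcal{A}$, then $e^{-1} \subseteq a^{-1}$ and hypothesis (3) gives $a^{-1} \in \mathcal{A}$. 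Finally, axiom (5) follows because if $e_1 \subseteq a_1, e_2 \subseteq a_2$ with $a_1, a_2 \in \mathcal{A}$, then $e_2 \circ e_1 \subseteq a_2 \circ a_1 \in \mathcal{A}$ by hypothesis (4).

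There is no real obstacle here; the only point requiring a slight pause is axiom (3), where one must use hypothesis (2) in the form of containment (the hypothesis does not ask that $\mathcal{A}$ itself be closed under finite unions, only that unions be dominated by members of $\mathcal{A}$), which is exactly the flexibility that $\mathcal{A}'$ provides. Since $\mathcal{A}'$ is a coarse structure containing $\mathcal{A}$ and $\varepsilon$ is the smallest such, we get $\varepsilon \subseteq \mathcal{A}'$, i.e.\ every $e \in \varepsilon$ is contained in some $e' \in \mathcal{A}$. \eod
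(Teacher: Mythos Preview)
Your proof is correct and follows essentially the same approach as the paper: define the downward closure $\mathcal{A}'$ of $\mathcal{A}$ (the paper calls it $\varepsilon'$), verify the five coarse-structure axioms for it using the four hypotheses, and conclude by minimality of $\varepsilon$. The only cosmetic difference is that the paper also observes $\varepsilon' \subseteq \varepsilon$ to obtain equality $\varepsilon = \varepsilon'$, whereas you stop at $\varepsilon \subseteq \mathcal{A}'$, which is all that the statement requires.
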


\begin{proof}Let $\varepsilon' = \{e \subseteq X\times X: \exists e' \in \mathcal{A}: e \subseteq e'\}$. By the definition, this is a subset of $\varepsilon$. By the first condition, $\Delta X \in \varepsilon'$. 

Let $e \in \varepsilon$ and $e' \subseteq e$. There exists $e'' \in \mathcal{A}$ such that $e' \subseteq e \subseteq e''$, which implies that $e' \in \varepsilon'$.

Let $e,e' \in \varepsilon'$. There exists $e'',e''' \in \mathcal{A}$ such that $e \subseteq e''$ and $e' \subseteq e'''$. Then $e\cup e' \subseteq e'' \cup e'''  \subseteq e^{4}$, for some $e^{4} \in \mathcal{A}$, which implies that $e \cup e' \in \varepsilon'$. We have also that $e\circ e' \subseteq e'' \circ e''' \in  \mathcal{A}$, which implies $e\circ e' \in \varepsilon$, 

Let $e\in \varepsilon'$. There exists $e' \in \mathcal{A}$ such that $e \subseteq e'$. Then $e^{-1} \subseteq e'^{-1} \in \mathcal{A}$, which implies that $e^{-1} \in \varepsilon'$.

So $\varepsilon'$ is a coarse structure that contains $\mathcal{A}$, which implies that $\varepsilon \subseteq \varepsilon'$ and then $\varepsilon = \varepsilon'$.
\end{proof}

In this case we say that $\mathcal{A}$ is a basis for the coarse structure $\varepsilon$.

\begin{prop}(Proposition 2.16 of \cite{Ro}) Let $B$ be a subset of a coarse space $(X,\varepsilon)$. Then $B\times B \in \varepsilon$ if and only if there exists $b \in X$ such that $B\times \{b\} \in \varepsilon$.
\end{prop}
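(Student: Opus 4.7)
The plan is to derive both implications directly from the coarse structure axioms, with no auxiliary constructions needed.

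For the forward direction, if $B\times B \in \varepsilon$ and $B \neq \emptyset$, I would pick any $b \in B$ and apply axiom (2) to the inclusion $B\times\{b\} \subseteq B\times B$ to conclude $B\times\{b\} \in \varepsilon$. The edge case $B = \emptyset$ is trivial, since $\emptyset \subseteq \Delta X \in \varepsilon$ and any $b \in X$ works (assuming $X$ is nonempty, which is implicit in having a coarse space).

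The converse is the substantive half and is where the inverse and composition axioms enter. Setting $e := B\times\{b\}$, axiom (4) gives $e^{-1} = \{b\}\times B \in \varepsilon$, and axiom (5) then gives $e^{-1}\circ e \in \varepsilon$. The key calculation is the identity $e^{-1}\circ e = B\times B$: a pair $(a,c)$ lies in $e^{-1}\circ e$ iff there exists $d \in X$ with $(a,d) \in e$ and $(d,c) \in e^{-1}$, and these two conditions force $a \in B$, $d = b$, and $c \in B$; conversely, for any $a,c \in B$ the choice $d = b$ witnesses $(a,c) \in e^{-1}\circ e$.

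There is no real obstacle here, since the statement is a formal consequence of the axioms. The only two points to keep straight are the empty case for $B$ and the order of composition: one must form $e^{-1}\circ e$ (not $e\circ e^{-1}$), because only in this order is the middle element forced to be $b$ in a way that lets the outer coordinates range freely over $B$; the opposite composition would instead collapse to $\{b\}\times\{b\}$.
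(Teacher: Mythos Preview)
Your argument is correct and is the standard proof of this fact. Note, however, that the paper does not supply its own proof of this proposition: it is stated with a citation to Roe's book (Proposition~2.16 of \cite{Ro}) and no argument is given in the paper itself, so there is nothing in the paper to compare your approach against.
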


When we have those equivalent conditions on $B$, we say that $B$ is a bounded subset of $X$.

\begin{defi}Let $X$ be a topological space. A subset of $X$ is topologically bounded (or relatively compact) if its closure on $X$ is compact. We say that $(X,\varepsilon)$ is a proper coarse space if the coarse structure has a neighbourhood of $\Delta X$ and every bounded subset of $X$ is topologically bounded.
\end{defi}

\begin{defi}A coarse space $(X,\varepsilon)$ is coarsely connected if for every $(x,y) \in X \times X$, $\exists e \in \varepsilon:$ $(x,y) \in e$.
\end{defi}

\begin{defi}Let $X$ be a set, $Y \subseteq X$ and $u \subseteq X \times X$. We define the $u$-neighbourhood of $Y$ by $\mathfrak{B}(Y,u) = \{x \in X: \exists y \in Y: \ (x,y)\in u\}$. Let $X$ be a topological space. A subset $e \subseteq X\times X$ is proper if $\forall B \subseteq X$ topologically bounded, $\B(B,e)$ and $\B(B, e^{-1})$ are topologically bounded.
\end{defi}

\begin{defi}Let $(M,d)$ be a metric space. The bounded coarse structure associated to the metric $d$ is the collection of sets $e \subseteq X\times X$ such that $\sup\{d(x,y): (x,y)\in e\} < \infty$. We denote this coarse structure by $\varepsilon_{d}$.
\end{defi}

\begin{prop}\label{propercontrolled}(Proposition 2.23 of \cite{Ro}) Let $(X,\varepsilon)$ be a coarsely connected proper coarse space. A subset of $X$ is bounded if and only if it is topologically bounded. Moreover, every element of $\varepsilon$ is proper.
\end{prop}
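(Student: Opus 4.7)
My plan is to split the proposition into three independent pieces: one direction of the equivalence is immediate from the definition of ``proper coarse space,'' the reverse direction requires a genuine argument using coarse connectedness and the neighborhood-of-the-diagonal hypothesis, and the second claim (every $e \in \varepsilon$ is proper) follows by a short manipulation once the equivalence is in hand.

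For the easy direction, ``bounded $\Rightarrow$ topologically bounded'' is simply part of the definition of a proper coarse space, so nothing needs to be done. The content is in showing that a topologically bounded set $B$ is bounded. The strategy is to exhibit a controlled entourage containing $B \times B$. By properness there is some $U \in \varepsilon$ that is a neighborhood of $\Delta X$. For each $x \in \overline{B}$, the product topology gives an open $W_x \ni x$ with $W_x \times W_x \subseteq U$, hence $W_x \times W_x \in \varepsilon$ by closure under subsets. Since $\overline{B}$ is compact, I extract a finite subcover $W_{x_1}, \dots, W_{x_n}$. Coarse connectedness supplies, for each pair $(i,j)$, some $e_{ij} \in \varepsilon$ containing $(x_i, x_j)$. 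The key estimate is the factorization
$$W_{x_i} \times W_{x_j} \subseteq (W_{x_j} \times W_{x_j}) \circ e_{ij} \circ (W_{x_i} \times W_{x_i}),$$
obtained by routing any $(a,b)$ with $a \in W_{x_i}$, $b \in W_{x_j}$ through the intermediate points $x_i$ and $x_j$. Since $\varepsilon$ is closed under composition and finite unions, the finite union $\bigcup_{i,j} W_{x_i} \times W_{x_j} \supseteq \overline{B} \times \overline{B} \supseteq B \times B$ lies in $\varepsilon$, so $B$ is bounded.

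For the second assertion, fix $e \in \varepsilon$ and a topologically bounded $B$. If $B = \emptyset$ there is nothing to prove; otherwise pick $b_0 \in B$. By the first part, $B \times B \in \varepsilon$, in particular $B \times \{b_0\} \in \varepsilon$. A direct computation using the paper's convention for composition shows
$$\mathfrak{B}(B,e) \times \{b_0\} \subseteq (B \times \{b_0\}) \circ e \in \varepsilon,$$
because any $x \in \mathfrak{B}(B,e)$ comes with some $b \in B$ satisfying $(x,b) \in e$ and $(b,b_0) \in B \times \{b_0\}$. Hence $\mathfrak{B}(B,e)$ is bounded, and by the equivalence just proved it is topologically bounded. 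The same argument with $e^{-1}$ in place of $e$ handles the other neighborhood, so $e$ is proper.

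The main obstacle I anticipate is purely bookkeeping: getting the order of factors correct in the triple composition $(W_{x_j} \times W_{x_j}) \circ e_{ij} \circ (W_{x_i} \times W_{x_i})$ according to the paper's convention $e' \circ e = \{(a,b) : \exists c,\ (a,c)\in e,\ (c,b)\in e'\}$, and similarly for the one-sided composition used in the properness argument. Everything else is a standard compactness-plus-entourage argument, and the coarse connectedness hypothesis is used in exactly one place, namely to supply the bridging entourages $e_{ij}$ between the finitely many chosen base points.
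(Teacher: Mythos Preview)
The paper does not supply its own proof of this statement; it is stated with a citation to Proposition~2.23 of Roe's \textit{Lectures on Coarse Geometry} and no argument is given. Your proof is correct and is essentially the standard one found in Roe: for the nontrivial direction of the equivalence you use the open controlled neighbourhood of the diagonal together with compactness of $\overline{B}$ to produce finitely many small boxes, then coarse connectedness to bridge them via the triple composition, and for the ``every controlled set is proper'' claim you reduce to the equivalence just established via the inclusion $\mathfrak{B}(B,e)\times\{b_{0}\}\subseteq (B\times\{b_{0}\})\circ e$. Your bookkeeping with the paper's composition convention is also correct.
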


\begin{defi}Let $f: (X,\varepsilon) \rightarrow (Y,\zeta)$ be a map. We say that $f$ is bornologous if $\forall e \in \varepsilon$, $f(e)\in \zeta$. We say that $f$ is proper if $\forall B \subseteq Y$ bounded, $f^{-1}(B)$ is bounded. We say that $f$ is coarse if it is proper and bornologous.
\end{defi}

\begin{prop}\label{coarsecomposition}Let $f: (X,\varepsilon) \rightarrow (Y,\zeta)$. If $e,e' \in \varepsilon$ and $f(e),f(e') \in \zeta$, then $f(e' \circ e) \in \zeta$.
\end{prop}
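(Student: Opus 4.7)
The plan is to show the set-theoretic inclusion $f(e'\circ e)\subseteq f(e')\circ f(e)$, after which the conclusion is immediate from the axioms of a coarse structure.

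First I would unpack the definitions. A pair $(y_1,y_2)\in f(e'\circ e)$ means there exist $a,b\in X$ with $(a,b)\in e'\circ e$ and $f(a)=y_1$, $f(b)=y_2$. By definition of the composition, there is some $c\in X$ such that $(a,c)\in e$ and $(c,b)\in e'$. Then $(f(a),f(c))\in f(e)$ and $(f(c),f(b))\in f(e')$, so setting $z=f(c)\in Y$ gives $(y_1,z)\in f(e)$ and $(z,y_2)\in f(e')$, i.e., $(y_1,y_2)\in f(e')\circ f(e)$. This establishes the inclusion $f(e'\circ e)\subseteq f(e')\circ f(e)$.

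Now, since $f(e),f(e')\in\zeta$, axiom $(5)$ of a coarse structure gives $f(e')\circ f(e)\in\zeta$. Axiom $(2)$ (closure under subsets) then yields $f(e'\circ e)\in\zeta$, as desired.

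There is really no obstacle here: the statement is purely formal, and the only thing to be careful about is bookkeeping the order of composition (the convention in the paper has $e'\circ e$ meaning ``first $e$, then $e'$''), which is why the witness $z$ above comes from applying $f$ to the intermediate point $c$ and lands between $y_1$ and $y_2$ in $f(e')\circ f(e)$.
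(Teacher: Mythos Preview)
Your proof is correct and essentially identical to the paper's own proof: both establish the inclusion $f(e'\circ e)\subseteq f(e')\circ f(e)$ by picking a witness $c$ for the composition and applying $f$, then conclude using closure of $\zeta$ under composition and subsets. Your version is slightly more explicit about which coarse-structure axioms are invoked, but the argument is the same.
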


\begin{proof}Let $(a,b) \in e' \circ e$. Then $\exists c \in X:$ $(a,c) \in e$, $(c,b) \in e'$, which implies that $(f(a),f(c)) \in f(e)$, $(f(c),f(b)) \in f(e')$, which implies that $(f(a),f(b)) \in f(e') \circ f(e)$. Then $f(e' \circ e)  \subseteq f(e') \circ f(e) \in \zeta$, which implies that $f(e' \circ e) \in \zeta$.
\end{proof}

\begin{prop}\label{coarsebasis} (Proposition 1.19 of \cite{Gra}) Let $f: (X,\varepsilon) \rightarrow (Y,\zeta)$ and $\mathcal{A}$ a basis for $\varepsilon$. If $\forall e \in \mathcal{A}$, $f(e) \in \zeta$, then $f$ is bornologous.
\end{prop}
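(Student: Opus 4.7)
The plan is to reduce everything directly to the basis property established in the unnumbered proposition above (the one characterizing coarse structures generated by a basis). Concretely, given an arbitrary $e \in \varepsilon$, I want to produce an element of $\zeta$ containing $f(e)$, and then invoke closure of $\zeta$ under subsets.

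First, I would apply the proposition preceding the definition of basis: since $\mathcal{A}$ is a basis for $\varepsilon$, there exists $e' \in \mathcal{A}$ with $e \subseteq e'$. Monotonicity of the image gives $f(e) \subseteq f(e')$. By hypothesis $f(e') \in \zeta$, so axiom (2) in the definition of a coarse structure (closure under taking subsets) yields $f(e) \in \zeta$. Since $e \in \varepsilon$ was arbitrary, $f$ is bornologous.

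There is essentially no obstacle here; the content of the statement is packaged into the earlier proposition that produces, for each $e \in \varepsilon$, a dominating $e' \in \mathcal{A}$. The only thing to be careful about is that the word \emph{basis} is being used in the strong sense of that proposition (with the four closure-up-to-inclusion conditions on $\mathcal{A}$), so that the domination step is available; otherwise the statement would fail for an arbitrary generating set.
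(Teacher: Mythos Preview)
Your proof is correct. The paper does not actually supply its own proof of this proposition; it merely cites it as Proposition~1.19 of \cite{Gra}. Your argument is the natural one: use the characterization of a basis given in the preceding unnumbered proposition to dominate an arbitrary $e \in \varepsilon$ by some $e' \in \mathcal{A}$, then apply the hypothesis and the hereditary axiom of $\zeta$. Your closing caveat about the word \emph{basis} being used in the strong sense is also apt and matches exactly how the paper sets things up.
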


\begin{defi}Let $S$ be a set and $(X,\varepsilon)$ a coarse space. Two maps $f,g: S \rightarrow X$ are close if $\{(f(s),g(s)): s \in S\} \in \varepsilon$.
\end{defi}

\begin{defi}Let $(X,\varepsilon)$ be a coarse space and $A \subseteq X$. The subspace coarse structure is defined by $\varepsilon|_{A} = \{e \in \varepsilon: e \subseteq A\times A\}$.
\end{defi}

\begin{defi}Two coarse spaces $(X,\varepsilon)$ and $(Y,\zeta)$ are coarsely equivalent if there exists two coarse maps $f: X \rightarrow Y$ and $g: Y \rightarrow X$ that are quasi-inverses, i.e. $f\circ g$ is close to $id_{Y}$ and $g \circ f$ is close to $id_{X}$. A coarse map $f: (X,\varepsilon) \rightarrow (Y,\zeta)$ is a coarse embedding if it is a coarse equivalence between $X$ and $f(X)$, with the subspace coarse structure.
\end{defi}

\begin{prop}Let $(X,\varepsilon)$ be a coarse space and $A \subseteq X$. The inclusion map $\iota: A \rightarrow X$ is a coarse embedding. \eod
\end{prop}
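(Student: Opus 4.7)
The plan is to unwind the definitions: I need to show first that $\iota$ is a coarse map (bornologous and proper) and then that the corestriction $\iota: (A,\varepsilon|_A) \to (\iota(A), \varepsilon|_{\iota(A)})$ is a coarse equivalence. Since $\iota(A) = A$ and the induced subspace coarse structure on $\iota(A)$ coincides with $\varepsilon|_A$, the latter step is trivial: the corestricted map is literally $id_A$, which is its own quasi-inverse.

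For bornologousness, I would take any $e \in \varepsilon|_A$. By the very definition of the subspace coarse structure, $e \in \varepsilon$ and $e \subseteq A \times A$; since $\iota$ is an inclusion, $\iota(e) = e \in \varepsilon$. By \textbf{Proposition \ref{coarsebasis}} one could alternatively check this on a basis, but here the statement is immediate.

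For properness, I would use the characterisation of bounded sets via $B \times B \in \varepsilon$. Let $B \subseteq X$ be bounded, so $B \times B \in \varepsilon$. Then $\iota^{-1}(B) = A \cap B$, and
\[
(A \cap B) \times (A \cap B) \subseteq B \times B \in \varepsilon,
\]
hence $(A\cap B) \times (A \cap B) \in \varepsilon$ by the subset closure axiom of a coarse structure. Since also $(A\cap B) \times (A\cap B) \subseteq A \times A$, this element lies in $\varepsilon|_A$, so $A \cap B$ is bounded in $(A, \varepsilon|_A)$.

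There is essentially no obstacle: every step is a direct unwinding of definitions. The only mild subtlety is handling the empty intersection case in the properness check, which is vacuous, and recognising that ``coarse embedding'' applied to a genuine inclusion reduces the coarse equivalence condition to the identity map on $A$.
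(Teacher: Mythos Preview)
Your proof is correct. The paper itself gives no proof of this proposition (it is marked with the end-of-proof symbol immediately after the statement), treating it as an immediate consequence of the definitions; your unwinding is precisely the verification one would supply if pressed.
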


\begin{defi}Let $(X,\varepsilon)$ be a coarse space, $A \subseteq X$ and $e \in \varepsilon$. We say that $A$ is $e$-quasi-dense if $\B(A,e) = X$. We say that $A$ is quasi-dense if it is $e$-quasi-dense for some $e \in \varepsilon$.
\end{defi}

\begin{prop}Let $(X,\varepsilon)$ be a coarsely connected coarse space and $A \subseteq X$. If $A$ is quasi-dense, then the inclusion map $\iota: A \rightarrow X$ is a coarse equivalence.
\end{prop}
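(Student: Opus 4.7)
The plan is to build an explicit quasi-inverse $r: X \rightarrow A$ for $\iota$. Fix $e \in \varepsilon$ with $\B(A,e) = X$, and using the axiom of choice define $r$ by selecting, for each $x \in X \setminus A$, some $r(x) \in A$ with $(x, r(x)) \in e$; set $r(a) = a$ when $a \in A$. By construction $\{(x, r(x)) : x \in X\} \subseteq e \cup \Delta X \in \varepsilon$, so $\iota \circ r$ is close to $id_X$, and $r \circ \iota = id_A$ is trivially close to $id_A$.

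It remains to verify that $r$ is a coarse map from $(X, \varepsilon)$ to $(A, \varepsilon|_A)$. For bornologousness, given $e'' \in \varepsilon$, any pair in $r(e'')$ arises from some $(x, y) \in e''$, and chaining $(r(x), x) \in e^{-1} \cup \Delta X$, $(x, y) \in e''$, and $(y, r(y)) \in e \cup \Delta X$ shows $r(e'') \subseteq (e \cup \Delta X) \circ e'' \circ (e^{-1} \cup \Delta X)$; this lies in $\varepsilon$ by the coarse structure axioms and is contained in $A \times A$, hence in $\varepsilon|_A$. For properness, let $B \subseteq A$ be bounded. By Proposition 2.16 of \cite{Ro} (recalled above) there is $b \in X$ with $B \times \{b\} \in \varepsilon$; picking $a \in A$ with $(b, a) \in e$ gives $B \times \{a\} \subseteq e \circ (B \times \{b\}) \in \varepsilon$. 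For any $x \in r^{-1}(B)$, concatenating $(x, r(x)) \in e \cup \Delta X$ with $(r(x), a) \in B \times \{a\}$ yields $r^{-1}(B) \times \{a\} \in \varepsilon$, so $r^{-1}(B)$ is bounded.

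The only mildly delicate point is the properness step: a bounded subset of $A$ supplies a ``center'' that \emph{a priori} lives in the ambient $X$, and one must invoke quasi-density a second time to replace this center by some $a \in A$ before closing the composition. All remaining verifications are routine manipulations of coarse entourages using the axioms of a coarse structure.
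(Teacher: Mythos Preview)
Your proof is correct and follows essentially the same approach as the paper: build an explicit quasi-inverse $r$ by choosing, for each $x$, a nearby point of $A$, then verify closeness, bornologousness, and properness via routine entourage manipulations. One minor remark: the ``mildly delicate'' step you flag is in fact unnecessary, since $r^{-1}(B)$ only needs to be bounded in $X$ (not in $A$), so the original center $b \in X$ already suffices; the paper handles properness in one line by observing $r^{-1}(B) \subseteq \B(B,e^{-1})$ (in your convention, $\B(B,e)\cup B$), which is bounded.
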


\begin{proof}Let $e \in \varepsilon$ such that $A$ is $e$-quasi-dense. Let $f: X \rightarrow A$ that chooses $f(x)$ such that $(f(x),x) \in e$ (it is possible since $A$ is $e$-quasi-dense). We can also suppose that $f \circ \iota = id_{A}$.

Let $B \subseteq A$ be a bounded set. Then $f^{-1}(B) \subseteq \B(B,e^{-1})$, which is bounded since $X$ is coarsely connected. Then $f$ is proper.

Let $e' \in \varepsilon$. If $(a,b) \in e'$, then $(f(a),f(b)) \in e^{-1} \circ e' \circ e \in \varepsilon$, which implies that $f(e') \in \varepsilon$. Then $f$ is bornologous.

Let $x \in X$. Then $(\iota \circ f(x), x) = (f(x),x) \in e$, which implies that $\iota \circ f$ and $id_{X}$ are close. Since $f \circ \iota = id_{A}$, it follows that $\iota$ and $f$ are coarse inverses. Thus $\iota$ is a coarse equivalence.
\end{proof}

\begin{cor}\label{quasidense}Let $f: (X,\varepsilon) \rightarrow (Y,\zeta)$ be a coarse embedding, with $Y$ coarsely connected. If $f(X)$ is quasi-dense, then $f$ is a coarse equivalence. \eod
\end{cor}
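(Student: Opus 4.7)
The plan is to factor $f$ through its image and recognize each factor as a coarse equivalence. Write $\bar f : X \to f(X)$ for the corestriction of $f$ and $\iota : f(X) \to Y$ for the inclusion, so that $f = \iota \circ \bar f$. By the hypothesis that $f$ is a coarse embedding, $\bar f$ is a coarse equivalence between $(X,\varepsilon)$ and $(f(X),\zeta|_{f(X)})$, essentially by definition.

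Next I would apply the immediately preceding proposition to the subset $A = f(X) \subseteq Y$. Its hypotheses are exactly satisfied: $(Y,\zeta)$ is coarsely connected, and $f(X)$ is quasi-dense in $Y$ by assumption. That proposition then yields that the inclusion $\iota : f(X) \to Y$ is itself a coarse equivalence.

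Finally, I would invoke the standard fact that the composition of two coarse equivalences is a coarse equivalence. This is routine: compositions of proper and bornologous maps are proper and bornologous, and if $g_1, h_1$ and $g_2, h_2$ are pairs of coarse quasi-inverses, then $h_1 \circ h_2$ is a coarse quasi-inverse of $g_2 \circ g_1$, because closeness is preserved under composition with bornologous maps on either side. Applying this to $\bar f$ and $\iota$ gives that $f = \iota \circ \bar f : X \to Y$ is a coarse equivalence, which is exactly the conclusion. I expect no real obstacle here; the entire content of the corollary is packaging the previous proposition with the definition of coarse embedding.
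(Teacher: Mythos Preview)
Your proof is correct and is precisely the argument the paper has in mind: the corollary is marked with \eod\ and given no proof because it follows immediately from the preceding proposition (the inclusion of a quasi-dense subset into a coarsely connected space is a coarse equivalence) together with the paper's definition of coarse embedding. Your factorization $f = \iota \circ \bar f$ and the observation that compositions of coarse equivalences are coarse equivalences is exactly the intended unpacking.
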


\begin{prop}(Theorem 2.27 of \cite{Ro}) Let $X$ be a locally compact paracompact space, $W$ a compactification of $X$ (i.e. $W$ is Hausdorff compact, contains $X$ as a subspace and $X$ is dense in $W$), $\partial X = W - X$ and $e \subseteq X \times X$. The following conditions are equivalent:

\begin{enumerate}
    \item $Cl_{W^{2}}(e) \cap (W^{2} - X^{2}) \subseteq \Delta \partial X$.
    \item $e$ is proper and if  $\{(x_{\gamma},y_{\gamma})\}_{\gamma \in \Gamma}$ is a net contained in $e$ such that $\lim x_{\gamma} = x \in \partial X$, then $\lim y_{\gamma} = x$.
    \item $e$ is proper and $\forall x \in \partial X$, $\forall V$ neighbourhood of $x$ in $W$, there exists $U$ a neighbourhood of $x$ such that $U \subseteq V$ and $e \cap (U \times (X-V)) = \emptyset$.
\end{enumerate}

We say that $e$ is perspective if it satisfies these equivalent definitions. We denote by $\varepsilon_{W}$ the set of perspective subsets of $X \times X$. Then $(X,\varepsilon_{W})$ is a coarsely connected proper coarse space.
\end{prop}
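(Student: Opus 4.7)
My plan is to establish the equivalence $(1) \Leftrightarrow (2) \Leftrightarrow (3)$ by proving the cycle $(1) \Rightarrow (2) \Rightarrow (3) \Rightarrow (1)$, and then to verify the five coarse-structure axioms along with coarse connectedness and both halves of properness.

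For $(1) \Rightarrow (2)$, properness of $e$ is by contradiction: if $B$ is topologically bounded but $\B(B, e)$ is not, a witnessing net $(x_\gamma, y_\gamma) \in e$ admits a subnet with $x_\gamma \to x \in \partial X$ and $y_\gamma \to y \in Cl_W(B) \subseteq X$ by compactness of $W$, producing a point of $Cl_{W^2}(e) \cap (W^2 - X^2)$ outside $\Delta \partial X$; a symmetric argument handles $\B(B, e^{-1})$. The convergence assertion then follows because every subnet of $(y_\gamma)$ has a further convergent subnet in the compact space $W$ whose limit must equal $x$ by (1). The implication $(2) \Rightarrow (3)$ is a direct contradiction argument: the failure of (3) produces a net in $e$ whose first coordinate tends to $x$ but whose second coordinate stays in the closed set $W - V$, violating (2).

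For $(3) \Rightarrow (1)$, I would case-split on $(x, y) \in Cl_{W^2}(e) \cap (W^2 - X^2)$. The case $x, y \in \partial X$ with $x \neq y$ uses condition (3) at $x$ with disjoint open neighborhoods $V_x, V_y$: the resulting $U \subseteq V_x$ with $e \cap (U \times (X - V_x)) = \emptyset$ contradicts any net in $e$ approaching $(x, y)$, which must eventually sit in $U \times (V_y \cap X) \subseteq U \times (X - V_x)$. The mixed cases $x \in X, y \in \partial X$ and the symmetric one are ruled out by properness of $e$: a compact neighborhood $K \subseteq X$ of the $X$-coordinate traps the opposite coordinate in the topologically bounded set $\B(K, e)$ or $\B(K, e^{-1})$, incompatible with convergence to a boundary point.

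Given the equivalences, the coarse-structure axioms are direct: $\Delta X \in \varepsilon_W$ because $Cl_{W^2}(\Delta X) = \Delta W$ by density of $X$ and Hausdorffness of $W$; closure under subsets, finite unions, and inverses follows from the corresponding properties of topological closure and topologically bounded sets; composition is handled via (2), since $(x_\gamma, z_\gamma) \in e' \circ e$ with $x_\gamma \to x \in \partial X$ admits intermediate $y_\gamma$ with $y_\gamma \to x$ by (2) for $e$ and hence $z_\gamma \to x$ by (2) for $e'$, while the inclusion $\B(B, e' \circ e) \subseteq \B(\B(B, e'), e)$ yields properness. Coarse connectedness is immediate since any finite subset of $X \times X$ lies in $\varepsilon_W$; and if $B \times B \in \varepsilon_W$ with $b \in B$, then $B = \B(\{b\}, B \times B)$ is topologically bounded by properness of the entourage $B \times B$. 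The hardest step will be exhibiting a neighborhood of $\Delta X$ that lies in $\varepsilon_W$: the naive candidate $U \cap X^2$ for $U$ a uniform entourage of $W$ fails, since its $W^2$-closure picks up pairs $(p, x)$ with $p \in \partial X$ and $x \in X$ distinct. For metrizable $W$, the explicit choice $e = \{(x, y) \in X^2 : d(x, y) < d(x, \partial X)/2\}$ works, because any limit $(x, y) \in W^2 - X^2$ of points of $e$ satisfies $d(x, y) \leq d(x, \partial X)/2$, forcing $x \in \partial X$ and then $d(x, y) = 0$; in the general case one patches such local shrinkage around each boundary point using the uniformity and paracompactness of $W$.
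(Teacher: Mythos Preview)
The paper does not supply a proof of this proposition: it is quoted verbatim as Theorem~2.27 of Roe's \emph{Lectures on Coarse Geometry} and used as a black box in the preliminaries. So there is no argument in the paper to compare your proposal against.

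That said, your outline is essentially a correct reconstruction of the standard proof. The cycle $(1)\Rightarrow(2)\Rightarrow(3)\Rightarrow(1)$ is handled cleanly, and your verification of the coarse-structure axioms, coarse connectedness, and the implication ``bounded $\Rightarrow$ topologically bounded'' is sound (the last via properness of the entourage $B\times B$, which is built into conditions~(2) and~(3)). One small remark: in $(3)\Rightarrow(1)$ you tacitly use that $X$ is open in $W$, so that a point $x\in X$ admits a compact neighbourhood inside $X$; this holds because $X$ is locally compact and $W$ is Hausdorff, but it is worth making explicit.

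The only place where your write-up is genuinely incomplete is the existence of an open controlled neighbourhood of $\Delta X$. Your metrizable formula $e=\{(x,y):d(x,y)<d(x,\partial X)/2\}$ is fine, but ``patching local shrinkage using the uniformity and paracompactness of $W$'' is too vague to count as a proof in the general case. Roe's argument (and the standard one) instead uses that a locally compact paracompact Hausdorff space is a disjoint union of $\sigma$-compact pieces; on each piece one writes $X=\bigcup_n K_n$ with $K_n\subseteq\operatorname{int}K_{n+1}$ and takes a ``staircase'' neighbourhood such as $\bigcup_n(\operatorname{int}K_{n+1}\setminus K_{n-1})^2$, which is open, contains $\Delta X$, and is easily checked to satisfy condition~(3). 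If you want a self-contained proof, you should fill in this step explicitly rather than gesture at it.
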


\subsection{Coarse geometric perspectivity}

\begin{defi}\label{comp}Let $X$ be a locally compact Hausdorff space. Let $Comp(X)$ be the category whose objects are compact spaces of the form $X+_{f}W$, where $W$ is a Hausdorff compact space and morphisms are continuous maps that are the identity on $X$.
\end{defi}

\begin{prop}\label{pullbackdecompactificacoes} (Proposition 3.13 of \cite{So}) Let $X, Y$ be locally compact Hausdorff spaces and $\pi: X \rightarrow Y$ a proper map. Then, the map $\Pi: Comp(Y) \rightarrow Comp(X)$ such that $\Pi(Y+_{f}Z) = X+_{f^{\ast}}Z$ and, for $id+\varpi: Y+_{f}Z \rightarrow Y+_{g}W$, $\Pi(id+\varpi) = id+\varpi: X+_{f^{\ast}}Z \rightarrow X+_{g^{\ast}}W$ is a functor.
\end{prop}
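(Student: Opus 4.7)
The plan is to check three things in turn: that $\Pi(Y+_{f}Z) = X+_{f^{\ast}}Z$ is an object of $Comp(X)$, that $\Pi$ sends morphisms of $Comp(Y)$ to morphisms of $Comp(X)$, and that these assignments preserve identities and composition.

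For the object step, I would first verify compactness via \textbf{Proposition \ref{compactglue}}. Given a non-compact closed subset $A \subseteq X$, properness of $\pi$ between locally compact Hausdorff spaces forces $Cl_{Y}\pi(A)$ to be non-compact: otherwise $\pi^{-1}(Cl_{Y}\pi(A))$ would be compact by properness, and the closed subset $A$ of it would itself be compact. Compactness of $Y+_{f}Z$ then gives $f(Cl_{Y}\pi(A)) \neq \emptyset$, so $f^{\ast}(A) = Cl_{Z}(f(Cl_{Y}\pi(A))) \neq \emptyset$, and \textbf{Proposition \ref{compactglue}} yields that $X+_{f^{\ast}}Z$ is compact. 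For Hausdorffness I would use that $\pi+id_{Z}: X+_{f^{\ast}}Z \rightarrow Y+_{f}Z$ is continuous (\textbf{Proposition \ref{univproppullback}}), together with $X$ Hausdorff (separating pairs in $X$) and $Y+_{f}Z$ Hausdorff (separating pairs in $Z$, and mixed pairs $(x,z)$ by picking disjoint neighbourhoods of $\pi(x)$ and $z$ in $Y+_{f}Z$, choosing the first to lie inside the open set $Y$, and pulling both back through $\pi+id_{Z}$).

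For the morphism step, given a continuous $id+\varpi: Y+_{f}Z \rightarrow Y+_{g}W$, I obtain continuity of $id+\varpi: X+_{f^{\ast}}Z \rightarrow X+_{g^{\ast}}W$ by assembling the pieces of \textbf{Proposition \ref{eightlemma}}. The composition $(id+\varpi) \circ (\pi+id_{Z}) = \pi+\varpi: X+_{f^{\ast}}Z \rightarrow Y+_{g}W$ is continuous. Applying \textbf{Proposition \ref{univproppullback}} to this continuous map gives continuity of the identity $X+_{f^{\ast}}Z \rightarrow X+_{g^{\ast\ast}}Z$, where $g^{\ast\ast}$ is the pullback of $g$ along $\pi$ and $\varpi$. \textbf{Proposition \ref{compositionpullback}} then gives $g^{\ast\ast} \subseteq (g^{\ast})^{\ast}$ (the latter being the pullback of $g^{\ast}$ along $id_{X}$ and $\varpi$), and \textbf{Proposition \ref{continuousidentity}} upgrades this containment to continuity of the identity $X+_{g^{\ast\ast}}Z \rightarrow X+_{(g^{\ast})^{\ast}}Z$. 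Finally, \textbf{Proposition \ref{univproppullback}} applied with $id_{X}$ and $\varpi$ gives continuity of $id+\varpi: X+_{(g^{\ast})^{\ast}}Z \rightarrow X+_{g^{\ast}}W$. The composite of these three continuous maps is the desired $id+\varpi: X+_{f^{\ast}}Z \rightarrow X+_{g^{\ast}}W$.

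Functoriality is then formal: $\Pi$ sends $id_{Y+_{f}Z}$ to $id_{X+_{f^{\ast}}Z}$ by construction, and for composable $id+\varpi_{1}, id+\varpi_{2}$ one has $\Pi((id+\varpi_{2})\circ(id+\varpi_{1})) = \Pi(id+(\varpi_{2}\varpi_{1})) = id+(\varpi_{2}\varpi_{1}) = \Pi(id+\varpi_{2})\circ\Pi(id+\varpi_{1})$ as set-theoretic maps, with continuity of each factor supplied by the morphism step. The main obstacle I anticipate is precisely that morphism step: coordinating the three successive pullback constructions so that \textbf{Propositions \ref{univproppullback}}, \textbf{\ref{compositionpullback}}, and \textbf{\ref{continuousidentity}} chain together to give continuity of $id+\varpi$ on pulled-back gluings. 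This is exactly the structure encoded in the diagram of \textbf{Proposition \ref{eightlemma}}, which is why that lemma is stated beforehand; the object and functoriality steps are comparatively routine given properness of $\pi$ and the Hausdorffness of $Y+_{f}Z$.
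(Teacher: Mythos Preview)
The paper does not supply its own proof of this proposition; it is quoted as Proposition~3.13 of \cite{So}. So there is no in-paper argument to compare against, and I assess your proposal on its own.

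Your compactness check via \textbf{Proposition~\ref{compactglue}} and the formal functoriality verification are correct. The Hausdorffness step, however, is misplaced. By \textbf{Definition~\ref{comp}}, an object of $Comp(X)$ is a compact space $X+_{f}W$ with $W$ Hausdorff compact; the total space is \emph{not} required to be Hausdorff. The remark immediately following the proposition says exactly this: the functor may carry Hausdorff spaces to non-Hausdorff ones. So that paragraph should be dropped rather than proved.

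For the morphism step your chain through \textbf{Propositions~\ref{univproppullback}}, \textbf{\ref{compositionpullback}} and \textbf{\ref{eightlemma}} is valid, but note that each of those statements is formulated for \emph{continuous} $\pi$, whereas the later use of this functor in the paper (the second case of \textbf{Proposition~\ref{Teorema1}}) does not assume $\pi$ continuous. A direct argument bypasses the issue: continuity of $id+\varpi: Y+_{f}Z \to Y+_{g}W$ is equivalent (by the same computation that underlies \textbf{Proposition~\ref{continuousidentity}}) to $\varpi(f(A')) \subseteq g(A')$ for every closed $A' \subseteq Y$. Specializing to $A' = Cl_{Y}\pi(A)$ gives $\varpi(f^{\ast}(A)) = \varpi\big(f(Cl_{Y}\pi(A))\big) \subseteq g(Cl_{Y}\pi(A)) = g^{\ast}(A)$, which is precisely the criterion for $id+\varpi: X+_{f^{\ast}}Z \to X+_{g^{\ast}}W$ to be continuous. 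This needs only that $\varpi$ is continuous, which follows from $id+\varpi$ being a morphism in $Comp(Y)$.
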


\begin{obs}It is possible that this functor sends Hausdorff spaces to non Hausdorff spaces. It all depends on the behavior of the map $\pi$. However, the next proposition says that this is not a problem for coarse geometry.
\end{obs}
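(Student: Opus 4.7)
The Remark asserts that the functor $\Pi$ of Proposition \ref{pullbackdecompactificacoes} can send a Hausdorff object $Y +_f Z$ to a non-Hausdorff object $X +_{f^{\ast}} Z$, with the outcome depending on the behavior of $\pi$. To justify this formally I would construct an explicit counterexample. First I would isolate where Hausdorffness can actually fail: since $X$ and $Z$ each retain their original (Hausdorff) topologies as subspaces of the glueing, the only obstruction can be in separating a cross-pair $(x,z) \in X \times Z$ or two distinct points $z_1, z_2 \in Z$. From the closed-set description of the glueing, this reduces to finding a closed $A \subseteq X$ with $x$ in its interior and $z \notin f^{\ast}(A) = Cl_Z(f(Cl_Y \pi(A)))$, or a closed cover $X = A_1 \cup A_2$ with $z_i \notin f^{\ast}(A_i)$.

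Second, I would design a proper $\pi: X \to Y$ with enough pathology to break these conditions. The key observation is that $f^{\ast}$ applies $f$ to $Cl_Y \pi(A)$ rather than to $\pi(A)$: if $\pi$ is proper but not closed as a map, this closure can be strictly larger than $\pi(A)$ and introduce boundary points into $f^{\ast}(A)$ that obstruct separation. Natural candidates therefore are proper but non-closed $\pi$, or $\pi$ whose fibers produce uncontrolled accumulation in $Y$ near regions where $f$ is nontrivial. The final step is verification: check that $Y+_f Z$ is Hausdorff, that $\pi$ is proper in the intended sense, and then exhibit two explicit points in $X+_{f^{\ast}}Z$ admitting no disjoint neighborhoods.

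The main obstacle is that under the customary topological reading of ``proper'' (continuous with compact preimages), any $\pi$ between locally compact Hausdorff spaces is automatically closed, so $Cl_Y \pi(A) = \pi(A)$ and $f^{\ast}(A) = f(\pi(A))$. Moreover, $Y+_f Z$ being Hausdorff forces $f$ to vanish on every compact $K \subseteq Y$, because such $K$ is compact in $Y+_f Z$, hence closed, and the closed-set condition then demands $f(K) \subseteq \emptyset$. Under these circumstances one separates $(x,z)$ using a compact neighborhood of $x$, and separates $(z_1,z_2)$ by pulling back via $\pi$ any $Y+_f Z$-separation of $z_1, z_2$. Consequently a genuine counterexample requires exploiting subtle non-regularity of $\pi$ beyond the standard hypotheses, and indeed the very point of the following proposition is that imposing coarse-geometric regularity on $\pi$ reinstates the preservation of Hausdorffness.
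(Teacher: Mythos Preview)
The paper offers no proof or example for this Remark; it is stated as a bare observation. So there is no argument in the paper to compare against, and your task was really to supply the missing justification.

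Your analysis is careful and the diagnostic work is correct: you isolate the only possible failure modes (separating an $X$--$Z$ pair or a $Z$--$Z$ pair), you identify the mechanism (the closure $Cl_Y \pi(A)$ may strictly enlarge $\pi(A)$), and in your third paragraph you even prove the positive result that a \emph{continuous} proper $\pi$ always preserves Hausdorffness, via the continuous map $\pi+id_Z$ into the Hausdorff target. That argument is fine.

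The gap is that you stop there and treat the obstacle as essentially fatal, concluding only that a counterexample would need ``subtle non-regularity of $\pi$ beyond the standard hypotheses.'' In fact the resolution is not subtle at all: in the framework of \cite{So}, and in particular in the coarse-geometric setting that the Remark is pointing toward (see part~2 of Proposition~\ref{Teorema1}, where the coarse equivalence is \emph{not} assumed continuous), the word ``proper'' does not carry continuity. Once you drop continuity, an explicit example is immediate. Take $X=Y=[0,\infty)$, $Z=\{\ast\}$, $Y+_f Z=[0,\infty]$, and define $\pi(0)=0$, $\pi(x)=1/x$ for $0<x<1$, $\pi(x)=x$ for $x\ge 1$. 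Then $\pi^{-1}([0,M])\subseteq\{0\}\cup[1/M,M]$ is bounded, so $\pi$ is proper; but every neighbourhood of $0$ in $X$ has unbounded $\pi$-image, hence $f^{\ast}$ sends every closed neighbourhood of $0$ to $\{\ast\}$, and $0$ and $\ast$ cannot be separated in $X+_{f^{\ast}}Z$. You had all the pieces for this; what was missing was the recognition that ``proper'' here is the coarse/bornological notion, not the continuous one, so that non-continuity is the intended source of the phenomenon rather than an illicit move.
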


Let $X$ be a locally compact paracompact Hausdorff space and let $W = X+_{f}Y$ be a Hausdorff compactification of $X$. We denote by $\varepsilon_{f}$ the coarse structure on $X$ induced by $W$ (instead of $\varepsilon_{W}$).

\begin{defi}\label{coarseperspectivity} Let $X$ be a locally compact paracompact Hausdorff space, $\varepsilon$ be a coarsely connected proper coarse structure of $X$ and $X+_{f}W$ a Hausdorff compactification of $X$ . We say that $X+_{f}W$ is perspective if $\varepsilon_{f} \supseteq \varepsilon$. Let $Pers(\varepsilon)$ be the full subcategory of $Comp(X)$ whose objects are perspective compactifications of $(X,\varepsilon)$. Let $MPers(\varepsilon)$ be the full subcategory of $Pers(\varepsilon)$ such that the objects are metrizable spaces.
\end{defi}

\begin{obs}Such compactifications are called coarse in \cite{Ro}.
\end{obs}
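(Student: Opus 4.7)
The final statement is the unnumbered \begin{obs}\end{obs} recording that the compactifications introduced in Definition \ref{coarseperspectivity} under the name ``perspective'' are what Roe calls ``coarse'' in \cite{Ro}. This is a terminological remark rather than a mathematical claim: it fixes vocabulary relative to an external reference and has no hypothesis/conclusion structure to discharge. Accordingly, no proof is needed, and one should expect the author to present none.

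If one nevertheless wanted to document the identification, the work is purely bibliographic and consists of two short checks. First, locate Roe's definition of a coarse compactification in \cite{Ro}: a Hausdorff compactification $W$ of a locally compact paracompact space $X$ equipped with a coarse structure $\varepsilon$, such that every $e \in \varepsilon$ satisfies $\mathrm{Cl}_{W^{2}}(e) \cap (W^{2} - X^{2}) \subseteq \Delta \partial X$. Second, compare this with Definition \ref{coarseperspectivity}, which asks that $\varepsilon_{f} \supseteq \varepsilon$, where $\varepsilon_{f} = \varepsilon_{W}$ is the coarse structure of perspective subsets of $X \times X$ defined by Theorem 2.27 of \cite{Ro} quoted earlier in the excerpt. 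The condition ``every $e \in \varepsilon$ lies in $\varepsilon_{W}$'' is literally the same as ``every $e \in \varepsilon$ satisfies condition (1) of that theorem,'' so the two definitions coincide term for term.

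The only substantive point, and the reason the author bothers with the remark at all, is the change of name: the word ``coarse'' would collide with ``coarse structure,'' ``coarse map,'' ``coarse equivalence,'' and especially with the ``coarse geometric vs.\ group theoretic'' dichotomy that organizes the introduction. Renaming Roe's notion to ``perspective'' keeps the language unambiguous throughout the paper. This is a notational decision, not a theorem, so there is nothing further to prove.
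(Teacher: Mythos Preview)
Your assessment is correct: this is a terminological remark with no mathematical content to prove, and the paper accordingly offers no proof. Your optional bibliographic justification and the rationale for the name change are accurate and appropriate, though the paper itself does not spell them out.
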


\begin{prop}\label{Teorema1} (Theorems 3.32 and 3.37 of \cite{So}) Let $(X,\varepsilon)$ and $(Y, \zeta)$ be locally compact paracompact Hausdorff space with coarsely connected proper coarse structures and $\pi: Y \rightarrow X$ a coarse equivalence with quasi-inverse $\varpi$. Consider the pullback functors $\Pi$ and $\Lambda$, with respect to $\pi$ and $\varpi$, respectively. Then we have the following:

\begin{enumerate}
    \item If $\pi$ and $\varpi$ are continuous, then the restrictions of $\Pi$ and $\Lambda$  are isomorphisms between the categories $Pers(\varepsilon)$ and $Pers(\zeta)$ that are inverses to each other.
    \item If $X$ and $Y$ have countable basis, then the restrictions of $\Pi$ and $\Lambda$ are isomorphisms between $MPers(\varepsilon)$ and $MPers(\zeta)$ that are inverses to each other.
\end{enumerate}

\end{prop}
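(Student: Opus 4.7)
The plan is to show that $\Pi$ and $\Lambda$, already defined as functors on $Comp$ by \textbf{Proposition \ref{pullbackdecompactificacoes}}, restrict to mutually inverse functors between $Pers(\varepsilon)$ and $Pers(\zeta)$ (resp.\ their metrizable subcategories). Since morphisms in $Pers$ are continuous maps that fix the open part, the functorial action on morphisms is handled automatically once the object level is understood; what remains is (a) preservation of Hausdorffness, compactness, and perspectivity, (b) mutual inverseness on objects, and (c) in the metrizable setting, preservation of metrizability.

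For (a), fix $X+_{f}W\in Pers(\varepsilon)$ and let $Y+_{f^{\ast}}W=\Pi(X+_{f}W)$. Compactness follows from \textbf{Proposition \ref{compactglue}}: a non-compact closed $A\subseteq Y$ is unbounded by \textbf{Proposition \ref{propercontrolled}}, and since $\pi$ is coarsely proper it maps unbounded sets to unbounded sets, so $Cl_{X}\pi(A)$ is non-compact and $f(Cl_{X}\pi(A))\neq\emptyset$. For Hausdorffness, the main case is separating $y\in Y$ from $w\in W$: a compact neighbourhood $K\subseteq Y$ of $y$ has $\pi(K)$ compact, hence closed in the Hausdorff space $X+_{f}W$, forcing $f(\pi(K))=\emptyset$, so $f^{\ast}(K)=\emptyset$ and $K$ is closed in $Y+_{f^{\ast}}W$; two points of $W$ are separated by pulling disjoint opens back through the continuous map $\pi+id\colon Y+_{f^{\ast}}W\to X+_{f}W$ of \textbf{Proposition \ref{univproppullback}}. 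For perspectivity ($\zeta\subseteq\varepsilon_{f^{\ast}}$), take $e\in\zeta$ and a net $(y_{\alpha},y_{\alpha}')\in e$ with $y_{\alpha}\to w\in W$ in $Y+_{f^{\ast}}W$; applying $\pi+id$ yields $\pi(y_{\alpha})\to w$ in $X+_{f}W$, and perspectivity of $X+_{f}W$ applied to $\pi(e)\in\varepsilon\subseteq\varepsilon_{f}$ gives $\pi(y_{\alpha}')\to w$. A subnet argument in the compact space $Y+_{f^{\ast}}W$ then forces $y_{\alpha}'\to w$: a limit in $Y$ is ruled out by properness of $e$ together with $y_{\alpha}$ escaping all compacts, and a limit $w'\in W$ different from $w$ is ruled out by continuity of $\pi+id$ and Hausdorffness of $X+_{f}W$. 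This last step is the main obstacle, since it requires all three ingredients (compactness of the pullback, properness of $e$, and Hausdorffness downstairs) at once.

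For (b), $(\Lambda\circ\Pi)(X+_{f}W)=X+_{(f^{\ast})^{\ast}}W$ with $(f^{\ast})^{\ast}(A)=Cl_{W}(f(Cl_{X}\pi(Cl_{Y}\varpi(A))))$; continuity of $\pi$ upgrades the inclusion of \textbf{Proposition \ref{compositionpullback}} to the equality $(f^{\ast})^{\ast}(A)=Cl_{W}(f(Cl_{X}(\pi\varpi)(A)))$. Since $\pi\varpi$ is close to $id_{X}$, the set $D=\{(x,\pi\varpi(x)):x\in X\}$ lies in $\varepsilon\subseteq\varepsilon_{f}$ and is perspective in $X+_{f}W$. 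A direct net argument using perspectivity of $D$ gives $(f^{\ast})^{\ast}(A)=f(A)$ for every closed $A\subseteq X$: for $f(A)\subseteq(f^{\ast})^{\ast}(A)$, approximate $w\in f(A)$ by $x_{\alpha}\in A$ converging to $w$ in $X+_{f}W$ and swap each $x_{\alpha}$ with $\pi\varpi(x_{\alpha})$ along $D$, so $\pi\varpi(x_{\alpha})\to w$ also; the reverse inclusion is symmetric. \textbf{Proposition \ref{continuousidentity}} then makes the identity a homeomorphism $X+_{f}W\to X+_{(f^{\ast})^{\ast}}W$, so $\Lambda\circ\Pi=id$; the other composite is symmetric.

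For (c), a compact Hausdorff space is metrizable iff it is second countable, so it suffices to exhibit a countable basis of $Y+_{f^{\ast}}W$ when $Y$ and $X+_{f}W$ have countable bases. A countable basis of $Y$ gives sets open in $Y+_{f^{\ast}}W$, handling points of $Y$. For a boundary point $w\in W$ and any open neighbourhood $\mathcal{O}$ of $w$, the complement $K=(Y+_{f^{\ast}}W)\setminus\mathcal{O}$ is compact and $(\pi+id)(K)$ is compact in $X+_{f}W$ and avoids $w$; Hausdorffness of $X+_{f}W$ produces a basic open $U\ni w$ disjoint from $(\pi+id)(K)$, so $(\pi+id)^{-1}(U)\subseteq\mathcal{O}$. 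Thus $\{(\pi+id)^{-1}(U):U\in\mathcal{B}_{X+_{f}W}\}$ together with a countable basis of $Y$ is a countable basis of $Y+_{f^{\ast}}W$, and metrizability follows.
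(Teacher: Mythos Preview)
The paper does not prove this statement; it is quoted from \cite{So} (Theorems~3.32 and~3.37) without proof, so there is no in-paper argument to compare against. Your outline for part~(1) is sound: with $\pi$ and $\varpi$ continuous, the compactness, Hausdorffness, perspectivity, and mutual-inverse steps all go through essentially as you describe.

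The genuine gap is part~(2). Several of your steps rely on continuity of $\pi$, which is \emph{not} assumed there. You invoke the continuous map $\pi+id:Y+_{f^{\ast}}W\to X+_{f}W$ to separate two boundary points, to transport convergent nets in the perspectivity argument, and in~(c) both to pull back basic opens and to push forward the compact complement $K$; but \textbf{Proposition~\ref{univproppullback}} requires $\pi$ continuous to conclude that $\pi+id$ is continuous. In~(b) you also use continuity of $\pi$ to upgrade the inclusion $f^{\ast\ast}\subseteq(f^{\ast})^{\ast}$ of \textbf{Proposition~\ref{compositionpullback}} to an equality. (The minor step ``$\pi(K)$ compact'' can be rescued without continuity, since $\pi$ is bornologous and bounded equals relatively compact here; but the uses of $\pi+id$ cannot.) In part~(2) the second-countable and metrizability hypotheses are not a cosmetic add-on to part~(1): they must \emph{replace} continuity, for instance by letting one argue with sequences directly against the admissible map $f^{\ast}$ rather than through $\pi+id$. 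As written, what you have proved is part~(1) together with the implication ``part~(1) hypotheses plus second countability imply metrizability is preserved'', which is strictly weaker than part~(2) as stated.
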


\begin{obs}Our construction is the same as the one used on Theorem 7.1 of \cite{GM} for the case where the spaces $X$ and $Y$ are uniformly contractible ANR metric spaces. On the language that we are using,  Theorem 7.1 of \cite{GM} says that the functors $\Pi$ and $\Lambda$ sends controlled $\mathcal{Z}$-compactifications to controlled $\mathcal{Z}$-compactifications (controlled in their sense is equivalent to be, in our sense, perspective with respect to the bounded coarse structure).
\end{obs}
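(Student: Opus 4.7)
The plan is to verify the remark by breaking it into two independent assertions: (a) in the context of proper metric spaces, a compactification is \emph{controlled} in the sense of Guilbault--Moran if and only if it is perspective with respect to the bounded coarse structure $\varepsilon_{d}$, and (b) for a quasi-isometry $\pi:Y\to X$ between uniformly contractible ANR metric spaces, the compactification of $Y$ obtained via \cite{GM} coincides with the pullback compactification $Y+_{f^{\ast}}W$ used to define $\Pi$ in \textbf{Proposition \ref{Teorema1}}. Once (a) and (b) are in hand, the observation follows because both functors $\Pi$ and $\Lambda$ restrict to the same assignment on the subcategory of controlled $\mathcal{Z}$-compactifications.

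For part (a), I would unpack the Guilbault--Moran definition: a compactification $\widehat{X}=X\cup Z$ is controlled if for every open cover $\mathcal{U}$ of $\widehat{X}$ there is a compact $K\subseteq X$ such that every subset of $X\setminus K$ of $d$-diameter at most $1$ (equivalently, of any prescribed bounded diameter) lies in some element of $\mathcal{U}$. Translating into the language of entourages, this is exactly the statement that every $e\in\varepsilon_{d}$ satisfies condition (3) of the proposition characterizing $\varepsilon_{W}$, i.e.\ $e$ is perspective. Conversely, perspectivity of each bounded entourage immediately gives the Guilbault--Moran cover condition by taking $e=\{(x,y):d(x,y)\le \mathrm{diam}\,\mathcal{U}\}$. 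Hence $\varepsilon_{d}\subseteq\varepsilon_{f}$, which is the definition of being perspective with respect to $\varepsilon_{d}$.

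For part (b), the key is the universal property from \textbf{Proposition \ref{univproppullback}}: the pullback topology on $Y+_{f^{\ast}}W$ is the finest topology on $Y\dot\cup W$ in which $W$ keeps its topology, $Y$ is open, and $\pi+id_{W}:Y+_{f^{\ast}}W\to X+_{f}W$ is continuous. The construction in Theorem 7.1 of \cite{GM} declares a sequence $(y_{n})\subseteq Y$ to converge to $z\in Z$ exactly when $\pi(y_{n})\to z$ in $X+_{f}W$, which is the same universal requirement expressed sequentially. Since in the GM hypotheses $X$ and $Y$ are metrizable (hence first countable), the two topologies agree; moreover both constructions are canonical in $\pi$, so they determine the same functor on the appropriate subcategory.

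The main obstacle is the bookkeeping in part (a): one needs to know the precise form of ``controlled'' used in \cite{GM} (there are several variants in the $\mathcal{Z}$-compactification literature) and to verify that, under the properness of $(X,\varepsilon_{d})$ guaranteed by \textbf{Proposition \ref{propercontrolled}}, the bounded entourages of the metric really do generate the whole set of perspective entourages of the metric coarse structure. After that matching of definitions, part (b) is essentially formal from the universal property of the pullback, and no further work is required to establish the remark.
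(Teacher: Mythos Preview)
The paper states this as a remark (an \texttt{obs} environment) without any proof, so there is no argument in the paper to compare against; your proposal is an attempt to justify an assertion the author leaves to the reader. As such it is a reasonable outline, with one error and one point that needs more care.

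In part (b) you write that $Y+_{f^{\ast}}W$ carries the \emph{finest} topology making $\pi+id_{W}$ continuous. This is backwards: by \textbf{Proposition~\ref{continuousidentity}}, the map $id:Y+_{f'}W\to Y+_{f^{\ast}}W$ is continuous exactly when $f'\subseteq f^{\ast}$, and a smaller admissible map gives a finer gluing topology, so $f^{\ast}$ is the \emph{largest} admissible map and $Y+_{f^{\ast}}W$ is the \emph{coarsest} Artin--Wraith gluing for which $\pi+id_{W}$ is continuous. Fortunately this is still what you need: the Guilbault--Moran topology on $Y\cup W$ is the initial topology for $\pi+id_{W}$ (declared via sequential convergence, which suffices by metrizability), hence also the coarsest one, and the identification goes through once the wording is corrected.

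In part (a) the passage from the open-cover formulation of ``controlled'' to condition (3) of the perspectivity proposition is not the direct translation you suggest. The Guilbault--Moran condition produces, for each cover and each bound $R$, a single compact $K\subseteq X$ outside of which all $R$-diameter sets are $\mathcal{U}$-small; condition (3) is local at each boundary point. Matching the two requires a compactness argument on the boundary together with properness of bounded entourages (\textbf{Proposition~\ref{propercontrolled}}). This is routine but is where the actual work lies, and your sketch should make that step explicit rather than asserting it as immediate.
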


\begin{prop}\label{quotientpersp} (Proposition 3.48 of \cite{So}) Let $(X,\varepsilon)$ be a locally compact paracompact Hausdorff space with a proper coarsely connected coarse structure and $X+_{f}Y \in Pers(\varepsilon)$. If $X+_{g}Z$ is a compactification of $X$ and there exists a continuous map $id+\pi: X+_{f}Y \rightarrow X+_{g}Z$, then $X+_{g}Z \in Pers(\varepsilon)$.
\end{prop}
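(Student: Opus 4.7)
The plan is to verify the net characterization of perspectivity (condition 2 in the proposition just before \textbf{Definition \ref{comp}}) for every $e\in \varepsilon$ with respect to the compactification $X+_{g}Z$. Properness of $e$ as a subset of $X\times X$ depends only on the topology of $X$, not on the compactification, and in our situation it is automatic: since $(X,\varepsilon)$ is coarsely connected and proper, \textbf{Proposition \ref{propercontrolled}} guarantees that every element of $\varepsilon$ is proper. Thus the only real content is the net condition: given a net $\{(x_\gamma,y_\gamma)\}_{\gamma\in \Gamma}\subseteq e$ with $x_\gamma\to z\in Z$ in $X+_{g}Z$, I need to show $y_\gamma\to z$ in $X+_{g}Z$.

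The key idea is to lift such a net from $X+_{g}Z$ up to $X+_{f}Y$ using compactness, apply the perspectivity hypothesis there, and push the conclusion back through $id+\pi$. Since $X+_{g}Z$ is compact Hausdorff, to prove $y_\gamma\to z$ it suffices to show that every subnet of $\{y_\gamma\}$ has a further sub-subnet converging to $z$. Given such a subnet, the corresponding subnet of $\{x_\gamma\}$ in the compact space $X+_{f}Y$ admits a sub-subnet $x_{\gamma_\mu}\to w \in X+_{f}Y$. Because $x_{\gamma_\mu}\to z\in Z$ in $X+_{g}Z$ and $id+\pi$ is continuous and the identity on $X$, the limit $w$ cannot lie in $X$ (otherwise $z=w\in X$, contradiction), hence $w\in Y$ and $\pi(w)=z$. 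Now $X+_{f}Y\in Pers(\varepsilon)$ and $e\in \varepsilon$, so $e$ is perspective in $X+_{f}Y$; its net condition forces $y_{\gamma_\mu}\to w$ in $X+_{f}Y$. Applying $id+\pi$ once more yields $y_{\gamma_\mu}=(id+\pi)(y_{\gamma_\mu})\to (id+\pi)(w)=z$ in $X+_{g}Z$, which is exactly the sub-subnet statement needed.

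The main obstacle is precisely this subnet/compactness lift from $X+_{g}Z$ back to $X+_{f}Y$: one needs the lifted limit to land in the boundary $Y$ rather than in $X$, and this uses Hausdorffness of $X+_{g}Z$ together with continuity of $id+\pi$. Once that device is in place, both properness and the net condition reduce immediately to the corresponding facts already known for $X+_{f}Y$, so the write-up should be short.
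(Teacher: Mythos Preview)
The paper does not actually supply a proof of this proposition: it is quoted verbatim as Proposition~3.48 of \cite{So}, without argument. So there is no ``paper's own proof'' to compare against, and your submission must stand on its own.

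Your argument is correct. The subnet device you describe is the standard way to transfer the net characterisation of perspectivity through a continuous map between Hausdorff compacta, and each step checks out: properness of $e$ is indeed intrinsic to $X$ and follows from \textbf{Proposition~\ref{propercontrolled}}; compactness of $X+_{f}Y$ gives the lifted convergent sub-subnet; the observation that the limit $w$ must lie in $Y$ (else $z=(id+\pi)(w)=w\in X$) is exactly right and uses that $X+_{g}Z$ is Hausdorff; and the final push forward through $id+\pi$ is immediate. The reduction ``every subnet has a sub-subnet converging to $z$ $\Rightarrow$ the net converges to $z$'' is a general fact in arbitrary topological spaces, so no compactness of $X+_{g}Z$ is needed at that point (only Hausdorffness, to make limits unique).

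If you want a marginally shorter write-up, you can use characterisation~(1) instead of~(2): the map $(id+\pi)\times(id+\pi)$ is closed (compact domain, Hausdorff target), so it carries $Cl_{(X+_{f}Y)^{2}}(e)$ onto a closed set containing $e$ and hence containing $Cl_{(X+_{g}Z)^{2}}(e)$; any point of the latter outside $X^{2}$ then has a preimage outside $X^{2}$, which by hypothesis lies in $\Delta Y$ and hence maps into $\Delta Z$. This avoids nets entirely, but your version is equally valid.
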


\begin{prop}\label{closemapsequalpullbacks}  (Proposition 3.28 of \cite{So}) Let $(X,\varepsilon)$, $(Y,\eta)$ be locally compact paracompact Hausdorff spaces with proper coarsely connected coarse structures and $X+_{f}Z\in Pers(\varepsilon)$. If the coarse maps $\pi,\pi': Y \rightarrow X$ are close, then the pullback with respect to $\pi$ and $id_{Z}$ and the pullback with respect to $\pi'$ and $id_{Z}$ are equal. Moreover, the induced functors with respect to $\pi$ and $\pi'$ are the same.
\end{prop}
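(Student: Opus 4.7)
The plan is to show that the two pullbacks $f^{\ast}$ and $(f')^{\ast}$ coincide as admissible maps on $Closed(Y)$, from which equality of the induced functors follows formally.

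With $\varpi = id_Z$, the pullback definition reduces to $f^{\ast}(A) = Cl_Z(f(Cl_X \pi(A)))$ and $(f')^{\ast}(A) = Cl_Z(f(Cl_X \pi'(A)))$. Since $f$ is admissible, $f(Cl_X \pi(A))$ and $f(Cl_X \pi'(A))$ are already closed in $Z$, so the outer $Cl_Z$ is redundant, and it suffices to prove $f(Cl_X \pi(A)) = f(Cl_X \pi'(A))$ for every $A \in Closed(Y)$. The key hypothesis is that closeness of $\pi,\pi'$ gives $e := \{(\pi(y),\pi'(y)) : y \in Y\} \in \varepsilon$, and perspectivity of $X+_f Z$ gives $\varepsilon \subseteq \varepsilon_f$, so $e$ is a perspective subset of $X \times X$.

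I would then prove one inclusion directly. Fix $z \in f(Cl_X \pi(A))$; by the Artin-Wraith construction this means $z \in Cl_{X+_f Z}(Cl_X \pi(A)) \cap Z$. Since the trace on $X$ of any closed subset of $X+_f Z$ is closed in $X$, we have $Cl_X \pi(A) \subseteq Cl_{X+_f Z}(\pi(A))$, so by idempotence of closure, $z \in Cl_{X+_f Z}(\pi(A))$. Hence there is a net $(a_\beta) \subseteq A$ with $\pi(a_\beta) \to z$ in $X+_f Z$. The pairs $(\pi(a_\beta), \pi'(a_\beta))$ all lie in the perspective set $e$, so the net characterization of perspectivity (second condition of Proposition 1.27 of the excerpt) forces $\pi'(a_\beta) \to z$ as well. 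Therefore $z \in Cl_{X+_f Z}(\pi'(A)) \cap Z = f(Cl_X \pi'(A))$. Swapping $\pi$ and $\pi'$ yields the reverse inclusion and hence $f^{\ast} = (f')^{\ast}$.

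For the functorial statement, both induced functors send every $X+_f Z \in Pers(\varepsilon)$ to the common object $Y+_{f^{\ast}} Z$, and per Proposition \ref{pullbackdecompactificacoes} they act on morphisms by the same formula $id + \varpi$; hence they coincide as functors on $Pers(\varepsilon)$. The only conceptually non-routine step is the net argument invoking perspectivity; everything else is a routine manipulation of closures in Artin-Wraith glueings.
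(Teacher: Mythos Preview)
Your argument is correct. Note that the present paper does not actually supply a proof of this proposition---it is quoted verbatim from \cite{So} as Proposition~3.28 there---so there is no in-text proof to compare against.

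That said, your net argument is exactly the content of the companion statement Proposition~\ref{closesameboundary} (Proposition~3.27 of \cite{So}), which the paper lists immediately afterwards: for $A\in Closed(Y)$ one has $\pi(A)\subseteq \B(\pi'(A),e)$ and $\pi'(A)\subseteq \B(\pi(A),e^{-1})$ with $e=\{(\pi(y),\pi'(y)):y\in Y\}\in\varepsilon$, and that proposition yields $Cl_{X+_{f}Z}(\pi(A))\cap Z = Cl_{X+_{f}Z}(\pi'(A))\cap Z$, i.e.\ $f(Cl_{X}\pi(A))=f(Cl_{X}\pi'(A))$. So your proof is the expected one, with Proposition~\ref{closesameboundary} unpacked inline via the net characterization of perspective sets. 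One trivial remark: the characterization you invoke is the proposition at the very end of \S1.2 (Theorem~2.27 of \cite{Ro}); in the paper's numbering it is not 1.27.
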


\begin{prop}\label{closesameboundary}  (Proposition 3.27 of \cite{So}) Let $X+_{f}W \in Pers(\varepsilon)$ and $A,B \subseteq X$. If there exists an element $e \in  \varepsilon$ such that $A \subseteq \B(B,\varepsilon)$, then $Cl_{X+_{f}W}(A) - X \subseteq Cl_{X+_{f}W}(B) - X$. \eod
\end{prop}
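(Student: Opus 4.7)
My plan is to unpack the hypothesis, produce a net in $A$ converging to a boundary point, and then use the perspective property of $e$ to transfer convergence to a companion net in $B$.

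First, I would fix a point $x \in Cl_{X+_{f}W}(A) - X$, so $x$ lies in the boundary $W$, and choose a net $\{a_{\gamma}\}_{\gamma \in \Gamma} \subseteq A$ with $a_{\gamma} \to x$ in $X+_{f}W$. Since $A \subseteq \B(B,e)$, for each index $\gamma$ I can pick $b_{\gamma} \in B$ such that $(a_{\gamma}, b_{\gamma}) \in e$. This produces a net $\{(a_{\gamma}, b_{\gamma})\}_{\gamma \in \Gamma}$ wholly contained in $e$.

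The key observation is that $e \in \varepsilon \subseteq \varepsilon_{f}$ because $X+_{f}W$ is perspective, so $e$ itself is a perspective subset of $X \times X$ in the sense of the theorem cited from Roe's book. Applying the second equivalent characterization of perspectivity (net version) to the net $\{(a_{\gamma}, b_{\gamma})\}$ in $e$, the fact that $a_{\gamma} \to x \in \partial X$ forces $b_{\gamma} \to x$ as well. Since $b_{\gamma} \in B$ for every $\gamma$, this yields $x \in Cl_{X+_{f}W}(B)$, and since $x \in W$, in fact $x \in Cl_{X+_{f}W}(B) - X$.

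There is no real obstacle here; the argument is essentially a direct translation of the perspective condition into a closure statement. The only small subtlety I would double-check is whether one should work with nets or filters: since $X+_{f}W$ is only assumed Hausdorff compact and not metrizable, I use nets throughout (as permitted by the cited characterization), and the axiom of choice is implicit when picking $b_{\gamma}$ for each $a_{\gamma}$.
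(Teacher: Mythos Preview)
Your argument is correct and is the natural, direct proof: pick a net in $A$ converging to a boundary point, use $A \subseteq \B(B,e)$ to produce a companion net in $B$, and invoke the net characterization of perspectivity (condition 2 of the cited result from Roe) together with $e \in \varepsilon \subseteq \varepsilon_{f}$ to conclude. Note that the paper itself does not supply a proof here---the proposition is marked with the end-of-proof symbol and merely quoted from \cite{So}---so there is no in-paper argument to compare against; your proof is exactly the expected one.
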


\begin{obs}In particular, there is this well known phenomenon that if $X$ is a proper metric space and $A$ and $B$ have finite Hausdorff distance, then $Cl_{X+_{f}W}(A) - X = Cl_{X+_{f}W}(B) - X$.
\end{obs}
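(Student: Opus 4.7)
The plan is to exploit directly the second characterization of perspective sets from the proposition just before \textbf{Definition \ref{comp}}: namely that $e \in \varepsilon_{f}$ if and only if whenever $\{(x_{\gamma}, y_{\gamma})\}$ is a net in $e$ with $x_{\gamma} \to x \in \partial X$ in $X+_{f}W$, also $y_{\gamma} \to x$. Since $X+_{f}W \in Pers(\varepsilon)$ means $\varepsilon \subseteq \varepsilon_{f}$, the given $e \in \varepsilon$ enjoys this property.

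First I would pick an arbitrary $x \in Cl_{X+_{f}W}(A) - X$, and choose a net $(a_{\gamma})_{\gamma \in \Gamma}$ in $A$ with $a_{\gamma} \to x$ in $X+_{f}W$; such a net exists because $W = X +_{f} W$ is compact Hausdorff, hence nets suffice to detect closure. Using the hypothesis $A \subseteq \mathfrak{B}(B, e)$, for each $\gamma$ pick $b_{\gamma} \in B$ with $(a_{\gamma}, b_{\gamma}) \in e$ (this uses the axiom of choice but nothing more). Then the net $\{(a_{\gamma}, b_{\gamma})\}_{\gamma \in \Gamma}$ sits inside $e$ and its first coordinate converges to the boundary point $x$. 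Perspectivity of $e$ forces $b_{\gamma} \to x$ as well, and since $(b_{\gamma}) \subseteq B$, this shows $x \in Cl_{X+_{f}W}(B)$. Because $x \in \partial X$, we obtain $x \in Cl_{X+_{f}W}(B) - X$, proving the inclusion.

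There is essentially no obstacle: the whole argument is a direct unpacking of definitions once perspectivity is invoked in its ``net'' form. The only thing to double-check is the availability of nets (guaranteed by the compact Hausdorff topology on $X+_{f}W$) and the fact that every $e \in \varepsilon$ is genuinely perspective, which is exactly the content of $X+_{f}W \in Pers(\varepsilon)$. Thus the proof will be a short one-paragraph argument with no further machinery from the preliminaries beyond the equivalent formulation of perspectivity.
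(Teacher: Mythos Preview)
Your argument is correct and in fact supplies a direct proof of Proposition~\ref{closesameboundary} itself (the one-sided inclusion), which the paper merely quotes from \cite{So} without proof. In the paper the Remark is not argued separately: it is recorded as an immediate consequence of that proposition, since if the Hausdorff distance between $A$ and $B$ is at most $R$ in a proper metric space, then both $A \subseteq \B(B, e_R)$ and $B \subseteq \B(A, e_R)$ hold for the controlled set $e_R = \{(x,y): d(x,y) \le R\} \in \varepsilon_d$, and one invokes the proposition in each direction to obtain equality.

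Two small points you should make explicit so that you are proving the Remark rather than the Proposition. First, you work from the hypothesis $A \subseteq \B(B,e)$, which is the hypothesis of Proposition~\ref{closesameboundary}; you should say in one line why finite Hausdorff distance produces such an $e \in \varepsilon_d$ (the observation just above). Second, you only establish the inclusion $Cl_{X+_{f}W}(A) - X \subseteq Cl_{X+_{f}W}(B) - X$; the Remark claims equality, which follows by running your net argument again with the roles of $A$ and $B$ swapped, using $B \subseteq \B(A,e_R)$. Both are one-line additions, and with them your net-based approach is a clean, self-contained substitute for what the paper leaves as a citation.
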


\subsection{Group theoretic perspectivity}

\begin{defi}Let $X$ be a set, $Y \subseteq X$ and $u \subseteq X \times X$. We say that $Y$ is $u$-small if $Y\times Y \subseteq u$. We define the set of $u$-small subsets by $Small(u)$.
\end{defi}

\begin{defi}Let $G$ be a group, $X$ a Hausdorff locally compact space and $\varphi: G \curvearrowright X$ a properly discontinuous and cocompact action. We say that a Hausdorff compactification $\bar{X}$ of $X$ is perspective if $\varphi$ extends continuously to an action on $\bar{X}$ and $\forall u \in \mathcal{U}, \ \forall K \subseteq X$ compact, $\#\{g \in G: \varphi(g,K)\notin Small(u)\} < \aleph_{0}$, where $\mathcal{U}$ is the only uniform structure compatible with the topology of $\bar{X}$ (given by Theorem 1, $\S 4$, Chapter II of \cite{Bou}).

We denote by $EPers_{0}(\varphi)$ the category whose objects are perspective compactifications of $X$ and morphisms are continuous equivariant maps that are the identity when restricted to $X$. We denote by $EMPers_{0}(\varphi)$ the full subcategory of $EPers_{0}(\varphi)$ whose objects are metrizable spaces. If $G = X$ and the action is the left multiplication action then we use the notation $EPers_{0}(G)$ and $EMPers_{0}(G)$ for such categories.
\end{defi}

\begin{prop}(Proposition 3.1 of \cite{So2}) A compactification $\bar{X}$ of $X$ has the perspectivity property if and only if $\varphi$ extend continuously to $\bar{X}$ and for $K \subseteq X$ a compact subspace, $y \in \bar{X}-X$ and $U$ an open neighbourhood of $y$, then there exists $V$ an open neighbourhood of $y$ such that $V \subseteq U$ and if, for $g \in G$, $\varphi(g,K) \cap V \neq \emptyset$, then $\varphi(g,K) \subseteq U$.
\end{prop}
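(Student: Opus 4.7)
The plan is to deduce each direction directly from the structure of the unique uniform structure $\mathcal{U}$ on the compact Hausdorff space $\bar{X}$ compatible with its topology. Since the continuous-extension clause appears on both sides of the equivalence, it suffices to translate between the uniform-smallness formulation and the pointwise open-neighborhood formulation.

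For the forward direction, assume the perspectivity property. Given compact $K \subseteq X$, a boundary point $y \in \bar{X} - X$, and an open neighborhood $U$ of $y$, I would first pick a symmetric open entourage $u \in \mathcal{U}$ with $u \circ u[y] \subseteq U$; this uses two standard facts about uniform spaces, namely that $\{u[y] : u \in \mathcal{U}\}$ is a neighborhood base at $y$ and that every entourage contains the composition-square of some symmetric entourage. Set $V_{0} = u[y] \subseteq U$. The perspectivity hypothesis applied to $u$ and $K$ yields a finite exceptional set $F = \{g \in G : \varphi(g, K) \notin Small(u)\}$. For $g \notin F$, if $\varphi(g, K) \cap V_{0} \neq \emptyset$ then picking any $z$ in the intersection and any $w \in \varphi(g, K)$ gives $(y, z), (z, w) \in u$ and hence $w \in u \circ u[y] \subseteq U$, so $\varphi(g, K) \subseteq U$. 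For each of the finitely many $g \in F$ with $\varphi(g, K) \not\subseteq U$, note that $\varphi(g, K)$ is the continuous image of a compact set, hence compact in $X$, closed in $\bar{X}$, and disjoint from $y$; intersect $V_{0}$ with the complements of these finitely many compact sets to obtain the required $V$.

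For the backward direction, assume the open-neighborhood condition and suppose, for contradiction, that some $u \in \mathcal{U}$ and compact $K$ witness failure of perspectivity, so that there exist infinitely many distinct $g_{n} \in G$ and pairs $(x_{n}, y_{n}) \in \varphi(g_{n}, K)^{2}$ with $(x_{n}, y_{n}) \notin u$. By compactness of $\bar{X}^{2}$ some subnet $(x_{n_{\alpha}}, y_{n_{\alpha}})$ converges to a pair $(x, y)$. If $x \in X$, a compact neighborhood $K'$ of $x$ eventually contains $x_{n_{\alpha}}$, giving infinitely many distinct $g_{n}$ with $\varphi(g_{n}, K) \cap K' \neq \emptyset$ and contradicting proper discontinuity; symmetrically $y \notin X$. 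If $x = y \in \bar{X} - X$, pick symmetric $v \in \mathcal{U}$ with $v \circ v \subseteq u$; eventually both $x_{n_{\alpha}}$ and $y_{n_{\alpha}}$ lie in the neighborhood $v[x]$, forcing $(x_{n_{\alpha}}, y_{n_{\alpha}}) \in v \circ v \subseteq u$, a contradiction. If $x \neq y$, use Hausdorffness to choose disjoint open $U \ni y$, $U' \ni x$, apply the hypothesis to $K, y, U$ to obtain $V$, and observe that eventually $y_{n_{\alpha}} \in V$, whence $\varphi(g_{n_{\alpha}}, K) \subseteq U$ and in particular $x_{n_{\alpha}} \in U$; but $x_{n_{\alpha}} \to x \in U'$ forces $x_{n_{\alpha}} \in U'$ eventually, contradicting $U \cap U' = \emptyset$.

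The main obstacle is the case split in the backward direction: the two components of the limiting pair $(x, y)$ may coincide at the boundary or separate, and each subcase requires a different mechanism—the square-root trick inside $\mathcal{U}$ when $x = y$, and the open-neighborhood hypothesis together with Hausdorff separation when $x \neq y$. One must also carefully pass to subnets rather than subsequences since $\bar{X}$ need not be metrizable, and use that the indices $n_{\alpha}$ tend to infinity in $\mathbb{N}$ so that distinctness of the $g_{n}$ survives the subnet passage and proper discontinuity can be invoked.
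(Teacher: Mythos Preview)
The paper does not supply its own proof of this proposition; it is merely cited as Proposition~3.1 of \cite{So2}. So there is no in-paper argument to compare against.

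That said, your proof is correct. The forward direction is clean: the square-root entourage $u$ handles all but finitely many translates, and excising the remaining finitely many bad compact translates from $V_{0}$ is exactly the right finishing move (using that each $\varphi(g,K)$ is compact in $X$, hence closed in $\bar{X}$, and misses the boundary point $y$). In the backward direction your three-case analysis is the natural one, and your caution about subnets is well placed: cofinality of the index map guarantees that the set of values $\{g_{n_{\alpha}}\}$ is infinite, which is what is needed to contradict proper discontinuity via a compact neighbourhood $K'$ of $x$ (here you implicitly use that $X$ is locally compact, which is part of the standing hypotheses). The diagonal case $x=y$ correctly bypasses the hypothesis and uses only the entourage square-root, while the off-diagonal case $x\neq y$ is where the open-neighbourhood condition is actually invoked.
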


\begin{defi}\label{pielambda}Let $G$ be a group, $X$ and $Y$ Hausdorff topological spaces with $X$ locally compact and $Y$ compact, $L: G \curvearrowright G$ the left multiplication action, $\varphi: G \curvearrowright X$ a  properly discontinuous cocompact action, $\psi: G \curvearrowright Y$ an action by homeomorphisms and $K \subseteq X$ a compact such that $\varphi(G,K) = X$. Define $\Pi_{K}: Closed(X) \rightarrow  Closed(G)$ as $\Pi_{K}(S) = \{g\in G: \varphi(g,K)\cap S \neq \emptyset\}$ and $\Lambda_{K}: Closed(G) \rightarrow Closed(X)$ as $\Lambda_{K}(F) = \varphi(F,K)$.
\end{defi}

\begin{prop}(Theorem 3.2 and Propositions 3.8 and 3.9 of \cite{So}) Let $K \subseteq X$ be a fundamental domain of the action $\varphi$. The functor $\Pi: EPers_{0}(G) \rightarrow EPers_{0}(\varphi)$ that sends the space $G+_{\partial}Y$ to  $X+_{\partial_{\Pi_{K}}}Y$, the action $L+\psi:G \curvearrowright G+_{\partial}Y$ to $\varphi+\psi: G \curvearrowright X+_{\partial_{\Pi_{K}}}Y$ and the map $id+\phi: G+_{\partial_{1}}Y_{1}\rightarrow G+_{\partial_{2}}Y_{2}$ to $id+\phi: X+_{(\partial_{1})_{ \Pi_{K}}} Y_{1}\rightarrow X+_{(\partial_{2})_{\Pi_{K}}}Y_{2}$, is a isomorphism of categories.

Furthermore, its inverse is the functor $\Lambda: EPers_{0}(\varphi) \rightarrow EPers_{0}(G)$ that sends $X+_{f}Y$ to $G+_{f_{\Lambda_{K}}}Y, \ \varphi+\psi:G \curvearrowright X+_{f}Y$ to $id+\psi: G \curvearrowright G+_{f_{\Lambda_{K}}}Y$ and $id+\phi: X+_{f_{1}}Y_{1}\rightarrow X+_{f_{2}}Y_{2}$ to $id+\phi: G+_{(f_{1})_{\Lambda_{K}}}Y_{1}\rightarrow G+_{(f_{2})_{\Lambda_{K}}}Y_{2}$.
\end{prop}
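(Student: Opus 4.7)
My plan is to verify four things in order: (a) that $\Pi_K$ and $\Lambda_K$ are admissible on closed sets, so the composites $\partial_{\Pi_K} := \partial\circ\Pi_K$ and $f_{\Lambda_K} := f\circ\Lambda_K$ are again admissible; (b) that $\Pi$ and $\Lambda$ send objects to objects; (c) that they are functorial; and (d) that $\Lambda\circ\Pi = id$ and $\Pi\circ\Lambda = id$. Step (a) is direct: $\varphi(g,K)$ meets $A\cup B$ iff it meets one of them, so $\Pi_K$ preserves unions, and $\Lambda_K$ preserves unions because $\varphi$ does in its first slot; both send $\emptyset$ to $\emptyset$.

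For (b), compactness of $X+_{\partial_{\Pi_K}}Y$ follows from \textbf{Proposition \ref{compactglue}}: if $A\in Closed(X)$ is non-compact, proper discontinuity and cocompactness force $\Pi_K(A)\subseteq G$ to be infinite, hence non-compact in the discrete space $G$, and compactness of $G+_{\partial}Y$ then gives $\partial(\Pi_K(A))\neq\emptyset$. Hausdorffness, the continuity of $\varphi+\psi$, and perspectivity of $X+_{\partial_{\Pi_K}}Y$ are all obtained by unpacking the Artin-Wraith topology and reducing to the corresponding properties of $G+_{\partial}Y$: given a compact $K'\subseteq X$, the set $S_{K'} := \{s\in G: \varphi(s,K)\cap K'\neq\emptyset\}$ is finite by proper discontinuity, and it converts a local condition on $X$ around a point $y\in Y$ into the corresponding condition on $G$. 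Step (c) reduces, via \textbf{Proposition \ref{continuousidentity}} (or its variant for different boundary targets), to the evident inclusion $(\partial_1)_{\Pi_K}(A)\subseteq (\partial_2)_{\Pi_K}(A)$ whenever $\partial_1\subseteq\partial_2$ in the pointwise sense.

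Step (d) is the computational heart. For $F\in Closed(G)$,
\[
(\partial_{\Pi_K})_{\Lambda_K}(F) = \partial(\Pi_K(\varphi(F,K))) = \partial(F\cdot S_0),
\]
where $S_0 := \{s\in G: \varphi(s,K)\cap K\neq\emptyset\}$ is finite. To see $\partial(F\cdot S_0) = \partial(F)$, for any $s\in G$ and any net $g_\alpha\in F$ with $g_\alpha\to y\in Y$, the perspectivity of $G+_{\partial}Y$ applied to the compact $\{e,s\}$ gives $g_\alpha s\to y$ as well, so $\partial(Fs) = \partial(F)$, and a finite union over $s\in S_0$ concludes. The identity $\Pi\circ\Lambda = id$ is symmetric: $(f_{\Lambda_K})_{\Pi_K}(A) = f(\varphi(\Pi_K(A),K))$, and for a net $x_\alpha = g_\alpha k_\alpha\in\varphi(\Pi_K(A),K)$ with $g_\alpha\in \Pi_K(A)$, $k_\alpha\in K$ and $x_\alpha\to y$, perspectivity of $X+_f Y$ applied to $K$ forces $g_\alpha K\subseteq U$ eventually for any neighborhood $U$ of $y$, trapping the companion points $a_\alpha\in A\cap g_\alpha K$ in $U$ as well; hence $\partial(\varphi(\Pi_K(A),K)) = \partial(A)$. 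The main obstacle is step (b), specifically the perspectivity transfer: one must chase neighborhoods through the Artin-Wraith topology on both sides and use the finite overlap set $S_{K'}$ as bookkeeping to see that a small neighborhood in $G+_{\partial}Y$ produces a small neighborhood in $X+_{\partial_{\Pi_K}}Y$ with the required containment property for any compact $K'\subseteq X$.
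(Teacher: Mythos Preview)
The paper does not give its own proof of this proposition: it is stated in the preliminaries with the citation ``(Theorem 3.2 and Propositions 3.8 and 3.9 of \cite{So})'' and no argument follows. So there is nothing in the present paper to compare your attempt against; the actual proof lives in the cited reference.

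That said, your outline is broadly on the right track and matches the shape one would expect from the cited source: verify admissibility of the transferred gluing maps, check that the resulting spaces are compact Hausdorff perspective compactifications with a continuous extended action, check functoriality, and then show the two composites are the identity by computing $\Pi_K\circ\Lambda_K$ and $\Lambda_K\circ\Pi_K$ and using perspectivity to kill the finite ``overlap set'' $S_0$. A few points deserve more care than your sketch gives them. First, you should verify that $\Lambda_K(F)=\varphi(F,K)$ is actually closed in $X$; this uses proper discontinuity and local compactness (any limit point lies in a compact neighbourhood meeting only finitely many translates of $K$). Second, in step~(b) you wave at Hausdorffness and at continuity of $\varphi+\psi$ on $X+_{\partial_{\Pi_K}}Y$ without really doing them; both require explicitly producing Artin--Wraith closed sets separating points, respectively checking that preimages of closed sets are closed, and this is where the bookkeeping with $S_{K'}$ must be carried out in full rather than alluded to. Third, your identity $\partial(Fs)=\partial(F)$ from perspectivity is correct but should be phrased via the uniform structure on $G+_{\partial}Y$: the compact $\{e,s\}$ has all but finitely many translates $u$-small, hence any net $g_\alpha\to y\in Y$ drags $g_\alpha s$ to the same limit. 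None of these are genuine gaps in strategy, only places where the write-up would need to be fleshed out to stand on its own.
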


Since $\Pi$ and $\Lambda$ do not depend of the choice of the fundamental domain (Propositions 3.18 and 3.20 of \cite{So}), we denote $\partial_{\Pi_{K}}$ by $\partial_{\Pi}$ and $f_{\Lambda_{K}}$ by $f_{\Lambda}$.

\begin{cor}(Corollary 3.5 of \cite{So}) Let $G$ be a countable group, $X$ a locally compact Hausdorff space with countable basis and $\varphi: G \curvearrowright X$ properly discontinuous. Then, the functor $\Pi$ maps $EMPers_{0}(G)$ to $EMPers_{0}(\varphi)$ isomorphically.
\end{cor}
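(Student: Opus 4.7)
The plan is to invoke the preceding proposition---which already provides $\Pi\colon EPers_{0}(G) \to EPers_{0}(\varphi)$ as an isomorphism of categories with inverse $\Lambda$---and reduce the corollary to the claim that both $\Pi$ and $\Lambda$ preserve metrizability. Since every object here is compact Hausdorff, Urysohn's metrization theorem lets me replace \emph{metrizable} by \emph{second countable}, so the task becomes transferring countable bases between $G+_{\partial}Y$ and $X+_{\partial_{\Pi}}Y$ (and symmetrically between $X+_{f}Y$ and $G+_{f_{\Lambda}}Y$).

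For $\Pi$, given a countable basis $\{(G \setminus F_{n}) \cup V_{n}\}_{n \in \N}$ of $G+_{\partial}Y$ (with $\partial(F_{n}) \cap V_{n} = \emptyset$), I would propose $\widetilde{W}_{n} = (X \setminus \varphi(F_{n},K)) \cup V_{n}$ as candidate basic open sets of $X+_{\partial_{\Pi}}Y$, where $K$ is the fundamental domain. A direct computation shows $\Pi_{K}(\varphi(F_{n},K)) = F_{n}S$ where $S = \{s \in G : sK \cap K \neq \emptyset\}$ is finite by proper discontinuity; the perspective property applied to $S$ then yields $\partial(F_{n}s) = \partial(F_{n})$ for each $s \in S$, whence $\partial(F_{n}S) = \partial(F_{n})$ and the required admissibility $\partial(\Pi_{K}(\varphi(F_{n},K))) \cap V_{n} = \emptyset$ holds. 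Combined with a countable basis $\mathcal{B}_{X}$ of $X$ (granted by hypothesis), the family $\{\widetilde{W}_{n}\} \cup \mathcal{B}_{X}$ is my candidate countable basis of $X+_{\partial_{\Pi}}Y$; to check this at a boundary point $y \in Y$, given any open $W = U \cup V \ni y$ I would form the associated open set $(G \setminus \Pi_{K}(X \setminus U)) \cup V$ of $G+_{\partial}Y$, extract some $(G \setminus F_{n}) \cup V_{n}$ inside it, and observe that then $\widetilde{W}_{n} \subseteq W$, using $\Pi_{K}(X \setminus U) \subseteq F_{n}$ together with $\varphi(\Pi_{K}(X \setminus U), K) \supseteq X \setminus U$.

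For $\Lambda$, the mirror strategy with $F_{n} = \{g \in G : gK \subseteq X \setminus U_{n}\}$ is slightly easier: $\varphi(F_{n}, K)$ is closed in $X$ by proper discontinuity, $\varphi(F_{n}, K) \subseteq X \setminus U_{n}$, so $f_{\Lambda}(F_{n}) = f(\varphi(F_{n},K)) \subseteq Y \setminus V_{n}$ and $(G \setminus F_{n}) \cup V_{n}$ is open in $G+_{f_{\Lambda}}Y$; together with the singletons of the discrete countable $G$, these give the required countable basis.

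The main obstacle will be the identity $\partial(F_{n}s) = \partial(F_{n})$ used in the $\Pi$ direction. I would establish it by identifying $\partial$ with the canonical admissible function $F \mapsto Cl_{G+_{\partial}Y}(F) \cap Y$ induced by the compactification topology, and invoking the perspective property in the form that for any uniform entourage $u$ only finitely many $g \in G$ have $\{g, gs\} \notin Small(u)$, so that convergence to a point of $Y$ is invariant under right multiplication by any $s$ in the finite set $S$.
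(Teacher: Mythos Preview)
The paper does not supply a proof of this corollary: it is quoted verbatim from the author's earlier work (listed there as Corollary~3.5), and no argument appears in the present text. So there is nothing to compare your attempt against directly.

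That said, your strategy is sound and would constitute a valid self-contained proof. Reducing metrizability to second countability via Urysohn, and then pushing countable bases back and forth using the explicit formulas $\partial_{\Pi_K}(A)=\partial(\Pi_K(A))$ and $f_{\Lambda_K}(F)=f(\varphi(F,K))$, is exactly the natural route. Your key identity $\Pi_K(\varphi(F_n,K))=F_nS$ with $S=\{s:sK\cap K\neq\emptyset\}$ finite is correct, and the step $\partial(F_ns)=\partial(F_n)$ does follow from perspectivity applied to the compact set $\{1,s\}\subseteq G$ as you outline (nets converging to a boundary point remain convergent to the same point after right translation by a fixed $s$). Two small points worth making explicit: (i) the statement silently inherits the cocompactness hypothesis from the preceding proposition (otherwise there is no fundamental domain $K$ and $\Pi$ is undefined), so you are right to use it; (ii) in the $\Pi$ direction you should record that $\varphi(F_n,K)=\bigcup_{g\in F_n}gK$ is closed in $X$---this holds because proper discontinuity makes the family $\{gK\}_{g\in G}$ locally finite---so that your candidate sets $\widetilde W_n$ are genuinely open. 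With those remarks added, the argument goes through.
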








\subsection{Convergence actions}

\begin{defi}Let $G$ be a group, $X$ a topological space and $\varphi: G\curvearrowright X$ an action by homeomorphisms. We say that $\varphi$ is properly discontinuous if for every compact set $K \subseteq X$, the set $\{g \in G: \varphi(g,K)\cap K \neq \emptyset\}$ is finite. We say that $\varphi$ is cocompact if the quotient space $X/\varphi$ is compact. A compact subspace $K \subseteq X$ such that $\varphi(G,K) = X$ is called a fundamental domain of $\varphi$.
\end{defi}

\begin{defi}Let $G$ be a group, $X$ a Hausdorff compact topological space and $\varphi: G \curvearrowright X$ an action by homeomorphisms. We say that $\varphi$ has the convergence property if the induced action on the space of distinct triples is properly discontinuous.
\end{defi}

\begin{obs}Regardless the definition above allow us to consider these actions, we are not considering $\varphi$ as a convergence action if the set $X$ has cardinality $2$, unless that $G$ is virtually cyclic, $X = Ends(G)$ and $\varphi$ is the action induced by the left multiplication action $G \curvearrowright G$. \end{obs}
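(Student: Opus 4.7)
The remark is a convention rather than a theorem with a formal derivation, but its content reduces to two elementary verifications that I would carry out in order. First, I would check that the literal definition of convergence action becomes vacuous when $|X|=2$: writing $\Theta^{3}(X)=\{(x_{1},x_{2},x_{3})\in X^{3} : x_{i}\neq x_{j} \text{ whenever } i\neq j\}$ for the space of ordered distinct triples, one sees immediately that $\Theta^{3}(X)=\emptyset$ whenever $|X|\leq 2$, since any triple must repeat at least one coordinate. Consequently the induced $G$-action on $\Theta^{3}(X)$ is trivially properly discontinuous, and under the stated definition every action of every group on a two-point space would qualify as a convergence action. Excluding the case $|X|=2$ therefore loses nothing substantive.

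Second, I would justify retaining the single exception. For $G$ virtually cyclic, $Ends(G)$ is a two-point set, and the left multiplication action on $G$ extends to an action on $G \cup Ends(G)$ in which the infinite-order elements act loxodromically, fixing or exchanging the two ends. This is the canonical degenerate example of a convergence dynamics, and keeping it inside the class of convergence actions avoids awkward case splits in later results where two-ended subgroups appear naturally; in particular, two-ended hyperbolic groups and two-ended peripheral subgroups arise in the hypotheses of \textbf{Theorem \ref{preserveshyperboliccompactification}} and \textbf{Proposition \ref{preserviingrelativelyhyperboliccompactification}}, and one wants the statements there not to fail accidentally on the cardinality-two boundary.

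The main obstacle is philosophical rather than mathematical: since the statement is a choice of terminology, there is no chain of implications to be extracted. The actual work therefore consists of (i) the vacuity check on $\Theta^{3}(X)$ when $|X|\leq 2$, which ensures the convention only removes trivial examples, and (ii) the structural observation that the surviving exceptional case is dynamically non-trivial and is moreover the standard usage in the Gromov--Bowditch--Tukia--Gerasimov literature on which the paper relies. Neither step requires any material beyond what has already been set up in the excerpt.
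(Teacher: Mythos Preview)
Your reading is correct: the passage is a terminological convention, and the paper supplies no proof for it at all --- the \texttt{obs} environment stands alone with no argument following. Your two checks (vacuity of $\Theta^{3}(X)$ when $|X|\le 2$, and the dynamical non-triviality of the virtually cyclic exception) are exactly the justification one would want, and they go slightly beyond what the paper itself offers. One minor caution: the forward references to \textbf{Theorem \ref{preserveshyperboliccompactification}} and \textbf{Proposition \ref{preserviingrelativelyhyperboliccompactification}} are plausible motivation but are your own interpolation; the paper does not make that connection explicit, and the more immediate reason for the exception is that \textbf{Definition \ref{relhyppair}} and \textbf{Definition \ref{attractorsumcomp}} should still apply to two-ended groups acting on their end spaces.
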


\begin{defi}\label{attractorsumcomp}Let $G$ be a group acting properly discontinuously on a locally compact Hausdorff space $X$ and acting on a Hausdorff compact space $Y$ with the convergence property. The attractor-sum compactification of $X$ is the unique compactification of $X$ with $Y$ as its remainder and such that the action of $G$ on it (that extends both actions) has the convergence property. We denote such compactification by $X+_{f_{c}} Y$ (where $c$ means that the action still has the convergence property).
\end{defi}

\begin{obs}The existence and uniqueness of the attractor-sum compactification is due to Gerasimov (Proposition 8.3.1 of \cite{Ge2}).
\end{obs}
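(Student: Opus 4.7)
My plan is to build the compactification by an Artin-Wraith glueing and then verify the convergence property. Fix a compact fundamental domain $K \subseteq X$ for $\varphi$, and define an admissible map $f_c: \mathrm{Closed}(X) \to \mathrm{Closed}(Y)$ by declaring $y \in f_c(A)$ whenever there is a sequence $(g_n) \subseteq \Pi_K(A)$ with no constant subsequence such that $g_n y_0 \to y$ in $Y$ for some (equivalently, cofinitely many) $y_0 \in Y$. Here $\Pi_K$ is as in Definition \ref{pielambda}, and the equivalence of the two quantifiers is guaranteed by the convergence property of $\psi$: any such sequence $(g_n) \subseteq G$ admits a subsequence and attractor--repeller points $a, b \in Y$ so that $g_{n_k} y_0 \to a$ for all $y_0 \neq b$.

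I would first verify that $X +_{f_c} Y$ is a compactification of $X$. Compactness follows from Proposition \ref{compactglue}: if $A \subseteq X$ is closed and not relatively compact, then $\Pi_K(A)$ is infinite (by proper discontinuity applied to $K$ and a suitable open neighbourhood of $A$), so $f_c(A) \neq \emptyset$ by compactness of $Y$. Hausdorffness and the fact that $X$ embeds as a dense open subspace are routine from the definition of the Artin--Wraith topology together with the convergence property separating distinct boundary points. The $G$-action extends continuously because $f_c$ is $G$-equivariant --- this uses independence of $\Pi_K$ from the choice of fundamental domain (noted after Definition \ref{pielambda}) and the equivariance relation $\Pi_{gK}(gA) = g \, \Pi_K(A)$.

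The main obstacle will be checking that the extended action $\varphi + \psi$ has the convergence property, i.e.\ its induced action on the space of distinct triples of $X +_{f_c} Y$ is properly discontinuous. For a sequence $(g_n) \subseteq G$ without constant subsequence, the convergence property of $\psi$ supplies attractor--repeller points $a, b \in Y$ with $g_n y \to a$ for $y \neq b$; I would show $g_n x \to a$ also for every $x \in X$, using proper discontinuity on $X$ (so $g_n x$ eventually leaves every compact subset of $X$) combined with the defining property of $f_c$ (which forces the limit in $Y$ to coincide with $a$, since $\Pi_K(\{g_n x\})$ differs from $(g_n)$ only by a bounded perturbation coming from writing $x \in gK$).

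For uniqueness, I would show that any other compactification $X +_g Y$ satisfying the hypothesis must have $g = f_c$: the convergence dynamics on $X +_g Y$ force $g_n K$ to accumulate exactly at $a$ whenever $g_n y_0 \to a$ in $Y$, which pins down the value of $g$ on every closed set of $X$ to agree with $f_c$. Applying Proposition \ref{continuousidentity} in both directions then yields the required identity homeomorphism between $X +_{f_c} Y$ and $X +_g Y$.
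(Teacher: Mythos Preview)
The paper does not prove this statement; the remark is simply an attribution to Gerasimov \cite{Ge2}, so there is no proof in the paper to compare your sketch against. What you have written is an attempt to reconstruct Gerasimov's argument, and its overall architecture---build an Artin--Wraith glueing, verify compactness via \textbf{Proposition \ref{compactglue}}, check the convergence property on the glued space, then deduce uniqueness from \textbf{Proposition \ref{continuousidentity}}---is reasonable in outline.

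However, your definition of $f_c$ contains a genuine error: the claimed equivalence between ``for some $y_0 \in Y$'' and ``for cofinitely many $y_0 \in Y$'' is false, and the ``some $y_0$'' version gives the wrong map. Take a loxodromic $g \in G$ with attracting fixed point $a \in Y$ and repelling fixed point $b \in Y$, and let $A = \{\varphi(g^{n}, x_{0}) : n \geq 1\}$ for some $x_{0} \in K$. Then $(g^{n})_{n \geq 1} \subseteq \Pi_{K}(A)$ has no constant subsequence and satisfies $\psi(g^{n},b) = b \to b$, so your ``some $y_{0}$'' clause forces $b \in f_{c}(A)$; but in the genuine attractor-sum compactification $A$ accumulates only at $a$. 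Hence your $f_{c}$ strictly contains the correct admissible map, and since both glueings are compact and the correct one is Hausdorff, the continuous bijection $id: X+_{f_{c}^{\mathrm{corr}}}Y \to X+_{f_{c}}Y$ (from \textbf{Proposition \ref{continuousidentity}}) would be a homeomorphism if your space were Hausdorff---a contradiction. The fix is to require $y$ to be the \emph{attractor} of a collapsing subsequence drawn from $\Pi_{K}(A)$, i.e.\ $\psi(g_{n_{k}},z) \to y$ for all $z$ off a single repeller, rather than $\psi(g_{n},y_{0}) \to y$ for one $y_{0}$ that may itself be the repeller. A secondary issue: \textbf{Definition \ref{attractorsumcomp}} assumes only proper discontinuity, not cocompactness, so a compact fundamental domain $K$ and the map $\Pi_{K}$ from \textbf{Definition \ref{pielambda}} are not available without adding that hypothesis (which the paper's introduction does include when stating Gerasimov's theorem, but the definition itself does not).
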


\begin{prop}\label{convergenceisperspective}(Gerasimov - Proposition 7.5.4 of \cite{Ge2}) Let $G$ be a group acting on a Hausdorff compact space $Y$ with the convergence property. Then the compactification $G+_{\partial_{c}}Y$ has the group theoretic perspectivity property.
\end{prop}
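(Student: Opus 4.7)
The plan is to argue by contradiction, using the finiteness formulation of group theoretic perspectivity. Since the attractor-sum $Z := G+_{\partial_c}Y$ is constructed so that the combined $G$-action extends continuously, only the finiteness condition needs to be verified: for every compact (equivalently, finite, since $G$ is discrete) $K \subseteq G$ and every entourage $u$ of the unique compatible uniform structure $\mathcal{U}$ on $Z$, the set $\{g \in G : \varphi(g,K) \notin Small(u)\}$ must be finite. Suppose this fails for some $K$ and $u$; then there is an infinite sequence of distinct $g_n \in G$ together with $k_n, k_n' \in K$ satisfying $(g_n k_n, g_n k_n') \notin u$. Since $K$ is finite, I pass to a subsequence along which $k_n = k$ and $k_n' = k'$ are constant, with $k \neq k'$ (otherwise the pair would lie in the diagonal, which is contained in every $u$).

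Next I apply the convergence property of the extended $G$-action on $Z$: after a further subsequence there exist $a, b \in Z$ with $g_n z \to a$ uniformly on compact subsets of $Z - \{b\}$. The key side claim is that $a, b \in Y$. For $a$: the case $|G| \leq 2$ is vacuous (no infinite sequence of distinct $g_n$ exists), so I may assume $G$ infinite and pick $h \in G - \{b\}$; then $g_n h$ is a sequence of distinct elements of the open discrete subspace $G \subseteq Z$ converging to $a$, so $a$ cannot lie in $G$, whence $a \in Y$. For $b$: I apply the same convergence property to $(g_n^{-1})$ to obtain analogous points $a_*, b_* \in Z$ with $a_* \in Y$ by the same argument; the standard identity $b = a_*$ (valid whenever $|Z| > 2$; it follows from the observation that if $a \neq b_*$, then for every $z \in Z - \{b, a_*\}$ the sequence $g_n z$ eventually lies in a compact neighbourhood of $a$ disjoint from $b_*$, forcing the constant sequence $g_n^{-1}(g_n z) = z$ to converge to $a_*$, i.e. $z = a_*$, contradicting the arbitrariness of $z$) then yields $b \in Y$.

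With $a, b \in Y$, both $k, k' \in G$ differ from $b$, so $g_n k \to a$ and $g_n k' \to a$. The uniform structure on the compact Hausdorff space $Z$ admits a basis of open entourages of the diagonal, so I may pick an open neighbourhood $W$ of $a$ with $W \times W \subseteq u$; eventually $g_n k, g_n k' \in W$, and hence $(g_n k, g_n k') \in W \times W \subseteq u$, contradicting the choice of $k_n, k_n'$.

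The hard part will be the side claim $b \in Y$, which rests on the standard identification of $b$ with the attractor $a_*$ of the inverse sequence $(g_n^{-1})$; once this identification is verified, the rest is a routine combination of the convergence property at $a$ with the uniform-structure machinery on a compact Hausdorff space.
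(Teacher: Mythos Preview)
The paper does not supply its own proof of this proposition; it is quoted verbatim as Gerasimov's result (Proposition~7.5.4 of \cite{Ge2}) and used as a black box. So there is nothing in the paper to compare your argument against.

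That said, your proof is the standard one and is essentially correct. Two small remarks. First, you are silently using the equivalence between the paper's definition of the convergence property (properly discontinuous action on distinct triples, Definition~1.26) and the attractor--repeller characterisation (every sequence of distinct elements has a collapsing subsequence); this equivalence is classical for compact Hausdorff spaces with at least three points, but it is worth saying so explicitly since the paper only states the triple-space definition. Second, your parenthetical justification of $b = a_{*}$ is slightly tangled: assuming $a \neq b_{*}$ and taking $z \in Z - \{b, a_{*}\}$, your argument in fact yields $a = b_{*}$, not $b = a_{*}$. The identity you actually want follows by running the identical argument with the roles of $(g_{n})$ and $(g_{n}^{-1})$ swapped (assume $a_{*} \neq b$, take $z \in Z - \{b_{*}, a\}$, and conclude $z = a$). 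This is a cosmetic slip, not a genuine gap; once corrected, the proof goes through.
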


\begin{defi}Let $\varphi: G \curvearrowright X$ be a convergence action and $p \in X$. We say that $p$ is a conical point if there is an infinite set $K \subseteq G$ such that $\forall q \neq p$, $Cl_{X}(\{(\varphi(g,p),\varphi(g,q)): g \in K\})\cap \Delta X = \emptyset$.
\end{defi}

\begin{defi}Let $\varphi: G \curvearrowright X$ be an action by homeomorphisms. A point $p \in X$ is bounded parabolic if the action $\varphi|_{Stab_{\varphi}p\times X-\{p\}}: Stab_{\varphi}p \curvearrowright X-\{p\}$ is properly discontinuous and cocompact.
\end{defi}

\begin{defi}\label{relhyppair}Let $G$ be a group, $X$ a Hausdorff compact space and $\varphi: G \curvearrowright X$ a minimal action by homeomorphisms. We say that $\varphi$ is relatively hyperbolic if it has the convergence property and its limit set has only conical and bounded parabolic points. If $\mathcal{P}$ is a representative set of conjugation classes of stabilizers of bounded parabolic points of $X$, then we say that $G$ is relatively hyperbolic with respect to $\mathcal{P}$ (or equivalently that $(G,\mathcal{P})$ is a relatively hyperbolic pair). We say that $X$ is the Bowditch boundary of the pair $(G,\mathcal{P})$ and we denote it by $X = \partial_{B}(G,\mathcal{P})$.

If $(G,\mathcal{P})$ is a relatively hyperbolic pair, then we call $G+_{\partial_{c}}\partial_{B}(G,\mathcal{P})$ a relatively hyperbolic compactification of $G$.
\end{defi}

\begin{obs}If $(G,\mathcal{P})$ is a relatively hyperbolic pair, then its Bowditch boundary is uniquely defined, up to unique equivariant homeomorphism (Corollary 6.1(e) of \cite{GP2}).

An equivalent definition is that the minimal convergence action on $X$ induces a cocompact action on the space of distinct pairs (1C of \cite{Tu} and Main Theorem of \cite{Ge1}).
\end{obs}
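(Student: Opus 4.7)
The Remark collects two classical results from the cited literature; I sketch, as a forward-looking plan, how I would prove each directly.

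\textbf{Uniqueness of the Bowditch boundary.} Suppose $\varphi_{i}: G \curvearrowright X_{i}$ for $i = 1,2$ are two minimal convergence actions whose limit points are all conical or bounded parabolic, and whose maximal parabolic stabilizers fall into the same conjugacy classes represented by $\mathcal{P}$. The plan is to build a unique $G$-equivariant homeomorphism $\Phi: X_{1} \to X_{2}$. Pick basepoints $\xi_{i} \in X_{i}$ that are conical for their respective action. For $\xi \in X_{1}$ choose $g_{n} \in G$ with $g_{n}\xi_{1} \to \xi$ and set $\Phi(\xi) = \lim g_{n}\xi_{2}$. I would verify: (i) existence of the limit in $X_{2}$, using that a convergence action collapses the complement of a repelling point onto an attracting point; (ii) independence of the choice of $g_{n}$, splitting into the case where $\xi$ is conical (the dynamics on $X_{2}$ are controlled by the north--south picture) and where $\xi$ is parabolic (two sequences $g_{n}, h_{n}$ differ by elements of a common stabilizer $P \in \mathcal{P}$); (iii) continuity, equivariance and injectivity, all from the symmetric construction. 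Uniqueness of $\Phi$ follows from minimality: any equivariant continuous map is determined by its value on a single orbit, which is dense.

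\textbf{Equivalence with cocompactness on distinct pairs.} Write $\Theta^{2} X = X \times X \setminus \Delta X$. In one direction, assume every limit point is conical or bounded parabolic. I would cover $\Theta^{2} X$ by two kinds of pieces: (a) for each $\mathcal{P}$-conjugacy class of parabolic fixed points $p$, a neighbourhood of the ``parabolic diagonal'' $\{(p,y): y \neq p\}$ whose quotient by the stabilizer of $p$ is compact by the bounded parabolic hypothesis; (b) a complementary ``thick'' region that is cocompact because conicality produces, for each pair $(x,y)$, a sequence $g_{n}$ that uniformly pulls $(x,y)$ into a fixed compact subset of $\Theta^{2} X$. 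Conversely, if $G \curvearrowright \Theta^{2} X$ is cocompact, then for each limit point $p$ the restriction of the cocompactness condition to $\{p\} \times (X \setminus \{p\})$ yields either that the stabilizer of $p$ acts cocompactly on $X \setminus \{p\}$ (so $p$ is bounded parabolic), or that some $G$-orbit in $\Theta^{2} X$ accumulates away from the diagonal, from which convergence dynamics extracts a sequence witnessing conicality of $p$.

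\textbf{Main obstacle.} The subtle point in the uniqueness sketch is step (ii): when $g_{n}, h_{n} \in G$ both satisfy $g_{n}\xi_{1}, h_{n}\xi_{1} \to \xi$, they need not differ by a bounded element, yet the images $g_{n}\xi_{2}, h_{n}\xi_{2}$ must share the same limit in $X_{2}$. The resolution uses convergence dynamics simultaneously in both actions: after passing to subsequences, $g_{n}^{-1}h_{n}$ has well-defined attracting/repelling data in each $X_{i}$, and the coincidence of the parabolic conjugacy classes under $\mathcal{P}$ forces those data to match. For the pairs-equivalence, the delicate direction is the forward one, where producing a compact fundamental domain in a neighbourhood of the parabolic diagonal genuinely requires the bounded-parabolicity of the parabolic points and cannot be bypassed by conicality alone.
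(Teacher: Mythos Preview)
The paper does not supply a proof of this Remark at all: it is a literature citation, pointing to Corollary 6.1(e) of \cite{GP2} for the uniqueness and to Tukia \cite{Tu} and Gerasimov \cite{Ge1} for the equivalence. So there is no ``paper's own proof'' to compare against; you are sketching arguments for results the author simply imports.

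That said, your sketches have real gaps. For uniqueness, the construction $\Phi(\xi)=\lim g_{n}\xi_{2}$ does not obviously converge: after passing to a subsequence the convergence property in $X_{2}$ gives an attractor $a_{2}$, but different subsequences of the same $(g_{n})$ may have different attractors in $X_{2}$, and nothing in your setup ties the attractor in $X_{2}$ to the limit $\xi$ in $X_{1}$. Your proposed resolution via $g_{n}^{-1}h_{n}$ is where the whole difficulty lives, and ``the coincidence of the parabolic conjugacy classes forces those data to match'' is an assertion, not an argument. The proof in \cite{GP2} proceeds quite differently, through Floyd boundaries and a pull-back space that simultaneously dominates both $X_{1}$ and $X_{2}$; a more classical route (Bowditch) goes through the coned-off Cayley graph or a fine hyperbolic graph, so that both boundaries are realized as the Gromov boundary of a single hyperbolic object.

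For the equivalence, the forward direction (conical/parabolic $\Rightarrow$ cocompact on pairs) is roughly as you say and is Tukia's contribution. The converse is Gerasimov's Main Theorem in \cite{Ge1} and is substantially harder than your sketch indicates: restricting a cocompact action on $\Theta^{2}X$ to the slice $\{p\}\times(X\setminus\{p\})$ does not yield a cocompact action of $\mathrm{Stab}(p)$ on that slice, so the dichotomy you state does not follow. Gerasimov's argument instead builds an auxiliary hyperbolic space (via a system of entourages, or equivalently expansivity) on which $G$ acts geometrically finitely, and then reads off conicality and bounded parabolicity from that model.
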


\subsection{Floyd compactification}

Let $\Gamma$ be a locally finite graph, $(X,d)$ its set of vertices with the metric induced by the metric on the graph and $f: \N \rightarrow \R_{>0}$ a map satisfying:

\begin{enumerate}
    \item $\exists k > 0$ such that $\forall n \in \N$, $1 \leqslant \frac{f(n)}{f(n+1)} \leqslant k$.   
    \item The series $\sum_{n \in \N}f(n)$ converges.
\end{enumerate}

If $v \in X$, then we get a new metric on $X$ given by $\delta_{v}(x,y) = \\ \inf\{\sum_{i =1}^{k} f(d(v,\{x_{i},x_{i+1}\})): x = x_1,...,x_{k} = y$ is a path between $x$ and $y\}$. The Floyd compactification of $X$ with respect to the Floyd map $f$ (i.e. a map that satisfies the conditions above) and base point $v$ is the Cauchy completion of $X$ with respect to the metric $\delta_{v}$ (as a topological space, it does not depend of the choice of the base point). We will denote it by $X+_{\partial_{f}}\partial_{f}(X)$. See \cite{Fl} for details.

\begin{prop}The Floyd compactification $X+_{\partial_{f}}\partial_{f}(X)$ is a perspective compactification of $(X,\varepsilon_{d})$.
\end{prop}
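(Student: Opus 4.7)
The plan is to verify condition (2) of the perspectivity characterization (the proposition immediately preceding Section 1.3): for every $e \in \varepsilon_d$, I need that $e$ is proper and that every net $(x_\gamma,y_\gamma)_\gamma$ in $e$ with $x_\gamma \to \xi \in \partial_f(X)$ satisfies $y_\gamma \to \xi$ as well.

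First I would fix $e \in \varepsilon_d$ and put $N = \sup\{d(x,y): (x,y)\in e\} < \infty$. Since $X$ is the vertex set of a locally finite graph with the combinatorial metric, $X$ is discrete and topologically bounded subsets of $X$ are precisely the finite ones. So for a finite $B \subseteq X$, local finiteness of the graph immediately gives that $\B(B,e) \subseteq \bigcup_{b\in B}\{x: d(x,b)\leq N\}$ is finite; the argument for $e^{-1}$ is identical, establishing properness.

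The substantive step is the convergence condition, and it rests on one estimate for the Floyd metric. Condition (1) on the Floyd map $f$ forces $f$ to be non-increasing on $\N$, and condition (2) together with the summability of $\sum_n f(n)$ gives $f(n) \to 0$. Taking a $d$-geodesic $x = x_0, x_1, \dots, x_k = y$ with $k = d(x,y)$, every edge $\{x_i,x_{i+1}\}$ lies at distance at least $d(v,x) - k$ from $v$; using monotonicity of $f$, I obtain the key bound
\[
\delta_v(x,y) \;\leq\; \sum_{i=0}^{k-1} f\bigl(d(v,\{x_i,x_{i+1}\})\bigr) \;\leq\; d(x,y)\cdot f\bigl(\max(0,\, d(v,x) - d(x,y))\bigr).
\]
Now if $x_\gamma \to \xi \in \partial_f(X)$ in the Floyd compactification, then $(x_\gamma)$ eventually leaves every finite subset of $X$ (since $X$ is open and discrete in the compactification and $\xi \notin X$), and by local finiteness this forces $d(v,x_\gamma) \to \infty$. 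Combined with $d(x_\gamma, y_\gamma) \leq N$ and $f(n) \to 0$, the displayed estimate yields $\delta_v(x_\gamma, y_\gamma) \to 0$, so in the Cauchy completion $y_\gamma$ converges to the same point $\xi$.

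The only delicate point I foresee is the geodesic estimate itself, specifically keeping the argument of $f$ inside $\N$ via the $\max(0, \cdot)$ when $x$ is close to $v$; that case contributes nothing to the asymptotics but must be written out. Everything else is a routine assembly of the definitions of $\delta_v$ and of $\varepsilon_d$ with the two hypotheses on the Floyd map.
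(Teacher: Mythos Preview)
Your argument is correct. The estimate $\delta_v(x,y)\le d(x,y)\cdot f\bigl(\max(0,d(v,x)-d(x,y))\bigr)$ follows exactly as you say from the non-increasing nature of $f$ (which is the lower inequality in condition~(1)) and the triangle inequality applied to the vertices of a $d$-geodesic; together with $f(n)\to 0$ and the local finiteness of the graph it gives the net convergence immediately.

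The paper takes a different route: it cites Gerasimov--Potyagailo to get that the Floyd compactification has the \emph{Karlsson property} (for every entourage $u$ there is a finite set $S$ such that every geodesic missing $S$ is $u$-small), and then invokes an external proposition (Proposition~3.41 of \cite{So}) that Karlsson implies perspectivity. Your proof unpacks what is essentially the same geometric content---edges far from the basepoint have small Floyd length---but applies it directly to the net criterion for perspectivity rather than passing through the Karlsson statement. The benefit of your approach is that it is entirely self-contained and uses nothing beyond the definitions of $\delta_v$ and $\varepsilon_d$; the paper's approach is shorter on the page but leans on two external references, and its remark also records the alternative proof via convergence actions in the group case.
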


\begin{proof}By Proposition 2.2 of \cite{GP}, the Floyd compactification of a locally finite graph has the Karlsson property, which implies, by Proposition 3.41 of \cite{So}, that it has the perspectivity property.
\end{proof}

\begin{obs}Let $\U_{f}$ be the uniform structure compatible with the topology of $X+_{\partial_{f}}\partial_{f}(X)$. By the Karlsson property, we mean that for every $u \in \U_{f}$, there exists a bounded set $S \subseteq X$ (i.e. $S$ is finite) such that every geodesic that does not intersect $S$ is $u$-small.

For finitely generated  groups, this proposition is an immediate consequence of the fact that groups act on their Floyd boundaries with the convergence property (Gerasimov-Potyagailo, Proposition 4.3.1 of \cite{GP}) and the fact that equivariant compactifications of a group with the convergence property have the perspectivity property (Proposition 7.5.4 of \cite{Ge2}).
\end{obs}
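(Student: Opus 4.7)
The plan is to verify directly that $\varepsilon_{d} \subseteq \varepsilon_{\partial_{f}}$, which by the definition of perspectivity amounts to showing that every $e \subseteq X \times X$ with $R := \sup\{d(x,y): (x,y) \in e\} < \infty$ is proper and satisfies the neighbourhood formulation (condition (3)) of perspectivity. The essential input I would invoke is the Karlsson property of the Floyd compactification (Proposition 2.2 of \cite{GP}): for every entourage $u$ of the unique uniform structure $\U_{f}$ on $X+_{\partial_{f}}\partial_{f}(X)$, there is a finite $S \subseteq X$ such that every $d$-geodesic in $\Gamma$ disjoint from $S$ is $u$-small. Since the compactification is a Cauchy completion and hence metrizable by $\delta_{v}$, this translates to: for every $\epsilon > 0$ there is a finite $S_{\epsilon}$ such that every $d$-geodesic disjoint from $S_{\epsilon}$ has $\delta_{v}$-diameter less than $\epsilon$.

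Given $e$ as above, a point $x \in \partial_{f}(X)$, and a neighbourhood $V$ of $x$, I would choose $\epsilon > 0$ with $B_{\delta_{v}}(x,2\epsilon) \subseteq V$, extract $S_{\epsilon}$ from the Karlsson property, and define
$$U := B_{\delta_{v}}(x,\epsilon) \setminus \B(S_{\epsilon},e'),$$
where $e' = \{(x,y) : d(x,y)\leq R\}$. Because $\Gamma$ is locally finite and $S_{\epsilon}$ is finite, $\B(S_{\epsilon},e')$ is a finite (hence compact) subset of $X$, closed in the compactification, so $U$ is open; clearly $x \in U \subseteq V$.

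To verify the perspectivity condition, suppose $(a,b) \in e$ with $a \in U$. Then $d(a,S_{\epsilon}) > R \geq d(a,b)$, so every vertex of any $d$-geodesic from $a$ to $b$ lies within $d$-distance $R$ of $a$ and therefore misses $S_{\epsilon}$. By the Karlsson property the $\delta_{v}$-diameter of this geodesic is less than $\epsilon$, giving $\delta_{v}(a,b) < \epsilon$ and hence $\delta_{v}(b,x) < 2\epsilon$, so $b \in V$ as required. Properness of $e$ is immediate from local finiteness: $\B(B,e) \subseteq \B(B,e')$ is finite whenever $B$ is. The main obstacle is the Karlsson property itself, whose proof depends in a non-trivial way on both conditions on $f$ (convergence of $\sum f(n)$, used to bound the tails of Floyd-lengths, and the bounded ratio $f(n)/f(n+1) \leq k$, used to compare Floyd-weights of nearby edges); rather than reprove this, I would simply cite \cite{GP}.
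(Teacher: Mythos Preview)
Your argument is correct and follows the same route as the paper's proof of the preceding proposition: invoke the Karlsson property from \cite{GP} and deduce coarse geometric perspectivity. The only difference is cosmetic --- the paper black-boxes the implication ``Karlsson property $\Rightarrow$ perspectivity'' by citing Proposition~3.41 of \cite{So}, whereas you unpack that implication explicitly (your ball-minus-finite-set construction of $U$ and the geodesic argument are exactly the content of that cited result, specialized to the bounded coarse structure on a locally finite graph). One small remark: the observation you were given also records a \emph{second}, independent route valid for finitely generated groups, namely that $G$ acts on its Floyd boundary with the convergence property and that convergence compactifications are automatically group-theoretically perspective (\textbf{Proposition~\ref{convergenceisperspective}}), hence coarse-geometrically perspective by \textbf{Proposition~\ref{coarsegroup}}. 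You do not touch this alternative, but it is only a side comment in the remark, not the main argument.
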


\section{Coarse geometric perspectivity vs group theoretic perspectivity}
\label{cgeoperspvpersp}

\begin{defi}\label{coarseactionstructure}Let $G$ be a group, $X$ a Hausdorff locally compact paracompact space and $\varphi: G \curvearrowright X$ a properly discontinuous and cocompact action. For $A \subseteq X$ we define the saturation of $A$ as $Sat(A) = \{(\varphi(g,x),\varphi(g,x')): g \in G, x,x' \in A\}$. We denote by $\varepsilon_{\varphi}$ the coarse structure on $X$ generated by the set $\{Sat(A): A$ is topologically bounded on $X\}$. On the special case where $X = G$ and $\varphi = L$ is the left multiplication action, we denote $\varepsilon_{\varphi}$ by $\varepsilon_{G}$.
\end{defi}

\begin{prop}Let $G$ be a group, $X$ a Hausdorff locally compact paracompact space, $\varphi: G \curvearrowright X$ a properly discontinuous and cocompact action. Then $\mathcal{A} = \{Sat(U_{1})\circ ...\circ Sat(U_{n}): U_{1},...,U_{n}$ are topologically bounded $\}$ is a basis for the coarse structure $\varepsilon_{\varphi}$.
\end{prop}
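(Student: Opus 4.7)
The plan is to invoke the earlier ``basis proposition'' (the unlabelled one stated after the definition of the coarse structure generated by $\mathcal{A}$). That proposition reduces the task to checking that (a) $\mathcal{A} \subseteq \varepsilon_{\varphi}$ and (b) $\mathcal{A}$ satisfies the four closure conditions listed there. Containment (a) is immediate: each $Sat(U_{i})$ is a generator of $\varepsilon_{\varphi}$ by definition, and coarse structures are closed under composition.

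The core observation that drives the closure conditions is that, since $\varphi$ is cocompact, we can fix a compact fundamental domain $K \subseteq X$; then for every $x \in X$ we have $x = \varphi(g,y)$ for some $y \in K$, so $(x,x) = (\varphi(g,y),\varphi(g,y)) \in Sat(K)$, i.e.\ $\Delta X \subseteq Sat(K)$. This handles condition (1). For condition (4), the composition of two elements of $\mathcal{A}$ is by definition again a composition of $Sat(U_{i})$'s, hence in $\mathcal{A}$. For condition (3), I would verify that $Sat(U)^{-1} = Sat(U)$ (swap $x$ and $x'$ in the definition of $Sat$) and use the general identity $(e_{1} \circ \cdots \circ e_{n})^{-1} = e_{n}^{-1} \circ \cdots \circ e_{1}^{-1}$ so that $(Sat(U_{1}) \circ \cdots \circ Sat(U_{n}))^{-1} = Sat(U_{n}) \circ \cdots \circ Sat(U_{1}) \in \mathcal{A}$.

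Condition (2) is the step that deserves the most care, but is still routine. I would first note the monotonicity $U \subseteq V \Rightarrow Sat(U) \subseteq Sat(V)$, from which $Sat(U_{1}) \circ \cdots \circ Sat(U_{n}) \subseteq Sat(W)^{n}$ where $W = U_{1} \cup \cdots \cup U_{n}$ (still topologically bounded as a finite union of topologically bounded sets). Then, given two elements of $\mathcal{A}$ of lengths $n$ and $m$, I would set $W$ equal to $K$ together with the union of all the $U_{i}$'s and $V_{j}$'s from both, so that $W \supseteq K$ forces $\Delta X \subseteq Sat(W)$. Using $\Delta X \subseteq Sat(W)$ one gets $Sat(W)^{k} \subseteq Sat(W)^{k+1}$, and both elements are then contained in $Sat(W)^{\max(n,m)} \in \mathcal{A}$.

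With all four conditions verified and $\mathcal{A} \subseteq \varepsilon_{\varphi}$ established, the basis proposition directly yields that every $e \in \varepsilon_{\varphi}$ is contained in some $e' \in \mathcal{A}$, which is exactly the statement that $\mathcal{A}$ is a basis. I expect the main, if minor, obstacle is only bookkeeping in condition (2), namely remembering to enlarge $W$ to contain a fundamental domain so that shorter compositions embed into longer ones; everything else is a direct unpacking of the definition of $Sat$.
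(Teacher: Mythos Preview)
Your proposal is correct and follows essentially the same route as the paper: both invoke the basis proposition and verify the four closure conditions via $Sat(U)^{-1}=Sat(U)$, closure under composition by definition, $\Delta X\subseteq Sat(K)$ for a fundamental domain $K$, and absorption of a union into a power $Sat(W)^{\max(n,m)}$ of the saturation of the union $W$ of all the $U_i$ and $V_j$. The only cosmetic difference is that you explicitly enlarge $W$ by $K$ to force $\Delta X\subseteq Sat(W)$ before padding the shorter composition; the paper omits this, which is harmless since any $a$ appearing in $Sat(U_1)\circ\cdots\circ Sat(U_n)$ already lies in some $\varphi(g,U_1)\subseteq\varphi(g,W)$, so $(a,a)\in Sat(W)$ anyway.
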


\begin{proof}By the definition, $\mathcal{A}$ is closed under compositions. We have also that $(Sat(U_{1})\circ ...\circ Sat(U_{n}))^{-1} = Sat(U_{n})^{-1}\circ ...\circ Sat(U_{1})^{-1} = Sat(U_{n})\circ ...\circ Sat(U_{1}) \in \varepsilon_{\varphi}$. 

Since $\varphi$ is cocompact, there exists a fundamental domain $K$. Then $\Delta X \subseteq Sat(K)$.

Let $A,B \in \mathcal{A}$. So $A = Sat(U_{1})\circ ...\circ Sat(U_{n})$ and $B = Sat(V_{1})\circ ...\circ Sat(V_{m})$, for some $U_{1},...,U_{n},V_{1},...,V_{m}$ topologically bounded sets. Then $A \cup B \subseteq  Sat(U_{1}\cup...\cup U_{n}\cup V_{1}\cup...\cup V_{m})\circ ... \circ Sat(U_{1}\cup...\cup U_{n}\cup V_{1}\cup...\cup V_{m}) \in \mathcal{A}$ (product of $\max\{n,m\}$ copies of $Sat(U_{1}\cup...\cup U_{n}\cup V_{1}\cup...\cup V_{m})$).

Thus $\mathcal{A}$ is a basis for $\varepsilon_{\varphi}$.
\end{proof}

\begin{lema}Let $\varphi: G \curvearrowright X$ be a properly discontinuous action. Then for every finite family $B_{1},..., B_{n+1}$ of topologically bounded sets, the set $\{(g_{1},...,g_{n}) \in G^{n}: \forall i \in \{1,...,n-1\}, \varphi(g_{i},B_{i}) \cap \varphi(g_{i+1},B_{i+1}) \neq \emptyset, \varphi(g_{n},B_{n}) \cap B_{n+1} \neq \emptyset\}$ is finite.
\end{lema}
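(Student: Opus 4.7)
My plan is to prove this by induction on $n$, where the key use of proper discontinuity comes at the very bottom (the $n=1$ step) and the inductive step amounts to ``peeling off'' one variable at a time using the fact that translates of topologically bounded sets are again topologically bounded.

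For the base case $n=1$, the condition on $(g_1)$ reduces to $\varphi(g_1,B_1) \cap B_2 \neq \emptyset$ (the indexed set $\{1,\dots,n-1\}$ is empty). Since $B_1, B_2$ are topologically bounded, $K := Cl_X(B_1) \cup Cl_X(B_2)$ is compact, and any $g_1$ with $\varphi(g_1,B_1) \cap B_2 \neq \emptyset$ satisfies $\varphi(g_1,K)\cap K \neq \emptyset$; this set is finite by proper discontinuity of $\varphi$.

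For the inductive step, assume the claim holds for $n-1$ and fix topologically bounded sets $B_1,\dots,B_{n+1}$. The last condition $\varphi(g_n,B_n) \cap B_{n+1} \neq \emptyset$ restricts $g_n$, by the base case argument applied to the pair $(B_n, B_{n+1})$, to a finite set $F \subseteq G$. Fix any $g_n \in F$ and set $C := \varphi(g_n, B_n)$; because $\varphi(g_n, \cdot)$ is a homeomorphism, $C$ is topologically bounded. The remaining constraints on $(g_1,\dots,g_{n-1})$ are precisely $\varphi(g_i,B_i) \cap \varphi(g_{i+1},B_{i+1}) \neq \emptyset$ for $i \in \{1,\dots,n-2\}$ together with $\varphi(g_{n-1}, B_{n-1}) \cap C \neq \emptyset$. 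This is exactly the statement for $n-1$ applied to the family $B_1,\dots,B_{n-1},C$, so the inductive hypothesis gives finitely many valid tuples $(g_1,\dots,g_{n-1})$ for each such $g_n$. Summing over the finite set $F$ yields the result.

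I do not expect any serious obstacle here; the argument is a straightforward bookkeeping induction. The only subtlety worth noting is that topologically bounded is preserved under the action (since each $\varphi(g, \cdot)$ is a homeomorphism and compactness of the closure is preserved), which is what makes the $C := \varphi(g_n, B_n)$ reduction legal when reapplying the hypothesis.
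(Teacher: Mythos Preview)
Your proof is correct and follows essentially the same approach as the paper: induction on $n$, with the base case coming directly from proper discontinuity, and the inductive step fixing the last coordinate $g_{n}$ (finitely many choices) and applying the hypothesis to the family $B_{1},\dots,B_{n-1},\varphi(g_{n},B_{n})$. The only difference is cosmetic: you spell out the base case via the compact set $K = Cl_{X}(B_{1})\cup Cl_{X}(B_{2})$ and explicitly note that $\varphi(g_{n},B_{n})$ is topologically bounded, points the paper leaves implicit.
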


\begin{proof}If $n = 1$, this is just the definition of properly discontinuous actions.

Suppose the proposition holds for $n = k$. Let $B_{1},..., B_{k+2}$ be bounded sets and $A \! = \! \{(g_{1},...,g_{k+1}) \! \in G^{k+1}\!:\! \forall i \in \{1,...,k\}, \varphi(g_{i},B_{i}) \cap \varphi(g_{i+1},B_{i+1}) \neq \emptyset,$ $\varphi(g_{k+1},B_{k+1}) \cap B_{k+2} \neq \emptyset\}$ and $A_{i}$ the projection of $A$ to the $i$-th coordinate. We have that $A \!=\! \bigcup_{g_{k+1} \in A_{k+1}}  \{(g_{1},...,g_{k}) \!\in G^{k}:\! \forall i \!\in\! \{1,...,k-1\},$ $\varphi(g_{i},B_{i}) \cap \varphi(g_{i+1},B_{i+1}) \neq \emptyset, \varphi(g_{k},B_{k}) \cap \varphi(g_{k+1},B_{k+1}) \neq \emptyset\} \times \{g_{k+1}\}$. The choices of $g_{k+1}$ are finite since $\varphi$ is properly discontinuous and each factor of this union is finite because of the induction hypothesis. Thus $A$ is finite.
\end{proof}

\begin{prop}Let $G$ be a group, $X$ a Hausdorff locally compact paracompact space and $\varphi: G \curvearrowright X$ a properly discontinuous and cocompact action. Then $(X,\varepsilon_{\varphi})$ is a proper and coarsely connected coarse space.
\end{prop}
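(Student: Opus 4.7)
The plan is to verify the three conditions that together mean $(X, \varepsilon_\varphi)$ is proper and coarsely connected: coarse connectedness, the existence of a neighbourhood of $\Delta X$ inside $\varepsilon_\varphi$, and the fact that every bounded subset of $X$ is topologically bounded. The first two should follow quickly from cocompactness and local compactness, while the third is where the preceding lemma is actually needed.

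For coarse connectedness, I would fix a compact fundamental domain $K$ of $\varphi$. Given $(x, y) \in X \times X$, write $x = \varphi(g, k)$ for some $g \in G$ and $k \in K$, and set $A = \{k, \varphi(g^{-1}, y)\}$, a finite and hence topologically bounded set. Then $(x, y) = (\varphi(g, k), \varphi(g, \varphi(g^{-1}, y))) \in Sat(A) \in \varepsilon_\varphi$, as required.

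For the neighbourhood of $\Delta X$, I would use local compactness to cover $K$ by finitely many open sets with compact closures; let $U$ be their union, so $U \supseteq K$ is open and $\overline{U}$ is compact. Define $V = \bigcup_{g \in G} \varphi(g, U) \times \varphi(g, U)$, which is open in $X \times X$ and contains $\Delta X$, since every $x \in X$ lies in some $\varphi(g, K) \subseteq \varphi(g, U)$. By construction $V \subseteq Sat(\overline{U}) \in \varepsilon_\varphi$, and closure under subsets gives $V \in \varepsilon_\varphi$.

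The main work is the last condition, and I expect it to be the principal obstacle. Assume $B \times B \in \varepsilon_\varphi$. By the basis proposition just proved, there exist topologically bounded $U_1, \ldots, U_n$ with $B \times B \subseteq Sat(U_1) \circ \cdots \circ Sat(U_n)$. Fix any $b \in B$. Unfolding the composition, each $b' \in B$ yields a chain $b = x_0, x_1, \ldots, x_n = b'$ and elements $g_1, \ldots, g_n \in G$ such that $x_{i-1}, x_i \in \varphi(g_i, U_{\pi(i)})$, for the permutation $\pi$ determined by the composition convention. This produces the conditions $b \in \varphi(g_1, U_{\pi(1)})$, the sliding intersections $\varphi(g_i, U_{\pi(i)}) \cap \varphi(g_{i+1}, U_{\pi(i+1)}) \neq \emptyset$ for $i = 1, \ldots, n-1$, and $b' \in \varphi(g_n, U_{\pi(n)})$. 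After reversing the chain so that the singleton $\{b\}$ plays the role of the terminal set $B_{n+1}$, the previous lemma forces the tuple $(g_1, \ldots, g_n)$ into a finite set $F$ depending only on $b$ and the $U_i$. Consequently $B \subseteq \bigcup_{(g_1, \ldots, g_n) \in F} \varphi(g_n, U_{\pi(n)})$, and passing to closures puts $\overline{B}$ inside a finite union of compact sets $\varphi(g_n, \overline{U_{\pi(n)}})$, so $B$ is topologically bounded. The only delicate point is the bookkeeping to match the chain obtained from the composition with the hypotheses of the lemma; once that is set up correctly, finiteness is immediate.
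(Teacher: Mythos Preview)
Your proof is correct and follows essentially the same approach as the paper's: verify coarse connectedness via the saturation of a two-point set, build an open neighbourhood of $\Delta X$ from the saturation of a relatively compact open set containing a fundamental domain, and deduce that bounded sets are topologically bounded by unwinding the basis description and invoking the preceding finiteness lemma on tuples $(g_1,\ldots,g_n)$. The only cosmetic difference is that the paper observes $(x,y)\in Sat(\{x,y\})$ directly for coarse connectedness, bypassing your detour through the fundamental domain, and it works with $B\times\{b\}$ rather than $B\times B$ in the last step.
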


\begin{proof}Let $x,y \in X$. Then $(x,y) \in Sat(\{x,y\})$, which implies that $X$ is coarsely connected.

Since $\varphi$ is cocompact and $X$ is locally compact, there is an open set $U$ that is topologically bounded and contains a fundamental domain $K$. Let $x \in X$. there exists $x' \in U$ and $g \in G$ such that $\varphi(g,x') = x$. Then $(x,x) = (\varphi(g,x'),\varphi(g,x')) \in Sat(U)$. We also have that $Sat(U)$ is open, since it is the union of the open sets of the form $(\varphi(g,\_)\times \varphi(g,\_))(U\times U)$. Thus $Sat(U)$ is a neighbourhood of $\Delta X$ that is an element of $\varepsilon_{\varphi}$.

Let $B$ be a bounded set in $X$. Then there exists $b \in X$ such that $B\times \{b\} \in \varepsilon_{\varphi}$. We have also that there exists $B_{1},...,B_{n}$ topologically bounded subsets of $X$ such that $B\times \{b\} \subseteq Sat(B_{n}) \circ...\circ Sat(B_{1})$. Since $B_{1},...B_{n}$ are topologically bounded and $\varphi$ is properly discontinuous, the set $A = \{(g_{1},...,g_{n}) \in G^{n}: \forall i \in \{1,...,n-1\}, \varphi(g_{i},B_{i}) \cap \varphi(g_{i+1},B_{i+1}) \neq \emptyset, b \in \varphi(g_{n},B_{n})\}$ is finite. If $(a,b) \in B\times \{b\}$, then there exists $(g_{1},g_{2},...,g_{n}) \in G^{n}$ and $x_{i},y_{i} \in B_{i}$ such that $\forall i \in \{1,...,n\}$, $(\varphi(g_{i},x_{i} ),\varphi(g_{i},y_{i})) \in Sat(B_{i})$, $a = \varphi(g_{1},x_{1})$, $\forall i \in \{1,...,n-1\}$, $\varphi(g_{i},y_{i}) = \varphi(g_{i+1},x_{i+1})$ and $\varphi(g_{n},y_{n}) = b$. So for every index $i \in \{1,...,n-1\}$, $\varphi(g_{i},B_{i}) \cap \varphi(g_{i+1},B_{i+1}) \neq \emptyset$ and $b \in \varphi(g_{n},B_{n})$, which implies that $(g_{1},g_{2},...,g_{n}) \in A$. Then $B \subseteq  \bigcup\{\varphi(g_{1},B_{1}): \exists (g_{1},g_{2},...,g_{n}) \in A\}$, which is topologically bounded since it is a finite union of topologically bounded sets.

Thus $(X,\varepsilon_{\varphi})$ is proper.
\end{proof}

\begin{prop}\label{milnorsvarccoarse}Let $G$ be a group, $X$ a Hausdorff locally compact paracompact space and $\varphi: G \curvearrowright X$ a properly discontinuous action. If $x_{0} \in X$, then the map $\varphi_{x_{0}}: (G, \varepsilon_{G}) \rightarrow (X, \varepsilon_{\varphi})$ such that $\varphi_{x_{0}}(g) = \varphi(g,x_{0})$ is a coarse embedding. Moreover, if $\varphi$ is cocompact, then $\varphi_{x_{0}}$ is a coarse equivalence.
\end{prop}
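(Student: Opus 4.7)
My plan is to verify the three defining properties of a coarse embedding directly, and then to upgrade to a coarse equivalence in the cocompact case via \textbf{Corollary \ref{quasidense}}.

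For the bornologous property, by \textbf{Propositions \ref{coarsecomposition}} and \textbf{\ref{coarsebasis}} it suffices to check $\varphi_{x_{0}}$ on the generators $Sat(F)$ of the basis of $\varepsilon_{G}$, with $F \subseteq G$ finite. A direct unfolding of definitions gives $\varphi_{x_{0}}(Sat(F)) = Sat(\varphi_{x_{0}}(F))$; since $\varphi_{x_{0}}(F)$ is finite, hence topologically bounded in $X$, this set lies in $\varepsilon_{\varphi}$. For properness, if $B \subseteq X$ is bounded then \textbf{Proposition \ref{propercontrolled}} forces $B$ to be topologically bounded, so $K = Cl_{X}(B) \cup \{x_{0}\}$ is compact; proper discontinuity of $\varphi$ makes $\{g \in G: \varphi(g,K) \cap K \neq \emptyset\}$ finite, and this set contains $\varphi_{x_{0}}^{-1}(B)$. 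Since every finite $F' \subseteq G$ satisfies $F'\times F' \subseteq Sat(F') \in \varepsilon_{G}$, finite sets are bounded in $\varepsilon_{G}$, which completes properness.

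To upgrade to a coarse embedding, I construct a quasi-inverse $\pi: \varphi_{x_{0}}(G) \to G$ on the image by picking, for each $y \in \varphi_{x_{0}}(G)$, some $\pi(y) \in G$ with $\varphi_{x_{0}}(\pi(y)) = y$. Then $\varphi_{x_{0}} \circ \pi = id_{\varphi_{x_{0}}(G)}$ by construction, and for each $g \in G$ the element $g^{-1}\pi(\varphi_{x_{0}}(g))$ lies in the finite stabilizer $Stab_{\varphi}(x_{0})$, so $\pi \circ \varphi_{x_{0}}$ is close to $id_{G}$. To check bornologousness of $\pi$, I take a generator $Sat(B)$ of the basis of $\varepsilon_{\varphi}$ with $B \subseteq X$ topologically bounded. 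Any pair $(\varphi(g,a), \varphi(g,a')) \in Sat(B) \cap \varphi_{x_{0}}(G)^{2}$ with $a, a' \in B$ yields $s = g^{-1}\pi(\varphi(g,a))$ and $s' = g^{-1}\pi(\varphi(g,a'))$ both in $S := \{k \in G: \varphi(k,x_{0}) \in Cl_{X}(B)\}$, which is finite by proper discontinuity applied to $K = Cl_{X}(B) \cup \{x_{0}\}$. Hence $(\pi(\varphi(g,a)), \pi(\varphi(g,a'))) = (gs, gs') \in Sat(S) \in \varepsilon_{G}$. Properness of $\pi$ follows since $\pi^{-1}(F) \subseteq \varphi_{x_{0}}(F)$ is finite whenever $F \subseteq G$ is finite.

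Finally, for the cocompact case I invoke \textbf{Corollary \ref{quasidense}}: if $K$ is a fundamental domain for $\varphi$, then every $x \in X$ can be written as $\varphi(g,k)$ with $k \in K$, so $(x,\varphi_{x_{0}}(g)) \in Sat(K \cup \{x_{0}\}) \in \varepsilon_{\varphi}$, showing that $\varphi_{x_{0}}(G)$ is quasi-dense in $X$. The main technical point throughout is the bornologous property of $\pi$: one must unwind the composition of saturations that defines a general element of $\varepsilon_{\varphi}$ and repeatedly apply proper discontinuity to bound the ambiguity created by the arbitrary choices in defining $\pi$.
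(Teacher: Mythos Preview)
Your proof is correct and follows essentially the same strategy as the paper: verify that $\varphi_{x_{0}}$ is proper and bornologous, construct a section $\pi$ on the orbit and check it is a coarse quasi-inverse, then invoke \textbf{Corollary \ref{quasidense}} via quasi-density of the orbit in the cocompact case. Your identity $\varphi_{x_{0}}(Sat(F)) = Sat(\varphi_{x_{0}}(F))$ is a slightly slicker packaging of the bornologous step than the paper's decomposition of $Sat(U)$ into diagonals $\Delta_{h^{-1}h'}$, but the overall architecture and level of detail match the paper's argument.
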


\begin{obs} This is a version of the Milnor-$\check{S}$varc Lemma for coarse spaces.
\end{obs}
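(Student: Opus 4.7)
The final statement is an unnumbered Observation reading ``This is a version of the Milnor-$\check{S}$varc Lemma for coarse spaces.'' Strictly speaking, this is a meta-remark pointing out that the preceding \textbf{Proposition \ref{milnorsvarccoarse}} is the coarse-category translation of a classical result, rather than a mathematical assertion with independent logical content. Consequently there is nothing to prove in the usual sense: the ``plan'' reduces to exhibiting the parallel with the classical statement.

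To make that parallel explicit, one would recall that the classical Milnor-$\check{S}$varc Lemma asserts that if a group $G$ acts properly discontinuously, cocompactly, and by isometries on a proper geodesic metric space $(X,d)$, then $G$ is finitely generated and, for any $x_{0}\in X$, the orbit map $g\mapsto \varphi(g,x_{0})$ is a quasi-isometry from $G$ (with any word metric) onto $X$. \textbf{Proposition \ref{milnorsvarccoarse}} substitutes ``locally compact paracompact Hausdorff space with a properly discontinuous cocompact action'' for the metric/isometric hypotheses, substitutes the canonical coarse structures $\varepsilon_{G}$ and $\varepsilon_{\varphi}$ of \textbf{Definition \ref{coarseactionstructure}} for the bounded coarse structures of the word metric and of $d$, and weakens the conclusion ``quasi-isometry'' to ``coarse equivalence''. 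With these substitutions the statement becomes exactly the coarse-geometric analogue of the classical lemma, and the Observation merely flags this analogy.

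Since no new claim is being made, no obstacle arises. If one nevertheless wished a supplementary check that the classical result falls out as a special case, it would suffice to verify that when $X$ is a proper geodesic space and the action is by isometries, $\varepsilon_{\varphi}$ coincides with the bounded coarse structure $\varepsilon_{d}$, and that $\varepsilon_{G}$ coincides with the bounded coarse structure of any word metric on a finitely generated $G$. Both reductions are routine and do not belong to the proof of the Observation itself, which asserts nothing beyond the analogy it names.
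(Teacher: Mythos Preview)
Your assessment is correct: the Observation is a meta-remark with no independent mathematical content, and the paper offers no proof for it either—it simply states the analogy and moves on to the proof of \textbf{Proposition \ref{milnorsvarccoarse}}. Your explanation of the parallel with the classical Milnor-\v{S}varc Lemma is accurate and, if anything, more detailed than what the paper provides.
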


\begin{proof}Let $S$ be a bounded  set on the space $X$. We have that the set $\varphi_{x_{0}}^{-1}(S) = \{g \in G: \varphi(g,x_{0}) \in S\}$ is finite because $\varphi$ is properly discontinuous. Then $\varphi_{x_{0}}$ is proper. 

Let $e \in \varepsilon_{G}$. Let, for $g \in G$, $\Delta_{g} = \{(h,hg): h \in G\}$ and suppose that $e = \Delta_{g}$. Then $\varphi_{x_{0}}(e) = \{(\varphi(h,x_{0}),\varphi(h,\varphi(g,x_{0}))): h \in G\} \subseteq Sat(\{x_{0},\varphi(g,x_{0})\}) \in \varepsilon_{\varphi}$. Suppose that $e = Sat(U)$, with $U$ a finite subset of $G$. Then $Sat(U) = \{(gh,gh'): g \in G, h,h' \in U\} = \{(g',g'h^{-1}h'): g' \in G, h,h' \in U\} = \bigcup_{h,h' \in U} \Delta_{h^{-1}h'}$, which is a finite union since $U$ is finite. Then $\varphi_{x_{0}}(Sat(U)) = \bigcup_{h,h' \in U} \varphi_{x_{0}}(\Delta_{h^{-1}h'}) \subseteq Sat(\{x_{0}, \varphi(h^{-1}h',x_{0}): h,h' \in U\}) \in \varepsilon_{\varphi}$. By \textbf{Propositions \ref{coarsecomposition}} and \textbf{\ref{coarsebasis}}, it follows that $\varphi_{x_{0}}$ is bornologous.

Let $f: Orb_{\varphi}x_{0} \rightarrow G$ be a map such that $f(x)$ chooses an element in $\varphi^{-1}_{x_{0}}(x)$. 
Let $U$ be a finite set of $G$. Then $f^{-1}(U) \subseteq \varphi^{-1}_{x_{0}}(U)$, which is finite because $\varphi$ is properly discontinuous.

Let $e = Sat(U)$, where $U$ is a topologically bounded subset of $Orb_{\varphi}x_{0}$ (this means that $U$ is finite). Let $S = \{s \in G:  \varphi(s,x_{0}) \in U\}$. Since $U$ is finite and $\varphi$ is properly discontinuous, then $S$ is also finite. Then we have:

$$f(e) = \{(f(\varphi(g,x)),f(\varphi(g,x'))): g \in G, x,x' \in U\} \subseteq$$ $$\bigcup_{x,x' \in U} \bigcup_{g \in G}\{\varphi_{x_{0}}^{-1}(\varphi(g,x))\times \varphi_{x_{0}}^{-1}(\varphi(g,x'))\} =$$ 
$$\bigcup_{s,s' \in S} \bigcup_{g \in G}\{\varphi_{x_{0}}^{-1}(\varphi(gs,x_{0}))\times \varphi_{x_{0}}^{-1}(\varphi(gs',x_{0}))\} =$$ 
$$\bigcup_{s,s' \in S} \bigcup_{g \in G} (gs Stab \ x_{0})\times (gs'Stab \ x_{0}) = \bigcup_{s,s' \in S} \bigcup_{g \in G} \bigcup_{h \in Stab \ x_{0}} \{gs h\}\times \{gs'h\} =$$ 
$$\bigcup_{s,s' \in S} \bigcup_{g' \in G} \bigcup_{h \in Stab \ x_{0}} \{g'\}\times \{g'h^{-1}s^{-1}s'h\} =  \bigcup_{s,s' \in S} \bigcup_{h \in Stab \ x_{0}} \Delta_{h^{-1}s^{-1}s'h}$$

Which is an element of $\varepsilon_{G}$, since this is a finite union. So $f$ is bornologous.

We have that $\varphi_{x_{0}} \circ f = id_{Orb_{\varphi}x_{0}}$, which implies that $id_{Orb_{\varphi}x_{0}}$ is close to $\varphi_{x_{0}} \circ f$, and $\{(g,f(\varphi_{x_{0}}(g))): g \in G\} \subseteq \{(g,\varphi_{x_{0}}^{-1}(\varphi_{x_{0}}(g))): g \in G\} = \{(g,g\varphi_{x_{0}}^{-1}(\varphi_{x_{0}}(1))): g \in G\} = \bigcup_{h \in \varphi_{x_{0}}^{-1}(\varphi_{x_{0}}(1))} \Delta_{h} \in \varepsilon_{G}$, which implies that $id_{G}$ is close to $f \circ \varphi_{x_{0}}$. Thus $\varphi_{x_{0}}$ is a coarse embedding.

Suppose that $\varphi$ is cocompact and let $K \subseteq X$ be a fundamental domain. We have that $\B(Orb_{\varphi}x_{0},Sat(K)) = \{a \in X: \exists b \in Orb_{\varphi}x_{0}: (a,b) \in Sat(K)\} = \{a \in X: \exists b \in Orb_{\varphi}x_{0}: (a,b) = (\varphi(g,x),\varphi(g,x')): g \in G, x,x' \in K\} = \{\varphi(g,x) \in X: x \in K, g \in G, \exists x' \in K: \varphi(g,x') \in Orb_{\varphi}x_{0}\} = \{\varphi(g,x) \in X: x \in K, g \in G\} = X$. Then $Orb_{\varphi}x_{0}$ is $Sat(K)$-quasi-dense, which implies that $\varphi_{x_{0}}$ is a coarse equivalence by \textbf{Corollary \ref{quasidense}}.
\end{proof}

\begin{prop}\label{coarsegroup} Let $G$ be a group, $X$ a Hausdorff locally compact paracompact space, $\varphi: G \curvearrowright X$ a properly discontinuous and cocompact action and $X+_{f}W \in Comp(X)$ such that $\varphi$ extends continuously to an action on $X+_{f}W$. Then $X+_{f}W \in EPers_{0}(\varphi)$ if and only if $X+_{f}W \in Pers(\varepsilon_{\varphi})$.
\end{prop}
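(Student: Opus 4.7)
The plan is to prove the two implications separately, each time reducing to the ``atomic'' perspective sets $Sat(K)$ for $K \subseteq X$ compact. Indeed, $\varepsilon_{f}$ is itself a coarse structure (closed under subsets, unions, compositions, and inverses), and by the preceding proposition the sets $Sat(U_{1}) \circ \cdots \circ Sat(U_{n})$, with each $U_{i}$ topologically bounded, form a basis for $\varepsilon_{\varphi}$. Hence $\varepsilon_{\varphi} \subseteq \varepsilon_{f}$ is equivalent to $Sat(K) \in \varepsilon_{f}$ for every compact $K \subseteq X$, and this is the statement I will establish from each perspectivity condition.

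For ($\Rightarrow$), assume $X+_{f}W \in EPers_{0}(\varphi)$ and fix a compact $K$. Properness of $Sat(K)$ is immediate from proper discontinuity: if $B$ is topologically bounded then $\B(B, Sat(K)) = \bigcup \{\varphi(g,K): \varphi(g,K) \cap B \neq \emptyset\}$ is a finite union of compact sets. For the remaining perspectivity condition, take a net $(a_{\gamma}, b_{\gamma}) \in Sat(K)$ with $a_{\gamma} \to a \in \partial X$, and write $a_{\gamma} = \varphi(g_{\gamma}, x_{\gamma})$, $b_{\gamma} = \varphi(g_{\gamma}, y_{\gamma})$ with $x_{\gamma}, y_{\gamma} \in K$. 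Given a neighborhood $V$ of $a$ in $W$, pick entourages $u \in \mathcal{U}$ with $u[a] \subseteq V$ and $u' \in \mathcal{U}$ with $u' \circ u' \subseteq u$. By group theoretic perspectivity, the set $F = \{g \in G: \varphi(g, K) \notin Small(u')\}$ is finite, and $\bigcup_{g \in F}\varphi(g, K)$ is compact in $X$, hence disjoint from a neighborhood of $a \in \partial X$. So eventually $g_{\gamma} \notin F$ and $(a_{\gamma}, b_{\gamma}) \in u'$; combined with $(a, a_{\gamma}) \in u'$ eventually, this yields $(a, b_{\gamma}) \in u$, hence $b_{\gamma} \in V$.

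For ($\Leftarrow$), assume $Sat(K) \in \varepsilon_{f}$ for every compact $K$, and fix $u \in \mathcal{U}$ and $K$ compact; since $W$ is compact Hausdorff we may take $u$ to be an open entourage containing $\Delta W$. Suppose for contradiction there are infinitely many distinct $g_{n} \in G$ with $\varphi(g_{n}, K) \notin Small(u)$, and pick witnesses $(a_{n}, b_{n}) \in (\varphi(g_{n}, K))^{2} \setminus u$. Proper discontinuity forces $\varphi(g_{n}, K)$ to escape every compact subset of $X$, so, passing to a subnet, $a_{n} \to a \in \partial X$ and $b_{n} \to b \in \partial X$. Since $W^{2} \setminus u$ is closed, $(a, b) \notin u$, and in particular $a \neq b$ because $\Delta W \subseteq u$; but perspectivity of $Sat(K)$ forces $b_{n} \to a$, so $b = a$, a contradiction. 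The subtle point in both directions is the need to handle the abstract uniformity $\mathcal{U}$ correctly: in the forward direction the triangle-type relation $u' \circ u' \subseteq u$ is essential to combine the smallness of $\varphi(g_{\gamma}, K)$ with the convergence of $a_{\gamma}$, while in the reverse direction one must restrict to open entourages containing $\Delta W$ to guarantee that the limit pair lies off the diagonal.
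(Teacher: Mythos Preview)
Your argument is correct, but it takes a different route from the paper. The paper exploits the equivalent \emph{neighbourhood} characterisations on both sides: characterisation (3) of perspective sets ($e \cap (U \times (X-V)) = \emptyset$) and Proposition 3.1 of \cite{So2} (for every $y \in W$ and neighbourhood $V$ there is $U \subseteq V$ with $\varphi(g,K)\cap U \neq \emptyset \Rightarrow \varphi(g,K)\subseteq V$). With these reformulations in hand, the two conditions applied to $Sat(K)$ are literally the same sentence, so each implication is a one-line unpacking. Your proof instead uses characterisation (2) (nets) for $\varepsilon_{f}$ and the original entourage definition of $EPers_{0}(\varphi)$; this forces you to run a genuine net argument in the forward direction (with the $u' \circ u' \subseteq u$ refinement) and a compactness/contradiction argument in the reverse direction. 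Both approaches are sound; the paper's is shorter because it front-loads the work into the two equivalent-characterisation propositions, while yours is more self-contained at the cost of handling the uniformity explicitly. One cosmetic slip: where you write ``since $W$ is compact Hausdorff'' you mean the full compactification $X+_{f}W$, not the boundary $W$.
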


\begin{obs}In another words: $X+_{f}W$ has the group theoretic perspectivity property with respect to the action $\varphi$ if and only if it has the coarse geometric perspectivity property with respect to the coarse structure $\varepsilon_{\varphi}$.
\end{obs}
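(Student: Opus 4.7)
The plan is to reduce both implications to the basis $\{Sat(A) : A \text{ topologically bounded}\}$ of $\varepsilon_\varphi$, and further to $A = K$ compact via the inclusion $Sat(A) \subseteq Sat(\overline{A})$. Since $\varepsilon_f$ is a coarse structure, the inclusion $\varepsilon_\varphi \subseteq \varepsilon_f$ will follow once each $Sat(K)$ for $K \subseteq X$ compact is shown to be a perspective subset of $X \times X$; the group-theoretic property is already quantified over compact subsets, so both formulations can be compared on the same objects. Throughout, $\mathcal{U}$ denotes the unique uniformity on the compactum $X +_f W$, and I use that $\bigcap_{u \in \mathcal{U}} u = \Delta(X +_f W)$ and that the closed entourages form a base of $\mathcal{U}$.

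\textbf{Forward direction.} Assuming group-theoretic perspectivity, fix $K \subseteq X$ compact. For properness of $Sat(K)$: given $B$ topologically bounded, $\B(B, Sat(K)) \subseteq \bigcup_{g \in F} \varphi(g, K)$, where $F = \{g \in G : \varphi(g, K) \cap \overline{B} \neq \emptyset\}$ is finite by applying proper discontinuity to the compact set $K \cup \overline{B}$; the same bound works for $Sat(K)^{-1}$. For the boundary condition, take a net $(\varphi(g_\gamma, x_\gamma), \varphi(g_\gamma, y_\gamma)) \in Sat(K)$ converging in $(X +_f W)^2$ to $(p, q)$ with $p \in W$. If $\{g_\gamma\}$ stays in a finite subset of $G$, a constant subnet $g_\gamma \equiv g$ forces $p$ to lie in the compact set $\varphi(g, K) \subseteq X$, contradicting $p \in W$. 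Otherwise $\{g_\gamma\}$ leaves every finite subset of $G$, and by the group-theoretic condition $\varphi(g_\gamma, K) \in Small(u)$ eventually for every $u \in \mathcal{U}$; hence $(p, q) \in \bigcap_u u = \Delta(X +_f W)$, giving $p = q \in W$, as required.

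\textbf{Backward direction.} Assume $X +_f W \in Pers(\varepsilon_\varphi)$, so each $Sat(K)$ is perspective; suppose for contradiction that group-theoretic perspectivity fails. Then there exist a compact $K$ and an entourage $u \in \mathcal{U}$, which I may take closed since closed entourages form a base, together with infinitely many distinct $g_n \in G$ satisfying $\varphi(g_n, K) \notin Small(u)$; choose $x_n, y_n \in K$ with $(\varphi(g_n, x_n), \varphi(g_n, y_n)) \notin u$. Extract a subnet so that $\varphi(g_n, x_n) \to p$ and $\varphi(g_n, y_n) \to q$ in $X +_f W$. Since $u$ is closed and $\Delta(X +_f W) \subseteq u$, the limit $(p, q) \notin u$ satisfies $p \neq q$. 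Distinctness of the $g_n$ together with proper discontinuity forces $\varphi(g_n, K)$ to eventually leave every compact subset of $X$, so $p, q \in W$. Then $(p, q)$ is a limit of points of $Sat(K)$ in $W \times W$ with $p \neq q$, contradicting perspectivity of $Sat(K)$. The main difficulty is really just matching the two formulations: both express that translates of compacts shrink at infinity, one in the uniformity of the compactum and the other in the perspective entourages of the coarse structure, and the reduction $Sat(A) \subseteq Sat(\overline{A})$ is what lets me compare them on common ground.
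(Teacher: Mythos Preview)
Your overall strategy---reduce both directions to checking that $Sat(K)\in\varepsilon_f$ for every compact $K$---is exactly the reduction the paper makes. Where you diverge is in \emph{which} of the three equivalent descriptions of a perspective entourage you verify: the paper uses the neighbourhood formulation (condition~3) together with the neighbourhood reformulation of group-theoretic perspectivity (Proposition~3.1 of \cite{So2}), and since those two statements are nearly verbatim translations of one another the proof collapses to a two-line unpacking in each direction. You instead work through the closure/net description (conditions~1--2) and the original uniformity definition, which costs you the compactness/subnet extraction and the analysis of whether $\{g_\gamma\}$ stays finite. Both routes are valid; the paper's is shorter precisely because it chose the pair of formulations that already look alike.

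There is one genuine slip in your backward direction. You write that you may take the entourage $u$ \emph{closed}, and then conclude that the limit $(p,q)$ of a net lying outside $u$ still lies outside $u$. That inference goes the wrong way: if $u$ is closed then $u^{c}$ is open, and limits of nets in an open set need not remain there. What you want is to take $u$ \emph{open} (open entourages also form a base, and shrinking $u$ only makes the set of bad $g$ larger), so that $u^{c}$ is closed and $(p,q)\in u^{c}$ follows; then $\Delta\subseteq u$ gives $p\neq q$. Alternatively, keep $u$ arbitrary and argue that $\Delta$ lies in the \emph{interior} of every entourage, so $p=q$ would force the net eventually into $u$. Either fix is immediate, but as written the step fails.
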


\begin{proof}$(\Rightarrow)$ Since every element of $\varepsilon_{\varphi}$ is proper and $\varepsilon_{\varphi}$ is generated by the set of saturations of compact subsets of $X$, it is sufficient to prove that every saturation of a compact set is in $\varepsilon_{f}$. Let $K$ be a compact subset of $X$, $y \in W$ and $V$ a neighbourhood of $y$. There is a neighbourhood $U$ of $y$ such that $U \subseteq V$ and if $g \in G$ such that $\varphi(g,K) \cap U \neq \emptyset$, then $\varphi(g,K) \subseteq V$. So the set $Sat(K) \cap (U\times (X-V)) = \{(\varphi(g,x),\varphi(g,x')): g \in G, x,x' \in K, \varphi(g,x) \in U, \varphi(g,x) \in X-V \} = \emptyset$, by the definition of $U$. Thus $Sat(K) \in \varepsilon_{f}$.

$(\Leftarrow)$ Now we have that $\varepsilon_{\varphi} \subseteq \varepsilon_{f}$. Let $K$ be a compact subset of $X$, $y \in W$ and $V$ a neighbourhood of $y$. Since $Sat(K) \in \varepsilon_{f}$, there exists a neighbourhood $U$ of $y$ such that $U \subseteq V$ and $Sat(K) \cap (U\times (X-V)) = \emptyset$. Let $g \in G$ such that $\varphi(g,K) \cap U \neq \emptyset$. We have that there is no pair $(\varphi(g,x),\varphi(g,x')) \in U\times (X-V)$ such that $x,x' \in K$, which implies that $\varphi(g,K) \subseteq V$. Thus $X+_{f}W \in EPers_{0}(\varphi)$.
\end{proof}

Then $EPers_{0}(\varphi)$ is the subcategory of $Pers(\varepsilon_{\varphi})$ whose objects are spaces where the action $\varphi$ extends continuously to the whole space and the morphisms are equivariant maps.

\begin{prop}\label{differentperspectivities}Suppose that $G$ is a countable group and $X$ has a countable basis. Let $\Lambda: EMPers_{0}(\varphi) \rightarrow EMPers_{0}(G)$ be the functor given by the action $\varphi$ and $\Lambda': MPers(\varepsilon_{\varphi}) \rightarrow MPers(\varepsilon_{G})$ be the functor defined by the pullback of $\varphi_{x_{0}}$, for any $x_{0} \in X$. Then $\Lambda'|_{EMPers_{0}(\varphi)} = \Lambda$.
\end{prop}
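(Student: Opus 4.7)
The plan is to verify $\Lambda = \Lambda'$ on objects; on a morphism $id_X + \phi : X+_{f_1} Y_1 \to X+_{f_2} Y_2$ both functors produce a morphism of the form $id_G + \phi$ with the same $\phi$-component, so the object case implies the morphism case. Unwinding the definitions, $\Lambda(X+_f Y) = G+_{f_\Lambda} Y$ with $f_\Lambda(F) = f(\varphi(F,K))$ for any fundamental domain $K$ of $\varphi$ (independence of $K$ being noted in the excerpt right after \textbf{Definition \ref{pielambda}}), while $\Lambda'(X+_f Y) = G+_{f^*} Y$ for the pullback admissible map $f^*(F) = Cl_Y(f(Cl_X(\varphi_{x_0}(F))))$ coming from $\varphi_{x_0}: G \to X$ and $id_Y: Y \to Y$. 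So the task reduces to showing $f_\Lambda(F) = f^*(F)$ for every $F \subseteq G$.

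I would first simplify $f^*$. Because $\varphi$ is properly discontinuous, the orbit $\varphi_{x_0}(F) = \{\varphi(g,x_0):g\in F\}$ has no accumulation point in any compact subset of $X$, so it is closed and discrete in $X$. Since $f$ already takes values in $Closed(Y)$, both closures in the pullback formula become superfluous and $f^*(F) = f(\varphi_{x_0}(F))$. A similar local-finiteness argument (only finitely many translates of the compact $K$ can meet a given compact set, by proper discontinuity) shows that $\varphi(F,K) = \bigcup_{g \in F}\varphi(g,K)$ is closed in $X$, so the expression $f(\varphi(F,K))$ appearing in $f_\Lambda(F)$ makes literal sense.

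The heart of the proof is to choose $K$ so that $x_0 \in K$, which is always possible (replace any fundamental domain $K_0$ by $K_0 \cup \{x_0\}$) and does not alter $f_\Lambda$. Then $\varphi_{x_0}(F) \subseteq \varphi(F,K)$; conversely, for every $\varphi(g,k) \in \varphi(F,K)$ one has $(\varphi(g,k), \varphi(g,x_0)) \in Sat(K)$ with $\varphi(g,x_0) \in \varphi_{x_0}(F)$, so $\varphi(F,K) \subseteq \B(\varphi_{x_0}(F), Sat(K))$. Since $X+_f Y \in EMPers_0(\varphi)$ sits inside $Pers(\varepsilon_\varphi)$ by \textbf{Proposition \ref{coarsegroup}} and $Sat(K) \in \varepsilon_\varphi$, applying \textbf{Proposition \ref{closesameboundary}} to both inclusions yields
$$Cl_{X+_f Y}(\varphi(F,K)) \cap Y \;=\; Cl_{X+_f Y}(\varphi_{x_0}(F)) \cap Y.$$
Combined with the elementary Artin--Wraith identity $f(A) = Cl_{X+_f Y}(A) \cap Y$, valid for any $A \in Closed(X)$ because $A \cup f(A)$ is the smallest closed subset of $X+_f Y$ containing $A$, this gives $f(\varphi(F,K)) = f(\varphi_{x_0}(F))$, i.e.\ $f_\Lambda = f^*$, finishing the argument.

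The only mildly delicate point is keeping the two nested closures in the pullback formula straight and matching them to \textbf{Proposition \ref{closesameboundary}}; the rest is a direct assembly of proper discontinuity with the Artin--Wraith formalism, so I anticipate no further obstacle.
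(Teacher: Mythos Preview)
Your argument is correct, but it takes a different route than the paper. After making the same preliminary simplifications (dropping the closures by proper discontinuity and arranging $x_{0}\in K$), the paper only records the \emph{single} inclusion $f^{\ast}(F)=f(\varphi(F,x_{0}))\subseteq f(\varphi(F,K))=f_{\Lambda}(F)$, obtained from $\varphi_{x_{0}}(F)\subseteq\varphi(F,K)$ and monotonicity of $f$. It then invokes \textbf{Proposition~\ref{continuousidentity}} to get a continuous bijection $id:G+_{f^{\ast}}W\to G+_{f_{\Lambda}}W$; since both targets of the two functors are already known to be compact Hausdorff, this bijection is a homeomorphism, forcing $f^{\ast}=f_{\Lambda}$. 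Your approach instead proves \emph{both} inclusions directly: you combine $\varphi(F,K)\subseteq\B(\varphi_{x_{0}}(F),Sat(K))$ with \textbf{Proposition~\ref{closesameboundary}} (using $Sat(K)\in\varepsilon_{\varphi}$ and \textbf{Proposition~\ref{coarsegroup}}) and the identity $f(A)=Cl_{X+_{f}W}(A)\cap W$ to obtain $f_{\Lambda}\subseteq f^{\ast}$ as well. The paper's trick is shorter and avoids appealing to the coarse geometry of $\varepsilon_{\varphi}$, at the cost of relying on the fact that the outputs of both functors are Hausdorff; your argument is more self-contained and makes the geometric reason for the reverse inclusion explicit.
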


\begin{proof}Let $K \subseteq X$ be a fundamental domain of $\varphi$. Let $X+_{f}W \in EMPers_{0}(\varphi)$. We have that $\Lambda(X+_{f}W) = G+_{f_{\Lambda}}W$, where $f_{\Lambda}(F) = f(\varphi(F,K))$  and $\Lambda'(X+_{f}W) = G+_{f^{\ast}}W$, where $f^{\ast}(F) = f(Cl_{X}(\varphi_{x_{0}}(F))) = f(\varphi(F,x_{0}))$. So $\forall F \in Closed(G)$, $f^{\ast}(F) \subseteq f_{\Lambda}(F)$, which implies, by \textbf{Corollary \ref{continuousidentity}}, that the identity map $id: G+_{f^{\ast}}W \rightarrow G+_{f_{\Lambda}}W$ is continuous and then a homeomorphism, since both spaces are compact and Hausdorff. Then  $f^{\ast} = f_{\Lambda}$, which implies that  $\Lambda(X+_{f}W) =  \Lambda'(X+_{f}W)$. It is clear that $\Lambda$ and $\Lambda'$ are equal on the morphisms. Thus $\Lambda'|_{EMPers_{0}(\varphi)} = \Lambda$.
\end{proof}

Let $\psi_{x_{0}}: (X,\varepsilon_{\varphi}) \rightarrow(G,\varepsilon_{G})$ be a coarse inverse of $\varphi_{x_{0}}$.

\begin{teo}\label{main}Let $\Pi: EMPers_{0}(G) \rightarrow EMPers_{0}(\varphi)$ be the functor given by the action $\varphi$ and $\Pi': MPers(\varepsilon_{G}) \rightarrow MPers(\varepsilon_{\varphi})$ be the functor defined by the pullback of $\psi_{x_{0}}$. Then $\Pi'|_{EMPers_{0}(G)} = \Pi$.
\end{teo}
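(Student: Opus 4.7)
The plan is to reduce the identity $\Pi'|_{EMPers_0(G)} = \Pi$ to its already-proved $\Lambda$-analog (Proposition \ref{differentperspectivities}) by exploiting the fact that both functor pairs $(\Pi,\Lambda)$ and $(\Pi',\Lambda')$ are mutually inverse isomorphisms of categories. Fix $G+_f W \in EMPers_0(G)$ and set $X+_{f_\Pi} W := \Pi(G+_f W)$ and $X+_{f^*} W := \Pi'(G+_f W)$; the goal is to show that these two compactifications of $X$ coincide. A preliminary observation, needed so that $\Lambda'$ can be applied to $\Pi(G+_f W)$, is that by \textbf{Proposition \ref{coarsegroup}} the space $X+_{f_\Pi} W$---which a priori lies only in $EMPers_0(\varphi)$---is automatically in $MPers(\varepsilon_\varphi)$.

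Now apply \textbf{Proposition \ref{differentperspectivities}} to $X+_{f_\Pi} W \in EMPers_0(\varphi)$ to obtain
\[
\Lambda'(X+_{f_\Pi} W) \;=\; \Lambda(X+_{f_\Pi} W) \;=\; \Lambda \circ \Pi(G+_f W) \;=\; G+_f W.
\]
On the other hand, \textbf{Proposition \ref{milnorsvarccoarse}} guarantees that $\varphi_{x_0}$ and $\psi_{x_0}$ are mutually coarse inverse, so part 2 of \textbf{Proposition \ref{Teorema1}} (applicable because $G$ is countable and $X$ has a countable basis, which is crucial since $\psi_{x_0}$ need not be continuous) yields that $\Pi'$ and $\Lambda'$ are mutually inverse isomorphisms between $MPers(\varepsilon_G)$ and $MPers(\varepsilon_\varphi)$. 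In particular $\Lambda'(X+_{f^*} W) = \Lambda' \circ \Pi'(G+_f W) = G+_f W$. Since $\Lambda'$ is an isomorphism of categories, and hence bijective on objects, we conclude that $X+_{f_\Pi} W = X+_{f^*} W$, settling the equality of the two functors on objects.

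For a morphism $id+\phi: G+_{f_1} W_1 \to G+_{f_2} W_2$ in $EMPers_0(G)$, both $\Pi(id+\phi)$ and $\Pi'(id+\phi)$ are, by the very definitions of the two functors, the map $id_X + \phi$ between the corresponding compactifications of $X$, whose sources and targets coincide by the object part above; hence the two functors agree on morphisms as well. The main obstacle of the whole argument is the bridging required to feed $\Pi(G+_f W)$ into $\Lambda'$: without some identification of group-theoretic and coarse-geometric perspectivity, one cannot even mix the two frameworks to perform this formal chase. Once \textbf{Proposition \ref{coarsegroup}} supplies that bridge, the remainder reduces to invoking the categorical invertibility statements already at our disposal.
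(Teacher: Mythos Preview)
Your proof is correct and takes essentially the same approach as the paper's: both arguments deduce $\Pi'|_{EMPers_0(G)} = \Pi$ from Proposition~\ref{differentperspectivities} together with the fact that the pairs $(\Pi,\Lambda)$ and $(\Pi',\Lambda')$ are strict inverses, with your version simply making explicit the bridge via Proposition~\ref{coarsegroup} and the bijectivity-on-objects step that the paper compresses into a single sentence.
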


\begin{proof}It is immediate from the fact that the pairs $\Pi,\Lambda$ and $\Pi', \Lambda'$ are inverses (the compositions are exactly the identity functors and not just naturally homeomorphic to the identity functors).
\end{proof}

Since the construction of the functors given by coarse equivalences agree with Guilbault and Moran's construction (Corollary 7.3 of \cite{GM}) for $\mathcal{Z}$-structures and the functors given by actions agree with the functors given by coarse equivalences for equivariant compactifications, we have that compositions of functors given by actions agree with Guilbault and Moran's construction for $E\mathcal{Z}$-structures. Precisely we have:

\begin{cor}\label{ezcorrespondence}Let $G$ be a group and $\varphi_{i}: G \curvearrowright X_{i}$ properly discontinuous and cocompact actions, $i = \{1,2\}$, where $X_{1}$ and $X_{2}$ are Hausdorff locally compact metric ANR spaces. If $\Pi_{i}: EMPers_{0}(G) \rightarrow EMPers(\varphi_{i})$ and $\Lambda_{i}: EMPers_{0}(\varphi_{i}) \rightarrow EMPers_{0}(G)$ are the  functors given by the action $\varphi_{i}$, then $\Pi_{2} \circ \Lambda_{1}$ and $\Pi_{1}\circ \Lambda_{2}$ preserve $E\mathcal{Z}$-structures. \eod
\end{cor}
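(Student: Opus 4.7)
The plan is to reduce Corollary \ref{ezcorrespondence} to Corollary 7.3 of \cite{GM} (Guilbault--Moran's boundary swapping theorem for $\mathcal{Z}$-structures) by identifying the composition $\Pi_{2} \circ \Lambda_{1}$, on equivariant inputs, with a single coarse-geometric pullback functor associated to a coarse equivalence $X_{1} \to X_{2}$.

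First I would fix basepoints $x_{0}^{(i)} \in X_{i}$ and apply \textbf{Proposition \ref{milnorsvarccoarse}} to obtain coarse equivalences $\varphi_{x_{0}^{(i)}}: (G,\varepsilon_{G}) \to (X_{i},\varepsilon_{\varphi_{i}})$ with coarse quasi-inverses $\psi_{x_{0}^{(i)}}$. Composing these gives a coarse equivalence $\pi := \varphi_{x_{0}^{(2)}} \circ \psi_{x_{0}^{(1)}}: X_{1} \to X_{2}$. By \textbf{Theorem \ref{main}}, the group-theoretic functor $\Lambda_{1}$ agrees on $EMPers_{0}(\varphi_{1})$ with the coarse-geometric pullback functor $\Lambda_{1}'$ induced by $\varphi_{x_{0}^{(1)}}$, and $\Pi_{2}$ agrees on $EMPers_{0}(G)$ with the coarse-geometric pullback functor $\Pi_{2}'$ induced by $\psi_{x_{0}^{(2)}}$. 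Then I would use \textbf{Proposition \ref{Teorema1}}, which states that these are genuine isomorphisms of categories (not merely equivalences), together with \textbf{Proposition \ref{closemapsequalpullbacks}} to absorb the composition, to identify $\Pi_{2}' \circ \Lambda_{1}'$ with the pullback functor associated directly to $\pi$.

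Next I would invoke the Remark following \textbf{Proposition \ref{Teorema1}}, which records that on ANR metric spaces the coarse-geometric pullback construction coincides with the one of Guilbault--Moran and that their Corollary 7.3 asserts precisely that this pullback sends controlled $\mathcal{Z}$-compactifications (i.e.\ the perspective ones with respect to the bounded coarse structure, in the language of this paper) to controlled $\mathcal{Z}$-compactifications. Equivariance and the continuous extension of the action are preserved by construction, since $\Pi_{i}$ and $\Lambda_{i}$ are defined on equivariant subcategories; hence the nullity condition of an $E\mathcal{Z}$-structure (which is subsumed by group theoretic perspectivity via \textbf{Proposition \ref{coarsegroup}}) survives the transfer. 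This yields that $\Pi_{2} \circ \Lambda_{1}$ preserves $E\mathcal{Z}$-structures, and the argument for $\Pi_{1} \circ \Lambda_{2}$ is symmetric.

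The only nontrivial step is the identification in paragraph two: the composition of two pullback functors is not, a priori, literally the pullback along the composed map (the two candidate admissible maps on closed sets differ by a closure operation, cf.\ \textbf{Proposition \ref{compositionpullback}}). I expect this to be the main obstacle, and I would handle it by appealing to \textbf{Proposition \ref{Teorema1}} giving equality of categories together with \textbf{Proposition \ref{eightlemma}}, which shows that the two pullback conventions differ only by continuous identity maps between compact Hausdorff spaces — hence by homeomorphisms. Once this identification is in place, everything else is formal or imported directly from \cite{GM}.
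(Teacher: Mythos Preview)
Your approach is essentially the same as the paper's. The paper states the corollary with no formal proof (just the end-of-proof symbol), and the justification is the preceding sentence: the action functors agree with the coarse-equivalence functors (Theorem \ref{main}), the latter agree with Guilbault--Moran's construction (the Remark after Proposition \ref{Teorema1}), and Guilbault--Moran's result (Corollary 7.3 of \cite{GM}) does the work on $\mathcal{Z}$-structures. You have simply unpacked this outline and, commendably, flagged and resolved the one nontrivial point the paper leaves implicit --- that the composite $\Pi_{2}'\circ\Lambda_{1}'$ really is the pullback along the composed coarse equivalence --- using Propositions \ref{compositionpullback} and \ref{eightlemma} together with compact Hausdorffness.
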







\section{Limits}

\begin{prop}Let $(X,\varepsilon)$ be a locally compact paracompact Hausdorff
space with a proper coarsely connected coarse structure. Then $Pers(\varepsilon)$ is closed under small limits.
\end{prop}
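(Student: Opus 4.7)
The plan is to construct the limit of any small diagram $F : I \to Pers(\varepsilon)$ as the standard limit in $Comp(X)$ (the closure of the diagonal image of $X$ inside the topological limit of the $F(i)$), and then check that the resulting compactification inherits perspectivity from the $F(i)$ coordinate by coordinate. Write $F(i) = X +_{f_{i}} W_{i}$ and $F(\phi) = id + \varpi_{\phi}$ for $\phi : i \to j$ in $I$. Assuming $I$ non-empty (the empty case gives the terminal one-point compactification of $X$, which is perspective because every $e \in \varepsilon$ is proper by \textbf{Proposition \ref{propercontrolled}}), I would form the topological limit
\[
L \ =\ \bigl\{(z_{i})_{i} \in \textstyle\prod_{i} F(i)\ :\ \varpi_{\phi}(z_{i}) = z_{j} \text{ for every } \phi : i \to j\bigr\},
\]
which is compact Hausdorff as a closed subspace of a compact Hausdorff product, and set $Z = Cl_{L}(\Delta X)$, where $\Delta : X \to L$ is the diagonal $x \mapsto (x,\ldots,x,\ldots)$, well-defined because every $\varpi_{\phi}$ restricts to the identity on $X$.

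The first step is to check that $Z$ is a compactification of $X$ and a limit of $F$ in $Comp(X)$. The map $\Delta$ is a homeomorphism onto its image, inverted by any projection $p_{i_{0}} : Z \to F(i_{0})$, and $\Delta X$ is dense in $Z$ by construction. The only delicate topological point is the openness of $\Delta X$ in $Z$: given $x \in X$, by local compactness choose an open $U \subseteq X$ with $x \in U$ and $\overline{U}^{X}$ compact, fix any index $i_{0}$, and let $V = p_{i_{0}}^{-1}(U) \subseteq Z$. For any $z = (z_{i}) \in V$ a net $x^{\alpha} \in X$ with $\Delta x^{\alpha} \to z$ satisfies $x^{\alpha} \to z_{i_{0}} \in U$ in $F(i_{0})$, so eventually $x^{\alpha} \in \overline{U}^{X}$; this compact set is closed in each $F(j)$, and the inclusion $X \hookrightarrow F(j)$ is continuous, so Hausdorff uniqueness in $F(j)$ forces $z_{j} = z_{i_{0}}$ for every $j$. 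Hence $V = \Delta U$, proving openness. The universal property is routine: any compatible cone $q_{i} : W \to F(i)$ in $Comp(X)$ induces a continuous map $W \to \prod F(i)$ which lands in $L$ by compatibility and in $Z$ by continuity (its restriction to the dense $X$ lands in $\Delta X \subseteq Z$), and is unique since the projections separate points of $Z$.

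The key step is perspectivity of $Z = X +_{f} Y$. Fix $e \in \varepsilon$; it is proper by \textbf{Proposition \ref{propercontrolled}}, so it remains to verify the net condition. The same argument as for openness yields the \emph{boundary dichotomy}: every $z \in Y = Z \setminus \Delta X$ has $z_{i} \in W_{i}$ for all $i$, because a single coordinate $z_{i_{0}} \in X$ would, by the same net computation, force $z_{j} = z_{i_{0}}$ for every $j$ and hence $z \in \Delta X$. Now take a net $(x_{\alpha}, y_{\alpha}) \in e$ with $x_{\alpha} \to z$ in $Z$ for some $z \in Y$. Componentwise, $x_{\alpha} \to z_{i} \in W_{i}$ in each $F(i)$, and since $F(i) \in Pers(\varepsilon)$ means $e \in \varepsilon_{f_{i}}$, perspectivity of $F(i)$ gives $y_{\alpha} \to z_{i}$ in each $F(i)$, and hence $y_{\alpha} \to z$ in $Z \subseteq \prod F(i)$. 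Therefore $e \in \varepsilon_{f}$, so $\varepsilon \subseteq \varepsilon_{f}$ and $Z \in Pers(\varepsilon)$.

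The main obstacle is the boundary dichotomy and the associated openness of $\Delta X$ in $Z$: these are what allow both the construction of a genuine compactification and the coordinate-by-coordinate transfer of perspectivity. Both hinge on local compactness of $X$ together with Hausdorffness of each $F(i)$; the perspectivity argument itself is then immediate from the perspectivity of the individual $F(i)$.
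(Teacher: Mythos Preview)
Your proof is correct, but it takes a genuinely different route from the paper's argument.

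The paper does not construct the limit explicitly. It simply observes that $Comp(X)$ is complete, so the limit $\lim\tilde{F}$ exists there, and then verifies perspectivity indirectly: by Roe's result (Proposition 2.39 of \cite{Ro}) there is a \emph{maximal} perspective compactification $\tilde{X}$ of $(X,\varepsilon)$; since $\tilde{X}$ maps to every $F(c)$, it maps to $\lim\tilde{F}$, and then \textbf{Proposition \ref{quotientpersp}} (any quotient of a perspective compactification is perspective) finishes the argument in one line. In other words, the paper exploits the existence of a terminal object in $Pers(\varepsilon)$ together with a stability-under-quotients lemma, and never touches the net criterion or the coordinatewise structure of the limit.

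Your approach, by contrast, builds the limit concretely as the closure of the diagonal in the topological inverse limit, verifies by hand that it lies in $Comp(X)$ (the openness/boundary-dichotomy step), and then checks $\varepsilon \subseteq \varepsilon_{f}$ directly by pushing the net condition through each coordinate. This is longer but more self-contained: you avoid invoking the existence of the maximal perspective compactification (a nontrivial input) and you do not need \textbf{Proposition \ref{quotientpersp}}. The paper's proof is cleaner if one is willing to quote those two facts; yours is the argument one would give if those tools were not already on the table, and it has the side benefit of making the limit object explicit.
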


\begin{proof}Let's consider $\mathcal{C}$ a small category, $F: \mathcal{C} \rightarrow Pers(\varepsilon)$ a functor and $\tilde{F}: \mathcal{C} \rightarrow Comp(X)$ the functor that does the same as $F$. We have that $\lim\limits_{\longleftarrow} \tilde{F}$ exists. It is easy to see that if $\lim\limits_{\longleftarrow} \tilde{F}$ has the perspectivity property, then $\lim\limits_{\longleftarrow} \tilde{F} = \lim\limits_{\longleftarrow} F$.

Let $\tilde{X}$ be the maximal perspective compactification of $X$ (its existence is given by Proposition 2.39 of \cite{Ro}). Then, for every object $c \in \mathcal{C}$, there is a unique morphism $\psi_{c}: \tilde{X} \rightarrow F(c)$. The family $\{\psi_{c}\}_{c \in \mathcal{C}}$ induces a morphism in $\mathcal{C}$ $\psi: \tilde{X} \rightarrow \lim\limits_{\longleftarrow} \tilde{F}$. By \textbf{Proposition \ref{quotientpersp}}, it follows that $\lim\limits_{\longleftarrow} \tilde{F}$ has the perspectivity property.
\end{proof}

\section{$\delta$-Hyperbolic spaces}

\subsection{Boundaries that are accessible by coarse arcs}

\begin{defi}Let $(X,\varepsilon)$ be a locally compact paracompact Hausdorff
space with a proper coarsely connected coarse structure and $\Psi$ a set of proper injective maps of the form $\gamma: A_{\gamma} \rightarrow X$, where $A_{\gamma}$ is a closed unbounded subset of $[0,\infty)$ that contains $0$ and there exists $p \in X$ such that $\forall \gamma \in \Psi$, $\gamma(0) = p$. A perspective compactification $X+_{f}W$ is $\Psi$-accessible if the following conditions are satisfied:

\begin{enumerate}
    \item For every $w \in W$, there exists $\gamma \in \Psi$ such that $f(Cl_{X}(Im \ \gamma)) = \{w\}$. 
    \item If $\gamma \in \Psi$, then $\# f(Cl_{X}(Im \ \gamma)) = 1$.
\end{enumerate}
\end{defi}

\begin{obs}Let $A$ be a closed unbounded subset of $[0,\infty)$, $X$ a Hausdorff locally compact space and $\gamma: A \rightarrow X$ a continuous proper map. Then $\# f(Im \ \gamma) = 1$ if and only if the map  $\gamma$ has a continuous extension to a map $\bar{\gamma}: A \cup\{\infty\} \rightarrow X+_{f}W$, where  $A \cup\{\infty\}$ has the subspace topology given by $[0,\infty]$. In this case $\bar{\gamma}(\infty)$ is the only point in $f(Im \ \gamma)$. 
\end{obs}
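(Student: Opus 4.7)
The plan is to reduce both implications to one elementary fact about the Artin-Wraith topology: for a closed $C \subseteq X$, the closure $Cl_{X+_{f}W}(C) = C \cup f(C)$, so $Cl_{X+_{f}W}(C)\cap W = f(C)$. A preliminary step is to check that $Im \ \gamma$ is already closed in $X$ so that $f(Im \ \gamma)$ is well-defined: a continuous proper map from a closed subset of $[0,\infty)$ into a locally compact Hausdorff space is closed, because any $y \in Cl_{X}(Im \ \gamma)$ has a compact neighborhood $K$ and then $\gamma^{-1}(K)$ is compact in $A$, forcing $y \in \gamma(\gamma^{-1}(K)) \subseteq Im \ \gamma$. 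I would also note the monotonicity of $f$ (formal from $f(A\cup B) = f(A)\cup f(B)$): $A \subseteq B$ implies $f(A) \subseteq f(B)$.

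For the $(\Leftarrow)$ direction, assume the extension $\bar{\gamma}$ exists and let $w := \bar{\gamma}(\infty)$. First $w$ must lie in $W$: were $w \in X$, it would admit a compact neighborhood $K \subseteq X$, and by continuity $\bar{\gamma}^{-1}(K)$ would contain some tail $(A\cap[N,\infty))\cup\{\infty\}$, contradicting unboundedness of $A$ together with compactness of $\gamma^{-1}(K)$. Next, $A\cup\{\infty\}$ is compact as a closed subset of $[0,\infty]$, so $\bar{\gamma}(A\cup\{\infty\}) = Im \ \gamma \cup\{w\}$ is compact and therefore closed in the Hausdorff space $X+_{f}W$, yielding $Cl_{X+_{f}W}(Im \ \gamma) \subseteq Im \ \gamma \cup\{w\}$ and hence $f(Im \ \gamma) \subseteq \{w\}$. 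On the other hand, $\infty$ is a limit point of $A$ inside $A\cup\{\infty\}$, so $w$ is a limit of $\bar{\gamma}(A) = Im \ \gamma$, giving $w \in Cl_{X+_{f}W}(Im \ \gamma)\cap W = f(Im \ \gamma)$. Hence $f(Im \ \gamma) = \{w\}$.

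For the $(\Rightarrow)$ direction, let $w$ be the unique point of $f(Im \ \gamma)$ and put $\bar{\gamma}(\infty) := w$. Continuity of $\bar{\gamma}$ at points of $A$ is automatic because $X$ is open in $X+_{f}W$, so I only need continuity at $\infty$. Suppose for contradiction that some open neighborhood $U \ni w$ makes $S := \{t \in A : \gamma(t) \notin U\}$ unbounded. Then $S$ is closed in $[0,\infty)$ and $\gamma|_{S}$ is proper, so $\gamma(S)$ is closed in $X$ by the same argument as in the first paragraph; it is also non-compact, for otherwise $\gamma^{-1}(\gamma(S)) \supseteq S$ would be compact in $A$, forcing $S$ to be bounded. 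By \textbf{Proposition \ref{compactglue}} applied to the compact space $X+_{f}W$, $f(\gamma(S)) \neq \emptyset$, and monotonicity together with $f(Im \ \gamma) = \{w\}$ force $f(\gamma(S)) = \{w\}$. But then $w \in Cl_{X+_{f}W}(\gamma(S)) \subseteq X+_{f}W \setminus U$, contradicting $w \in U$. The main obstacle is purely bookkeeping with the Artin-Wraith topology; once the closure-plus-compactness formula is in place, the rest is a routine properness argument.
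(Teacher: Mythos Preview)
Your argument is correct. The paper states this observation without proof, so there is nothing to compare against; your write-up supplies exactly the kind of routine verification the author is leaving to the reader, and it uses only the ingredients already in the paper (the description of closed sets in $X+_{f}W$, monotonicity of $f$, and \textbf{Proposition \ref{compactglue}}).

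Two very small points of polish. In the first paragraph, when you conclude $y \in \gamma(\gamma^{-1}(K))$, you are implicitly using that $y$ lies in the closure of $Im\,\gamma \cap K$, not just of $Im\,\gamma$; this follows because $K$ is a \emph{neighborhood} of $y$, but it is worth one extra clause. In the $(\Rightarrow)$ direction, to see that $S$ is closed in $[0,\infty)$ you should say explicitly that $S = \gamma^{-1}(X\setminus(U\cap X))$, which is closed in $A$ (and $A$ is closed in $[0,\infty)$); your properness claim for $\gamma|_{S}$ then follows because $S$ is closed in $A$. Neither of these is a gap---the proof stands as written.
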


\begin{prop}\label{quotientaccess}Let $(X,\varepsilon)$ be a locally compact paracompact Hausdorff space with a proper coarsely connected coarse structure, let $X+_{f}W$ and $X+_{g}Z$ two perspective compactifications of $X$ and a continuous map $id+\phi: X+_{f}W \rightarrow X+_{g}Z$. If $X+_{f}W$ is $\Psi$-accessible, and the maps in $\Psi$ are continuous, then $X+_{g}Z$ is $\Psi$-accessible.
\end{prop}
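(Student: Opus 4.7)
The plan is to lean on the Remark just before the proposition, which identifies the condition $\#f(\mathrm{Im}\,\gamma)=1$ with the existence of a continuous extension $\bar{\gamma}\colon A_{\gamma}\cup\{\infty\}\to X+_{f}W$ of $\gamma$. Continuous extensions compose with continuous maps, so the morphism $id+\phi\colon X+_{f}W\to X+_{g}Z$ should automatically transport the $\Psi$-accessibility from $W$ to $Z$.

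First I would observe that $\phi\colon W\to Z$ is surjective: since $id+\phi$ is a continuous map from the compact space $X+_{f}W$ to the Hausdorff space $X+_{g}Z$, its image is closed and contains the dense subset $X$, hence equals all of $X+_{g}Z$; because $id+\phi$ fixes $X$ pointwise, every point of $Z$ has a preimage in $W$.

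Next, for any $\gamma\in\Psi$, condition (2) of $\Psi$-accessibility of $X+_{f}W$ gives $f(Cl_{X}(\mathrm{Im}\,\gamma))=\{w\}$ for some $w\in W$. Since $\gamma$ is continuous and proper into a locally compact Hausdorff space, $\mathrm{Im}\,\gamma$ is already closed in $X$, so the Remark applies and yields a continuous extension $\bar{\gamma}\colon A_{\gamma}\cup\{\infty\}\to X+_{f}W$ with $\bar{\gamma}(\infty)=w$. Composing with $id+\phi$ produces a continuous map $A_{\gamma}\cup\{\infty\}\to X+_{g}Z$ which agrees with $\gamma$ on $A_{\gamma}$ and sends $\infty$ to $\phi(w)$; applying the Remark in the reverse direction gives $g(Cl_{X}(\mathrm{Im}\,\gamma))=\{\phi(w)\}$. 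This establishes condition (2) of the definition for $X+_{g}Z$.

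Condition (1) then follows immediately: given $z\in Z$, use surjectivity of $\phi$ to pick $w\in W$ with $\phi(w)=z$, use condition (1) for $X+_{f}W$ to pick $\gamma\in\Psi$ with $f(Cl_{X}(\mathrm{Im}\,\gamma))=\{w\}$, and invoke the paragraph above to conclude $g(Cl_{X}(\mathrm{Im}\,\gamma))=\{\phi(w)\}=\{z\}$. I do not foresee any real obstacle; the only minor technical point is verifying $\mathrm{Im}\,\gamma$ is closed, which is a standard consequence of properness and local compactness, and the real content of the argument is packaged into the Remark.
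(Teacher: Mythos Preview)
Your proof is correct and follows essentially the same approach as the paper's own proof: both translate $\Psi$-accessibility into the language of continuous extensions via the Remark and then push these extensions forward along the continuous map $id+\phi$. You are actually more careful than the paper in explicitly justifying the surjectivity of $\phi$ and the closedness of $\mathrm{Im}\,\gamma$, both of which the paper's proof uses tacitly.
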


\begin{proof}Let $z \in Z$ and $w \in W$ such that $\phi(w) = z$. Since the space $X+_{f}W$ is $\Psi$-accessible, then there exists a map $\gamma \in \Psi$ such that its extension $\bar{\gamma}: Dom(\gamma) \cup\{\infty\} \rightarrow X+_{f}W$ such that $\bar{\gamma}(\infty) = w$ is continuous at $w$. Since $id+\phi$ is continuous, then the extension $\bar{\gamma}': Dom(\gamma) \cup\{\infty\} \rightarrow X+_{g}Z$ given by $\bar{\gamma}'(\infty) = \phi(w) = z$ is continuous at $\infty$.

Let $\gamma \in \Psi$. Then there exists a point $w \in W$ such that the extension $\bar{\gamma}: Dom(\gamma) \cup\{\infty\} \rightarrow X+_{f}W$ given by $\bar{\gamma}(\infty) = w$ is continuous at $w$. Then the extension $\bar{\gamma}': Dom(\gamma) \cup\{\infty\} \rightarrow X+_{g}Z$ given by $\bar{\gamma}'(\infty) = \phi(w)$ is continuous at $\infty$. Thus $X+_{g}Z$ is $\Psi$-accessible.
\end{proof}

\begin{defi}Let $(X,\varepsilon)$ be a locally compact paracompact Hausdorff
space with a proper coarsely connected coarse structure and $\Psi$ a set of proper continuous injective maps. We say that $\Psi$ is closed for limits if all maps in $\Psi$ have the same domain and if a net of elements in $\Psi$ converges uniformly to a map, then this map is in $\Psi$.  
\end{defi}

\begin{ex}If $X$ is a proper geodesic metric space, then the set of geodesic rays starting at a fixed point $x \in X$ is closed for limits (consequence of Proposition 2.3.1 of \cite{Pa}).
\end{ex}





\begin{prop}\label{pushforwardaccess}Let $(X,\varepsilon)$ and $(Y,\eta)$ be  locally compact paracompact Hausdorff spaces with proper coarsely connected coarse structures and $\pi: Y \rightarrow X$ be a coarse equivalence with coarse inverse $\varpi$. If $X+_{f}W$ is a perspective compactification that is $\Psi$-accessible, then $\Pi(X+_{f}W)$ is $\varpi(\Psi)$-accessible, where $\varpi(\Psi) = \{\varpi \circ \gamma: \gamma \in \Psi\}$ and $\Pi$ is the functor induced by $\pi$.
\end{prop}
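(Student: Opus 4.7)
The plan is to reduce both conditions of $\varpi(\Psi)$-accessibility for $Y+_{f^{\ast}}W$ to the single identity
\[ f^{\ast}(Cl_{Y}(\mathrm{Im}(\varpi\circ\gamma))) = f(Cl_{X}(\mathrm{Im}\,\gamma)) \]
for every $\gamma \in \Psi$. Once this is established, condition (2) of accessibility is automatic because the right-hand side is a singleton by $\Psi$-accessibility of $X+_{f}W$; for condition (1), given $w \in W$ I use the $\Psi$-accessibility of $X+_{f}W$ to pick $\gamma \in \Psi$ with $f(Cl_{X}(\mathrm{Im}\,\gamma)) = \{w\}$ and take $\gamma' = \varpi\circ\gamma \in \varpi(\Psi)$ as the required witness. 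A short preliminary check is also needed: $\varpi(\Psi)$ must actually qualify as an indexing family, so one verifies that all maps share the common basepoint $\varpi(p)$, and that $\varpi\circ\gamma$ inherits topological properness and injectivity from $\gamma$ using that $\varpi$ is a coarse equivalence (with the implicit continuity assumption already used for the pullback functor of \textbf{Proposition \ref{Teorema1}} to land in $Pers(\eta)$).

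To prove the identity I unfold the pullback as $f^{\ast}(Cl_{Y}(\mathrm{Im}(\varpi\circ\gamma))) = f(Cl_{X}(\pi(Cl_{Y}(\varpi(\mathrm{Im}\,\gamma)))))$ and split the argument in two. The first step replaces $\pi(Cl_{Y}(\varpi(\mathrm{Im}\,\gamma)))$ with $\pi\varpi(\mathrm{Im}\,\gamma)$ up to closure in $X$: the inclusion $\pi\varpi(\mathrm{Im}\,\gamma) \subseteq \pi(Cl_{Y}(\varpi(\mathrm{Im}\,\gamma)))$ is immediate, and continuity of $\pi$ gives the reverse $\pi(Cl_{Y}(\varpi(\mathrm{Im}\,\gamma))) \subseteq Cl_{X}(\pi\varpi(\mathrm{Im}\,\gamma))$. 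Taking $Cl_{X}$ and applying $f$ collapses both sides to $f(Cl_{X}(\pi\varpi(\mathrm{Im}\,\gamma)))$, so the identity reduces to showing $f(Cl_{X}(\pi\varpi(\mathrm{Im}\,\gamma))) = f(Cl_{X}(\mathrm{Im}\,\gamma))$.

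The second step uses that $\pi\circ\varpi$ is close to $id_{X}$: there is $e \in \varepsilon$ with $\{(\pi\varpi(x),x) : x \in X\} \subseteq e$, and this gives the symmetric inclusions $\pi\varpi(\mathrm{Im}\,\gamma) \subseteq \mathfrak{B}(\mathrm{Im}\,\gamma,e^{-1})$ and $\mathrm{Im}\,\gamma \subseteq \mathfrak{B}(\pi\varpi(\mathrm{Im}\,\gamma),e)$. Applying \textbf{Proposition \ref{closesameboundary}} in both directions yields $Cl_{X+_{f}W}(\pi\varpi(\mathrm{Im}\,\gamma)) - X = Cl_{X+_{f}W}(\mathrm{Im}\,\gamma) - X$, and since these $W$-parts are respectively $f(Cl_{X}(\pi\varpi(\mathrm{Im}\,\gamma)))$ and $f(Cl_{X}(\mathrm{Im}\,\gamma)) = \{w\}$, the identity follows. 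The whole argument is essentially this symmetric application of \textbf{Proposition \ref{closesameboundary}} wrapped around the closeness of $\pi\varpi$ to $id_{X}$; the only genuine obstacle is being careful that the closure $Cl_Y$ inside the pullback is absorbed by $Cl_X$ after applying $\pi$, which is exactly where continuity of $\pi$ enters.
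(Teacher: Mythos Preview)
Your proposal is correct and follows essentially the same route as the paper: both arguments reduce to the identity $f^{\ast}(Cl_{Y}(\mathrm{Im}(\varpi\circ\gamma))) = f(Cl_{X}(\mathrm{Im}\,\gamma))$, established by unfolding the pullback and then invoking \textbf{Proposition \ref{closesameboundary}} via the closeness of $\pi\varpi$ to $id_{X}$. You are simply more explicit than the paper about the role of continuity of $\pi$ in absorbing the inner $Cl_{Y}$, and about the preliminary check that $\varpi(\Psi)$ is an admissible family (a point the paper leaves implicit; note in particular that injectivity of $\varpi\circ\gamma$ need not literally hold but is not used in either argument).
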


\begin{proof}We have that $\Pi(X+_{f}W) = Y+_{f^{\ast}}W$, where $f^{\ast}$ is the pullback of $f$ with respect to $\pi$ and $id_{W}$. 

Let $w \in W$. There exists $\gamma \in \Psi$ such that $f(Cl_{X}(Im \ \gamma)) = \{w\}$.  Then $\varpi \circ \gamma \in \varpi(\Psi)$ and $f^{\ast}(Cl_{Y}(Im \ \varpi \circ \gamma))  = f(Cl_{X}(\pi(Cl_{Y}(Im \ \varpi \circ \gamma)))) = f(Cl_{X}(Im \ \pi \circ \varpi \circ \gamma)) = f(Cl_{X}(Im \ \gamma)) = \{w\}$, by \textbf{Proposition \ref{closesameboundary}}.

Let  $\varpi \circ \gamma \in \varpi(\Psi)$. Then $\gamma \in \Psi$, which implies that $\# f(Cl_{X}(Im \ \gamma)) = 1$. But we have that $f(Cl_{X}(Im \ \gamma)) = f^{\ast}(Cl_{Y}(Im \ \varpi \circ \gamma))$, which implies that $\# f^{\ast}(Cl_{Y}(Im \ \varpi \circ \gamma)) = 1$.

Thus $\Pi(X+_{f}W)$ is $\varpi(\Psi)$-accessible.
\end{proof}

\subsection{$\delta$-Hyperbolic spaces}

\begin{defi}Let $(X,d)$ be a proper metric space and $p \in X$. Let $\Phi_{p}$ be the set of all geodesic rays starting at $p$.
\end{defi}

\begin{ex}Let $(X,d)$ be a proper $\delta$-hyperbolic geodesic space (see \cite{BH} for the definition)  and $p \in X$. Then the hyperbolic compactification $X+_{f_{\infty}}\partial_{\infty}(X)$ is a perspective compactification of $(X, \varepsilon_{d})$ (it follows from Lemma 6.23 of \cite{Ro} and \textbf{Proposition \ref{quotientpersp}}) and $\Phi_{p}$-accessible (by the usual construction of the hyperbolic compactification).

\end{ex}

\begin{prop}Let $(X,d)$ be a proper $\delta$-hyperbolic geodesic space and $p\in X$. Then the hyperbolic compactification of $X$ is the limit of all $\Phi_{p}$-accessible compactifications of $(X,\varepsilon_{d})$. 
\end{prop}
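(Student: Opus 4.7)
The strategy is to identify the hyperbolic compactification $H := X+_{f_{\infty}}\partial_{\infty}(X)$ with an initial cone over the diagram $\mathcal{D}$ of all $\Phi_{p}$-accessible perspective compactifications of $(X,\varepsilon_{d})$. Since $H$ is itself $\Phi_{p}$-accessible (by the preceding example), it is one of the objects of $\mathcal{D}$, and since $Pers(\varepsilon_{d})$ is closed under small limits the object $L := \lim \mathcal{D}$ exists in $Pers(\varepsilon_{d})$. As $H \in \mathcal{D}$, the limit cone supplies a projection $L \to H$; conversely, once $H$ is shown to admit a (necessarily unique, by density of $X$) morphism to every object of $\mathcal{D}$, the universal property of $L$ supplies a morphism $H \to L$. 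Both composites are the identity on $X$, hence by density the identity everywhere, so $H \cong L$ in $Pers(\varepsilon_{d})$.

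The main construction is thus: for an arbitrary $X+_{g}Z \in \mathcal{D}$, build a continuous $id+\phi : H \to X+_{g}Z$. For $\xi \in \partial_{\infty}(X)$, pick any geodesic ray $\gamma \in \Phi_{p}$ from $p$ to $\xi$ and declare $\phi(\xi)$ to be the unique element of $g(Cl_X(Im\,\gamma))$, a singleton by condition (2) of $\Phi_p$-accessibility. Independence of the choice of $\gamma$ comes from the standard $\delta$-hyperbolic fact that any two geodesic rays from $p$ asymptotic to the same boundary point lie at bounded Hausdorff distance, together with \textbf{Proposition \ref{closesameboundary}}. To verify that $id+\phi$ is continuous I would translate the closed-set criterion for Artin--Wraith glueings: for a closed $C \subseteq X+_{g}Z$, its preimage is closed in $H$ provided (i) $\phi$ is continuous on $\partial_{\infty}(X)$ and (ii) the admissibility condition $\phi(f_{\infty}(A)) \subseteq g(A)$ holds for every closed $A \subseteq X$.

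For (ii), given $\xi \in f_{\infty}(A)$, extract $a_{n} \in A$ with $a_{n} \to \xi$ in $H$ and use properness plus Arzel\`a--Ascoli to pass to a subsequence whose geodesic segments $[p,a_{n}]$ converge uniformly on compacts to a ray $\gamma \in \Phi_{p}$ with $\gamma(\infty)=\xi$; the perspectivity of $e_{R} := \{(x,y): d(x,y)\leqslant R\} \in \varepsilon_{d}$ applied along the $\delta$-fellow-travelling between $a_{n}$ and $\gamma$ then gives $a_{n} \to \phi(\xi)$ in $X+_{g}Z$, placing $\phi(\xi) \in g(A)$. For (i), exploit that $\Phi_{p}$ with the compact-open topology is compact (Arzel\`a--Ascoli on a proper geodesic space), that the tautological map $\Phi_{p} \to \partial_{\infty}(X)$ is a continuous surjection onto a Hausdorff space and hence a quotient map, and reduce continuity of $\phi$ to continuity of $\Phi_{p} \to Z$, $\gamma \mapsto \lim_{t}\gamma(t)$. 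Given $\gamma_{n} \to \gamma$ uniformly on compacts, a diagonal time $T(n) \to \infty$ with $d(\gamma_{n}(T(n)),\gamma(T(n))) \leqslant 1$ places $\{\gamma_{n}(T(n))\}$ in an $e_{1}$-neighbourhood of $Im\,\gamma$, so by \textbf{Proposition \ref{closesameboundary}} its only $Z$-accumulation point is $\phi(\gamma)$, giving $\gamma_{n}(T(n)) \to \phi(\gamma)$ in $X+_{g}Z$. An iterated application of perspectivity for $e_{R}$ with $R \nearrow \infty$, matched to a nested basis of neighbourhoods of $\phi(\gamma)$, then propagates this convergence along the tail of each $\gamma_{n}$ to the endpoint $\phi(\gamma_{n})$, yielding $\phi(\gamma_{n}) \to \phi(\gamma)$ and continuity of $\phi$.

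The hardest step is precisely this last propagation: since the tails of $\gamma_{n}$ eventually exit any fixed bounded neighbourhood of $\gamma$, one cannot use a single element of $\varepsilon_{d}$ in \textbf{Proposition \ref{closesameboundary}}, and instead must coordinate growing radii $R_{n}$ against shrinking perspectivity neighbourhoods of $\phi(\gamma)$. Once the morphism $id+\phi$ is constructed for every target in $\mathcal{D}$, compatibility with morphisms inside $\mathcal{D}$ is automatic by density of $X$, and the identification $H \cong L$ follows from the universal property as described.
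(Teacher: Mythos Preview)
Your overall architecture agrees with the paper: both define the map $\partial_{\infty}(X)\to Z$ by sending the endpoint of a ray $\gamma\in\Phi_{p}$ to the unique point of $g(Im\,\gamma)$, check well-definedness via the bounded-Hausdorff-distance fact together with \textbf{Proposition \ref{closesameboundary}}, and conclude by the universal property once that map (extended by $id_{X}$) is known to be continuous.

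The divergence is entirely in how continuity of $id+\phi:H\to X+_{g}Z$ is obtained, and here your route is harder and, as written, has a gap. The paper never verifies continuity directly. It uses the limit $L=X+_{f}W$ that you already introduced but then set aside: the limit-cone projections $L\to H$ and $L\to X+_{g}Z$ are continuous by construction, and one checks that the triangle through $id+\phi$ commutes by evaluating on rays (each $w\in W$ equals $f(Im\,\gamma)$ for some $\gamma\in\Phi_{p}$, and the two projections send it to the unique boundary point of that ray in $H$, respectively in $X+_{g}Z$). Since $L$ and $H$ are compact Hausdorff and the projection $L\to H$ is a continuous surjection, it is a quotient map; continuity of $id+\phi$ follows at once from continuity of $L\to X+_{g}Z$. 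This one-line quotient argument replaces all of your steps (i) and (ii).

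By contrast, your step (i)---the ``propagation''---does not work as sketched. Perspectivity of $e_{R}$ says only that if $x$ lies in a sufficiently small neighbourhood $U$ of a boundary point and $d(x,y)\leqslant R$, then $y$ lies in a prescribed larger neighbourhood $V$. Iterating this along the tail $\gamma_{n}|_{[T(n),\infty)}$ gives no control on $\phi(\gamma_{n})=\lim_{t\to\infty}\gamma_{n}(t)$: each step moves the nested neighbourhood outward, not inward, and the tail of $\gamma_{n}$ eventually leaves every $\B(Im\,\gamma,e_{R})$, so no fixed $R$ suffices, while letting $R\nearrow\infty$ forces the target neighbourhoods to grow rather than shrink. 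Nothing in $\Phi_{p}$-accessibility links the finite-time behaviour $\gamma_{n}(T(n))\to\phi(\gamma)$ to the endpoint $\phi(\gamma_{n})$. The paper's trick is precisely to avoid this by borrowing continuity from $L$ instead of proving it from scratch; since you already have $L$ in hand, you should use it.
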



\begin{proof}Let $X+_{f}W$ be the limit of all $\Phi_{p}$-accessible compactifications of $X$. So there is a continuous surjective map $id+\phi: X+_{f}W \rightarrow X+_{f_{\infty}}\partial_{\infty}(X)$. Let $X+_{g}Z$ be a compactification of $X$ such that it is $\Phi_{p}$-accessible. There is also a continuous surjective map $id+\phi': X+_{f}W \rightarrow X+_{g}Z$.

Let $x \in \partial_{\infty}(X)$. For a geodesic ray $\gamma$ that starts at $p$ such that $f_{\infty}(Im \ \gamma) = \{x\}$, we define $\psi(x) = g(Im \ \gamma)$, which is a single point since $X+_{g}Z$ is $\Phi_{p}$-accessible. This defines a map $\psi:  \partial_{\infty}(X) \rightarrow Z$. It is well defined, since for any other geodesic ray $\gamma'$ that starts at $p$ and such that $f_{\infty}(Im \ \gamma') = \{x\}$, $Im \ \gamma$ and $Im \ \gamma'$ have finite Hausdorff distance (by the definition of the hyperbolic boundary), which implies that $g(Im \ \gamma) = g(Im \ \gamma')$ (\textbf{Proposition \ref{closesameboundary}}). 

Let $w \in W$. There exists a geodesic ray $\gamma$ that starts at $p$ and such that $f(Im \ \gamma) = \{w\}$. Since $id+\phi$ and $id+\phi'$ are continuous maps, then $\{\phi(w)\} = f_{\infty}(Im \ \gamma)$ and $\{\phi'(w)\} = g(Im \ \gamma)$. So the following diagram commutes:

$$ \xymatrix{  X+_{f}W \ar[r]_{id+\phi} \ar[rd]_{id+\phi'} & X+_{f_{\infty}}f_{\infty}(X) \ar[d]_{id+\psi} \\
                & X+_{g}Z } $$

Since $id+\phi$ is a quotient map and  $id+\phi'$ is continuous, then the map $id+\psi:  X+_{f_{\infty}}f_{\infty}(X) \rightarrow X+_{g}Z$ is continuous. 

So, for every compactification $X+_{g}Z$ that it is $\Phi_{p}$-accessible, there is a morphism $X+_{f_{\infty}}f_{\infty}(X) \rightarrow X+_{g}Z$ (which is unique since $X$ is a dense subset of the compactification), which implies that $X+_{f_{\infty}}f_{\infty}(X)$ is the limit of all $\Phi_{p}$-accessible compactifications of $X$.
\end{proof}

\begin{prop}\label{changebasepoint}Let $(X,d)$ be a proper $\delta$-hyperbolic geodesic space and $p\in X$. If $X+_{f}W$ is a perspective compactification of $X$ that is $\Phi_{p}$-accessible, then it is $\Phi_{q}$-accessible for every $q \in X$. of $(X,\varepsilon_{d})$. 
\end{prop}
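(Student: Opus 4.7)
The plan is to verify both conditions of $\Phi_{q}$-accessibility by transferring information across a finite-Hausdorff-distance pairing between rays in $\Phi_{p}$ and rays in $\Phi_{q}$. The geometric input I will invoke is a standard fact for proper $\delta$-hyperbolic geodesic spaces (see \cite{BH}): for each $\gamma \in \Phi_{p}$ there is a $\gamma' \in \Phi_{q}$ at finite Hausdorff distance from $\gamma$ (obtained by Arzel\`a--Ascoli applied to the geodesic segments $[q,\gamma(n)]$, together with the thin-triangles estimate asserting that asymptotic rays are uniformly close), and symmetrically for rays in $\Phi_{q}$.

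For condition (1), I would start with $w \in W$ and use $\Phi_{p}$-accessibility to choose $\gamma \in \Phi_{p}$ with $f(Cl_{X}(\mathrm{Im}\,\gamma)) = \{w\}$. Then I pass to a $\gamma' \in \Phi_{q}$ at finite Hausdorff distance from $\gamma$. Since $X+_{f}W$ is perspective with respect to $\varepsilon_{d}$, the remark after \textbf{Proposition \ref{closesameboundary}} applies and gives $Cl_{X+_{f}W}(\mathrm{Im}\,\gamma') - X = Cl_{X+_{f}W}(\mathrm{Im}\,\gamma) - X = \{w\}$. Unfolding the Artin--Wraith description, this trace on $W$ is $Cl_{W}(f(Cl_{X}(\mathrm{Im}\,\gamma')))$, so $f(Cl_{X}(\mathrm{Im}\,\gamma')) \subseteq \{w\}$. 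On the other hand $Cl_{X}(\mathrm{Im}\,\gamma')$ is closed and unbounded in the proper space $X$, hence noncompact, so by \textbf{Proposition \ref{compactglue}} we get $f(Cl_{X}(\mathrm{Im}\,\gamma')) \neq \emptyset$ and therefore $f(Cl_{X}(\mathrm{Im}\,\gamma')) = \{w\}$.

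For condition (2), I would start with an arbitrary $\gamma' \in \Phi_{q}$, pair it with a $\gamma \in \Phi_{p}$ at finite Hausdorff distance via the same hyperbolic-geometry step, apply condition (2) of $\Phi_{p}$-accessibility to conclude $\#f(Cl_{X}(\mathrm{Im}\,\gamma)) = 1$, and then repeat the closure-equality argument above (together with nonemptiness from \textbf{Proposition \ref{compactglue}}) to conclude $\#f(Cl_{X}(\mathrm{Im}\,\gamma')) = 1$.

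The main obstacle is the very first geometric step: producing, for each ray based at $p$, a ray based at $q$ at finite Hausdorff distance, which is where properness and $\delta$-hyperbolicity are both genuinely used. Everything downstream is a formal application of \textbf{Proposition \ref{closesameboundary}} and \textbf{Proposition \ref{compactglue}}, and the whole argument is clean because perspectivity of $X+_{f}W$ translates finite Hausdorff distance (a metric notion) into equality of boundary traces (a topological notion) without any further bookkeeping.
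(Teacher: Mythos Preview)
Your argument is correct, but it takes a different route from the paper's. The paper proceeds in two lines: since $X+_{f}W$ is $\Phi_{p}$-accessible, the preceding proposition (that the hyperbolic compactification is the limit of all $\Phi_{p}$-accessible compactifications) furnishes a morphism $id+\phi: X+_{f_{\infty}}\partial_{\infty}(X) \rightarrow X+_{f}W$; since the hyperbolic compactification is $\Phi_{q}$-accessible for every $q$, \textbf{Proposition \ref{quotientaccess}} transfers this to $X+_{f}W$.

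The difference is one of packaging. The geometric content you invoke---that rays based at $p$ and $q$ are paired at finite Hausdorff distance---is precisely what makes $X+_{f_{\infty}}\partial_{\infty}(X)$ simultaneously $\Phi_{q}$-accessible for all $q$, so the paper is hiding that fact inside the known properties of the Gromov boundary and then pushing it down via a quotient. Your approach bypasses the hyperbolic compactification entirely and replays the Hausdorff-distance argument directly on $X+_{f}W$ via \textbf{Proposition \ref{closesameboundary}} and \textbf{Proposition \ref{compactglue}}. Your proof is more self-contained (it does not depend on the preceding limit characterization) but slightly longer; the paper's is shorter and exploits the machinery it has just built.
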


\begin{proof}Since the space $X+_{f}W$ is $\Phi_{p}$-accessible, then there is a quotient map $id+\phi: X+_{f_{\infty}}\partial_{\infty}(X) \rightarrow X+_{f}W$. Since $X+_{f_{\infty}}\partial_{\infty}(X)$ is $\Phi_{q}$-accessible $\forall q \in X$, we have that $X+_{f}W$ is $\Phi_{q}$-accessible $\forall q \in X$ (by \textbf{Proposition \ref{quotientaccess}}).
\end{proof}

\begin{prop}Let $(X,d)$ and $(Y,d')$ be proper $\delta$-hyperbolic geodesic spaces and $\pi: Y \rightarrow X$ a quasi-isometry with quasi-inverse $\varpi$. Then the functor $\Pi$ sends $\Phi_{p}$-accessible compactifications to $\Phi_{\varpi(p)}$-accessible compactifications.
\end{prop}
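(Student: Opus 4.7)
The plan is to bootstrap Proposition \ref{pushforwardaccess}, which gives that $\Pi(X+_{f}W) = Y+_{f^{\ast}}W$ is $\varpi(\Phi_{p})$-accessible, into $\Phi_{\varpi(p)}$-accessibility. The essential tool is the Morse lemma for proper $\delta$-hyperbolic geodesic spaces: every quasi-geodesic ray (even discontinuous) lies at finite Hausdorff distance from a geodesic ray having the same starting point and the same limit at infinity. Combined with Proposition \ref{closesameboundary} (and its two-sided version in the remark), which ensures that subsets at finite Hausdorff distance determine the same boundary set in any perspective compactification, this will let me swap each quasi-geodesic $\varpi\circ\gamma$ or $\pi\circ\tilde{\gamma}$ for an honest geodesic ray while preserving the limit point in $W$.

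For Condition~1 of $\Phi_{\varpi(p)}$-accessibility, fix $w\in W$ and use $\varpi(\Phi_{p})$-accessibility to pick $\gamma\in\Phi_{p}$ with $f^{\ast}(Cl_{Y}(Im \ \varpi\circ\gamma))=\{w\}$. The composition $\varpi\circ\gamma$ is a quasi-geodesic ray in $Y$ starting at $\varpi(p)$, so the Morse lemma produces $\tilde{\gamma}\in\Phi_{\varpi(p)}$ with $Im \ \tilde{\gamma}$ at finite Hausdorff distance from $Im \ \varpi\circ\gamma$. Since $Y+_{f^{\ast}}W$ is perspective with respect to $\varepsilon_{d'}$ (by Proposition \ref{Teorema1}), Proposition \ref{closesameboundary} applied in both directions yields $f^{\ast}(Cl_{Y}(Im \ \tilde{\gamma}))=f^{\ast}(Cl_{Y}(Im \ \varpi\circ\gamma))=\{w\}$.

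For Condition~2, take any $\tilde{\gamma}\in\Phi_{\varpi(p)}$ and consider $\pi\circ\tilde{\gamma}$, a quasi-geodesic ray in $X$ whose starting point $\pi(\varpi(p))$ is at bounded distance from $p$ because $\pi\circ\varpi$ is close to $id_{X}$. The Morse lemma together with $\delta$-hyperbolic stability of rays at nearby basepoints yields $\gamma\in\Phi_{p}$ with $Im \ \gamma$ at finite Hausdorff distance from $Im \ \pi\circ\tilde{\gamma}$. Then $\Phi_{p}$-accessibility gives $f(Cl_{X}(Im \ \gamma))=\{w\}$, and Proposition \ref{closesameboundary} transfers this to $f(Cl_{X}(Im \ \pi\circ\tilde{\gamma}))=\{w\}$. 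Because $Im \ \tilde{\gamma}$ is already closed in $Y$, the definition of the pullback reduces to $f^{\ast}(Cl_{Y}(Im \ \tilde{\gamma}))=Cl_{W}(f(Cl_{X}(Im \ \pi\circ\tilde{\gamma})))=\{w\}$, a singleton as required.

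The main obstacle I anticipate is technical rather than conceptual: invoking the Morse lemma in the correct form. The compositions $\varpi\circ\gamma$ and $\pi\circ\tilde{\gamma}$ need not be continuous, so I need the version of the Morse lemma that applies to possibly discontinuous $(\lambda,\mu)$-quasi-geodesics. A secondary nuisance is forcing the approximating geodesic ray to start exactly at the required basepoint, which is handled by prepending a geodesic segment of bounded length and appealing to $\delta$-hyperbolic stability of rays with nearby basepoints and common limit at infinity.
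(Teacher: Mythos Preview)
Your proposal is correct and follows essentially the same approach as the paper: both arguments use the Morse lemma (stability of quasi-geodesics in $\delta$-hyperbolic spaces, cited in the paper as Lemma III.3.1 of \cite{BH}) to replace $\varpi\circ\gamma$ or $\pi\circ\tilde{\gamma}$ by genuine geodesic rays, and then invoke Proposition~\ref{closesameboundary} to conclude that sets at finite Hausdorff distance have the same boundary in a perspective compactification. The only differences are cosmetic: for Condition~2 the paper pulls the geodesic $\lambda\in\Phi_{p}$ back via $\varpi$ and compares in $Y$, whereas you compute $f^{\ast}$ directly from the pullback formula in $X$; for Condition~1 the paper routes through $\pi^{-1}(Im\ \gamma)$, while you appeal to $\varpi(\Phi_{p})$-accessibility from Proposition~\ref{pushforwardaccess} directly---your route is slightly cleaner but equivalent.
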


\begin{proof}Let $X+_{f}W$ be a perspective compactification of $X$ that is $\Phi_{p}$-accessible. We have that $\Pi(X+_{f}W) = Y+_{f^{\ast}}W$. By \textbf{Proposition \ref{pushforwardaccess}}, the compactification $Y+_{f^{\ast}}W$ is $\varpi(\Phi_{p})$-accessible. 

Let $\gamma \in \Phi_{\varpi(p)}$. Then $\pi \circ \gamma$ is a quasi-geodesic ray, which implies, by Lemma III.3.1 of \cite{BH}, that there exists a geodesic ray $\lambda \in \Phi_{p}$ such that $Im \ \lambda$ and $Im \  \pi \circ \gamma$ have finite Hausdorff distance. Then $Im \ \varpi \circ \lambda$ and $Im \ \varpi \circ \pi \circ \gamma$ have finite Hausdorff distance. However, $Im \ \varpi \circ \pi \circ \gamma$ and $Im \ \gamma$ have finite Hausdorff distance, since $\pi$ and $\varpi$ are quasi-inverses. Then $Im \ \gamma$ and $Im \ \varpi \circ \lambda$  have finite Hausdorff distance, which implies that $f^{\ast}(Im \ \gamma) = f^{\ast}(Cl_{Y}(Im \ \varpi \circ \lambda)) = f(\lambda)$, which implies that $\# f^{\ast}(Im \ \gamma) = 1$.

Let $x \in W$. There exists $\gamma \in \Phi_{p}$ such that $f(Im \ \gamma) = \{x\}$. Then $\varpi \circ \gamma$  is a quasi-geodesic such that its image has finite Hausdorff distance from $\pi^{-1}(Im \ \gamma)$. There exists a geodesic $\lambda \in  \Phi_{\varpi(p)}$ such that its image has finite Hausdorff distance from $Im \ \varpi \circ \gamma$. Then $Im \ \lambda$ and $\pi^{-1}(Im \ \gamma)$ have finite Hausdorff distance, which implies that $f^{\ast}(Im \ \lambda) = f^{\ast}(Cl_{Y}(\pi^{-1}(Im \ \gamma))) =  f(Cl_{X}(\pi(Cl_{Y}(\pi^{-1}(Im \ \gamma))))) = f(Cl_{X}(\pi(\pi^{-1}(Im \ \gamma)))) \subseteq f(Im \ \gamma) = \{x\}$. Then $f^{\ast}(Im \ \lambda) = \{x\}$, since $g(Im \ \gamma)$ is not empty (because $Im \ \gamma$ is not bounded and $Y+_{f^{\ast}}W$ is compact, by \textbf{Proposition \ref{compactglue}}).

Thus $\Pi(X+_{f}W)$ is $\Phi_{\varpi(p)}$-accessible.
\end{proof}

\begin{teo}\label{preserveshyperboliccompactification}Let $(X,d)$ and $(Y,d')$ be proper $\delta$-hyperbolic geodesic spaces and $\pi: Y \rightarrow X$ a quasi-isometry with quasi-inverse $\varpi$. Then the functor $\Pi$ sends the hyperbolic compactification of $X$ to the hyperbolic compactification of $Y$.
\end{teo}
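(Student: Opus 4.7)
The plan is to combine the two preceding propositions with the limit characterization of the hyperbolic compactification. Fix $p \in X$, let $q = \varpi(p) \in Y$, and write $X+_{f_\infty}\partial_\infty(X)$ and $Y+_{g_\infty}\partial_\infty(Y)$ for the two hyperbolic compactifications. The previous proposition applied to $\pi$ shows immediately that $\Pi(X+_{f_\infty}\partial_\infty(X))$ is $\Phi_q$-accessible, so the limit characterization of the hyperbolic compactification produces a unique morphism $id+\phi \colon Y+_{g_\infty}\partial_\infty(Y) \to \Pi(X+_{f_\infty}\partial_\infty(X))$ in $MPers(\varepsilon_{d'})$.

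For the reverse map, I would run the same argument symmetrically, exploiting that $\varpi$ is itself a quasi-isometry with quasi-inverse $\pi$. Applying the previous proposition to $\varpi$ shows that $\Lambda(Y+_{g_\infty}\partial_\infty(Y))$ is $\Phi_{\pi(q)}$-accessible as a compactification of $X$; the limit proposition on $X$ then yields a morphism $X+_{f_\infty}\partial_\infty(X) \to \Lambda(Y+_{g_\infty}\partial_\infty(Y))$; and applying the functor $\Pi$ together with the fact that $\Pi \circ \Lambda$ is the identity functor on $MPers(\varepsilon_{d'})$ (guaranteed by Proposition \ref{Teorema1}, since proper geodesic spaces are second countable and hence both coarse structures sit inside metrizable compactifications) produces a morphism $id+\phi' \colon \Pi(X+_{f_\infty}\partial_\infty(X)) \to Y+_{g_\infty}\partial_\infty(Y)$.

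To conclude, the two compositions of $id+\phi$ and $id+\phi'$ are continuous self-maps of Hausdorff compactifications that restrict to the identity on the dense subspace $Y$, hence each composition is the identity by the standard density-plus-Hausdorff argument. This exhibits $\Pi(X+_{f_\infty}\partial_\infty(X))$ and $Y+_{g_\infty}\partial_\infty(Y)$ as isomorphic objects of $MPers(\varepsilon_{d'})$, which is precisely the content of the theorem. The main obstacle is not conceptual but bookkeeping: one must check that the previous proposition is genuinely symmetric in $\pi$ and $\varpi$ so that it can be reapplied with the roles swapped, and that transporting the morphism $X+_{f_\infty}\partial_\infty(X) \to \Lambda(Y+_{g_\infty}\partial_\infty(Y))$ across $\Pi$ and then identifying $\Pi\Lambda$ with the identity really does land in $MPers(\varepsilon_{d'})$ as a morphism restricting to $id_Y$.
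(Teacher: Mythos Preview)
Your proposal is correct and rests on the same two ingredients the paper uses: the characterization of the hyperbolic compactification as the limit of all $\Phi_{p}$-accessible compactifications, and the preceding proposition that $\Pi$ (and, symmetrically, $\Lambda$) preserves $\Phi$-accessibility. The only difference is in how the conclusion is drawn. The paper observes that, because $\Pi$ and $\Lambda$ are inverse isomorphisms of categories and each carries $\Phi$-accessible compactifications to $\Phi$-accessible compactifications, $\Pi$ restricts to an isomorphism between the full subcategories of $\Phi$-accessible objects and therefore preserves their limit; this finishes the proof in one line. You instead unpack this preservation-of-limits statement by hand: you produce a morphism in each direction from the universal property of the limit on each side, transport one of them through $\Pi$ using $\Pi\Lambda = \mathrm{id}$, and then invoke density in a Hausdorff compactification to see that the two composites are identities. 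Both arguments are valid; the paper's is shorter and more conceptual, while yours is more explicit and avoids appealing to the abstract slogan ``isomorphisms of categories preserve limits.'' Your bookkeeping concerns (symmetry of the preceding proposition in $\pi$ and $\varpi$, and the use of second countability of proper metric spaces to place everything in $MPers$ so that $\Pi\Lambda$ is literally the identity) are legitimate and correctly handled.
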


\begin{proof}We have that the functor $\Pi$ sends compactifications $\Phi_{p}$-accessible to compactifications $\Phi_{\varpi(p)}$-accessible. By \textbf{Proposition \ref{changebasepoint}}, the functor $\Pi$ sends compactifications that are $\Phi_{p}$-accessible $\forall p \in X$ to compactifications that are $\Phi_{q}$-accessible to every $q \in Y$. Since $\Pi$ is an isomorphism of categories, it preserves the limits of these compactifications. So it sends the hyperbolic compactification of $X$ to the hyperbolic compactification of $Y$.
\end{proof}

As an immediate consequence we have another proof of the well known fact that if $X$ and $Y$ are quasi-isometric hyperbolic spaces, then they have homeomorphic hyperbolic boundaries.

\section{Relatively hyperbolic groups}

\begin{prop}\label{continuouscriteriaforisomorphism}Let $(X,\varepsilon)$ and $(Y,\eta)$be locally compact paracompact Hausdorff spaces with proper coarsely connected coarse structures and $\pi: Y \rightarrow X$ be a continuous coarse equivalence. If $X+_{f}Z$ and $Y+_{g}Z'$ are perspective compactifications such that the map $\pi$ extends to a continuous map $\pi+\psi: Y+_{g}Z' \rightarrow X+_{f}Z$, then there is a morphism in $Pers(\eta)$ given by $id+ \psi: Y+_{g}Z' \rightarrow \Pi(X+_{f}Z)$. Moreover, if $\psi$ is a homeomorphism, then $\Pi(X+_{f}Z)$ is isomorphic to $Y+_{g}Z'$ and if $Z = Z'$ and $\psi = id_{Z}$, then $\Pi(X+_{f}Z) = Y+_{g}Z$.
\end{prop}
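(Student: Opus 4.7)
The plan is to unwind the statement as a direct consequence of the Artin--Wraith pullback machinery developed in the preliminaries, since it is in essence a universal property of the pullback $f^{\ast}$.

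First, I would invoke the remark immediately following \textbf{Proposition \ref{eightlemma}}, applied with the eight-lemma's data chosen as $(X,Y,Z,W) := (X,Y,Z',Z)$, $\varpi := \psi$, and with our $g$ playing the role of the eight-lemma's $g$. The hypothesis that $\pi+\psi: Y+_{g}Z' \to X+_{f}Z$ is continuous (together with the continuity of $\pi$ and $\psi$) places us exactly in the situation of that remark, which then yields that $id+\psi: Y+_{g}Z' \to Y+_{f^{\ast}}Z$ is continuous, where $f^{\ast}$ is the pullback of $f$ along $\pi$ and $id_{Z}$. By \textbf{Proposition \ref{pullbackdecompactificacoes}} this target is precisely $\Pi(X+_{f}Z)$, and by \textbf{Proposition \ref{Teorema1}} it belongs to $Pers(\eta)$. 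Since $id+\psi$ is the identity on $Y$, it qualifies as a morphism of $Pers(\eta)$, establishing the first assertion.

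For the second assertion, suppose $\psi$ is a homeomorphism, so $id+\psi$ is a continuous bijection. Its source is compact Hausdorff by hypothesis and its target is compact Hausdorff by \textbf{Propositions \ref{compactglue}} and \textbf{\ref{Teorema1}}; hence $id+\psi$ is automatically a homeomorphism and therefore an isomorphism in $Pers(\eta)$, its inverse being $id+\psi^{-1}$, which is again the identity on $Y$. For the third assertion, specialise to $Z = Z'$ and $\psi = id_{Z}$: the preceding paragraph then shows that $id: Y+_{g}Z \to Y+_{f^{\ast}}Z$ is a homeomorphism, so \textbf{Proposition \ref{continuousidentity}} applied in both directions forces $g(A) = f^{\ast}(A)$ for every $A \in Closed(Y)$, i.e.\ $g = f^{\ast}$. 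Hence $Y+_{g}Z = Y+_{f^{\ast}}Z = \Pi(X+_{f}Z)$ on the nose.

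There is no serious obstacle. The only point worth flagging is the Hausdorffness of $\Pi(X+_{f}Z)$, which is indispensable for the compact-to-Hausdorff continuous bijection argument in the second paragraph; this is not automatic from \textbf{Proposition \ref{pullbackdecompactificacoes}} (see the remark there warning that $\Pi$ can break Hausdorffness in general), but is supplied by the restriction of $\Pi$ to the perspective subcategories given in \textbf{Proposition \ref{Teorema1}}.
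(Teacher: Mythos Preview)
Your proposal is correct and follows essentially the same route as the paper: both invoke \textbf{Proposition~\ref{eightlemma}} (via the remark following it) to obtain the continuity of $id+\psi: Y+_{g}Z' \to Y+_{f^{\ast}}Z = \Pi(X+_{f}Z)$, which is the entire content of the paper's argument before it declares ``the rest is immediate.'' Your treatment of the ``moreover'' clauses---the compact-to-Hausdorff bijection trick for the isomorphism, and \textbf{Proposition~\ref{continuousidentity}} in both directions to force $g=f^{\ast}$---is more explicit than the paper's, and your flag about Hausdorffness of $\Pi(X+_{f}Z)$ is a genuine point the paper glosses over; note only that \textbf{Proposition~\ref{Teorema1}}(1) as stated asks for \emph{both} $\pi$ and its quasi-inverse to be continuous, so strictly speaking you are appealing to the half of that result (that $\Pi$ lands in $Pers(\eta)$ when $\pi$ alone is continuous) rather than the full isomorphism statement.
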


\begin{proof}The map $\pi$ induces a continuous map $\pi+\psi: Y+_{g}Z' \rightarrow X+_{f}Z$. We also have that the map $\pi+id: Y+_{f^{\ast}}Z \rightarrow X+_{f}Z$ is continuous, where $f^{\ast}$ is the pullback of $f$ with respect to $\pi$ and $id_{Z}$. By the \textbf{Proposition \ref{eightlemma}}, the map $id+\psi: Y+_{g}Z' \rightarrow Y+_{f^{\ast}}Z$ is continuous. But $Y+_{f^{\ast}}Z = \Pi(X+_{f}Z)$. 

The rest is immediate.
\end{proof}

\begin{prop}\label{preserviingrelativelyhyperboliccompactification}Let $G$ and $H$ be finitely generated groups and $\pi: G \rightarrow H$ a quasi-isometry. Then the induced functor $\Pi: Pers(\varepsilon_{H}) \rightarrow Pers(\varepsilon_{G})$ sends the compactifications with the relatively hyperbolic compactifications of $H$ with respect to subgroups that are not relatively hyperbolic to relatively hyperbolic compactifications of $G$ with respect to subgroups that are not relatively hyperbolic to relatively hyperbolic .
\end{prop}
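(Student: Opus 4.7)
The plan is to reduce the statement, via \textbf{Proposition \ref{continuouscriteriaforisomorphism}}, to a quasi-isometric rigidity result for relatively hyperbolic groups due to Groff (the paper \cite{Gro} mentioned in the acknowledgements). That result states: if $(H, \mathcal{P})$ is a relatively hyperbolic pair whose peripheral subgroups are themselves not relatively hyperbolic, and $\pi: G \to H$ is a quasi-isometry of finitely generated groups, then there is a collection $\mathcal{Q}$ of subgroups of $G$, again non-relatively-hyperbolic, such that $(G, \mathcal{Q})$ is a relatively hyperbolic pair, and moreover $\pi$ extends to a continuous map $\pi + \psi: G +_{\partial_{c}} \partial_{B}(G, \mathcal{Q}) \to H +_{\partial_{c}} \partial_{B}(H, \mathcal{P})$ in which $\psi$ is a homeomorphism of the Bowditch boundaries.

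Given this input, I would verify the hypotheses of \textbf{Proposition \ref{continuouscriteriaforisomorphism}} as follows. First, $\pi$ is trivially continuous (discrete topology on $G$) and is a coarse equivalence between $(G, \varepsilon_{G})$ and $(H, \varepsilon_{H})$, since by \textbf{Proposition \ref{milnorsvarccoarse}} (or direct inspection) these coarse structures agree with the bounded coarse structures of the word metrics, with respect to which quasi-isometries are coarse equivalences. Second, both $H +_{\partial_{c}} \partial_{B}(H, \mathcal{P})$ and $G +_{\partial_{c}} \partial_{B}(G, \mathcal{Q})$ belong to $Pers(\varepsilon_{H})$ and $Pers(\varepsilon_{G})$ respectively, by \textbf{Proposition \ref{convergenceisperspective}} (which gives group-theoretic perspectivity from the convergence action on the Bowditch boundary) combined with \textbf{Proposition \ref{coarsegroup}} (which identifies group-theoretic with coarse geometric perspectivity for the natural coarse structures). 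Third, Groff's extension $\pi + \psi$ is exactly the continuous extension required, with $\psi$ a homeomorphism.

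Applying \textbf{Proposition \ref{continuouscriteriaforisomorphism}} then yields an isomorphism $id + \psi: G +_{\partial_{c}} \partial_{B}(G, \mathcal{Q}) \to \Pi(H +_{\partial_{c}} \partial_{B}(H, \mathcal{P}))$ in $Pers(\varepsilon_{G})$, which is exactly the desired conclusion: the image under $\Pi$ of a relatively hyperbolic compactification of $H$ with non-relatively-hyperbolic peripherals is isomorphic to a relatively hyperbolic compactification of $G$ with non-relatively-hyperbolic peripherals. The main obstacle is concentrated entirely in invoking Groff's theorem in the correct form: one needs not just the classical quasi-isometric invariance of the Bowditch boundary as an abstract topological space, but the stronger statement that the specific quasi-isometry $\pi$ extends continuously to the boundary compactifications. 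Once that geometric input is in hand, everything else is a formal application of the categorical and Artin-Wraith machinery of Sections 2 and 3.
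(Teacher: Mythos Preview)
Your proposal is correct and follows essentially the same route as the paper: both arguments use \textbf{Propositions \ref{convergenceisperspective}} and \textbf{\ref{coarsegroup}} to place the attractor-sum compactifications in $Pers(\varepsilon_{H})$ and $Pers(\varepsilon_{G})$, invoke Groff's quasi-isometric rigidity \cite{Gro} to produce the continuous extension $\pi+\psi$ with $\psi$ a homeomorphism, and then apply \textbf{Proposition \ref{continuouscriteriaforisomorphism}} to conclude. The only cosmetic difference is that the paper unpacks Groff's extension explicitly through the cusped spaces $X(G,\mathcal{Q})$ and $X(H,\mathcal{P})$ of Groves--Manning \cite{GrM} (whose hyperbolic boundaries are the Bowditch boundaries), whereas you cite the extension as a single black box; this is a matter of citation granularity, not of strategy.
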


\begin{obs}In a fact, $\Pi$ gives a bijection between the relatively hyperbolic compactifications of $H$ with respect to subgroups that are not relatively hyperbolic to relatively hyperbolic  and the relatively hyperbolic compactifications of $G$ with respect to subgroups that are not relatively hyperbolic, since this proposition also holds for $\Pi^{-1}$. As it is clear in the following proof, this bijection is the one given by Theorem 6.3 of \cite{Gro}.
\end{obs}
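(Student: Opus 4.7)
The plan is to reduce the problem to Proposition \ref{continuouscriteriaforisomorphism} via Groff's quasi-isometric invariance of peripheral structures. Fix any relatively hyperbolic compactification $H+_{\partial_c}\partial_B(H,\mathcal{P})$ where no $P \in \mathcal{P}$ is itself relatively hyperbolic. By Proposition \ref{convergenceisperspective} this lies in $EPers_0(H)$, hence by Proposition \ref{coarsegroup} in $Pers(\varepsilon_H)$, so $\Pi$ is applicable.

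Next I would invoke Theorem 6.3 of \cite{Gro}: the quasi-isometry $\pi: G \to H$ furnishes a relatively hyperbolic pair $(G,\mathcal{Q})$ whose peripheral subgroups are also not relatively hyperbolic, together with a canonical homeomorphism $\alpha: \partial_B(G,\mathcal{Q}) \to \partial_B(H,\mathcal{P})$ built from the quasi-isometry. The geometric content is that any quasi-geodesic ray in $G$ accumulating at a (conical or bounded parabolic) point $\xi \in \partial_B(G,\mathcal{Q})$ is sent by $\pi$ to a quasi-geodesic ray in $H$ accumulating at $\alpha(\xi)$, and vice versa via a quasi-inverse $\varpi$.

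The technical heart of the argument is to promote $\pi$ and $\alpha$ to a continuous extension $\pi+\alpha: G+_{\partial_c}\partial_B(G,\mathcal{Q}) \to H+_{\partial_c}\partial_B(H,\mathcal{P})$. Continuity on the discrete part $G$ is automatic. For continuity at a boundary point $\xi$, one needs that whenever a net $(g_\gamma) \subseteq G$ converges to $\xi$ in the Bowditch compactification, the net $(\pi(g_\gamma))$ converges to $\alpha(\xi)$. This is exactly where Groff's construction of $\alpha$ is used: convergence to a Bowditch boundary point is controlled by quasi-geodesic approximations, quasi-isometries preserve such approximations up to finite Hausdorff distance, and Proposition \ref{closesameboundary} then forces the two boundary limits to coincide.

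Once $\pi+\alpha$ is continuous, Proposition \ref{continuouscriteriaforisomorphism} applied with $\psi = \alpha$ (a homeomorphism) yields an isomorphism $\Pi(H+_{\partial_c}\partial_B(H,\mathcal{P})) \cong G+_{\partial_c}\partial_B(G,\mathcal{Q})$, which is a relatively hyperbolic compactification of $G$ whose peripheral subgroups are not relatively hyperbolic, as required. The symmetric statement for a quasi-inverse of $\pi$ gives the converse direction, justifying the bijection mentioned in the remark. The principal obstacle is the continuity of $\pi+\alpha$, which is not accessible from the general coarse-categorical framework of this paper and genuinely relies on Groff's theorem.
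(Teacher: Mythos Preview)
Your proposal follows the same overall strategy as the paper: identify the Bowditch compactifications as perspective via Propositions \ref{convergenceisperspective} and \ref{coarsegroup}, invoke Groff to obtain the matching peripheral structure $(G,\mathcal{Q})$ and a boundary homeomorphism, establish that $\pi$ extends continuously across the two Bowditch compactifications, and conclude with Proposition \ref{continuouscriteriaforisomorphism}.

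The one place where your argument diverges from the paper is the continuity step, and here your sketch is looser than it should be. You argue directly in $G$ and $H$ that ``convergence to a Bowditch boundary point is controlled by quasi-geodesic approximations'' and then appeal to Proposition \ref{closesameboundary}. But $G$ and $H$ are only \emph{relatively} hyperbolic, so quasi-geodesics in their Cayley graphs need not track geodesics, and convergence to a bounded parabolic point is not witnessed by a single quasi-geodesic ray in the group in the way your sentence suggests. The paper avoids this by passing to the cusped spaces $X(G,\mathcal{Q})$ and $X(H,\mathcal{P})$: Groff's Theorem 6.3 says $\pi$ extends to a quasi-isometry $\pi'$ of these cusped spaces, which are genuinely $\delta$-hyperbolic (Theorem 3.25 of \cite{GrM}) with Gromov boundaries equal to the Bowditch boundaries. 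The standard extension of quasi-isometries between hyperbolic spaces to their Gromov compactifications then gives the continuous map $\pi+\psi$ after restricting to $G \cup \partial_B(G,\mathcal{Q})$ and $H \cup \partial_B(H,\mathcal{P})$; the fact that $\psi$ is a homeomorphism is Corollary 6.5 of \cite{Gro}. This cusped-space detour is what makes the continuity rigorous, and it is the main point your write-up is missing.
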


\begin{proof}Let $(H, \mathcal{P})$ be a relatively hyperbolic pair. We know that the compactification $H+_{\partial_{c}}\partial_{B}(H,\mathcal{P}) \in EPers(H)$ (\textbf{Proposition \ref{convergenceisperspective}}), which implies that $H+_{\partial_{1c}}\partial_{B}(H,\mathcal{P}) \in Pers(\varepsilon_{H})$ (\textbf{Proposition \ref{coarsegroup})}. By Theorem 6.3 of \cite{Gro}, there exists $\mathcal{Q}$ such that $(G,\mathcal{Q})$ is a relatively hyperbolic pair and the map $\pi$ extends to a quasi-isometry $\pi': X(G,\mathcal{Q}) \rightarrow X(H,\mathcal{P})$, where $X(\Gamma, \mathcal{R})$ is the cusped space of the group $\Gamma$ with respect to a set of generators (that we omit here) and a set of subgroups $\mathcal{R}$ (see \cite{GrM} for the definition). Since $(G,\mathcal{Q})$ and $(H, \mathcal{P})$ are relatively hyperbolic pairs, we have that $X(G,\mathcal{Q})$ and $X(H,\mathcal{P})$ are hyperbolic (Theorem 3.25 of \cite{GrM}) and their hyperbolic boundaries are the Bowditch boundaries of the pairs. So the map $\pi'$ induces a continuous map $\pi+\psi: G+_{\partial_{2c}}\partial_{B}(G,\mathcal{Q}) \rightarrow H+_{\partial_{1c}}\partial_{B}(H,\mathcal{P})$, where $\psi$ is a homeomorphism (given by Corollary 6.5 of \cite{Gro}). Then, by \textbf{Proposition \ref{continuouscriteriaforisomorphism}}, the map $id+\psi: G+_{\partial_{2c}}\partial_{B}(G,\mathcal{Q}) \rightarrow \Pi(H+_{\partial_{1c}}\partial_{B}(H,\mathcal{P}))$ is a homeomorphism.
\end{proof}

\section{Floyd compactifications}

\begin{prop}(Changing maps) Let $(X,\varepsilon)$ and $(Y,\eta)$ be locally compact paracompact Hausdorff spaces with proper coarsely connected coarse structures, $X+_{f}Z$ and $Y+_{g}Z'$ compactifications with the perspectivity property and $\pi+\varpi: Y+_{g}Z' \rightarrow X+_{f}Z$ a continuous map. If $\pi': Y \rightarrow X$ is a continuous map that is close to $\pi$, then the map  $\pi'+\varpi: Y+_{g}Z' \rightarrow X+_{f}Z$ is continuous.
\end{prop}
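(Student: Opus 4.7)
The plan is to use the pullback machinery from Propositions \ref{univproppullback} and \ref{continuousidentity}. Let $f^{\ast}$ denote the pullback of $f$ with respect to $\pi$ and $\varpi$, and let $f^{\ast\ast}$ denote the pullback of $f$ with respect to $\pi'$ and $\varpi$. Since $\pi+\varpi: Y+_{g}Z' \to X+_{f}Z$ is continuous, the second part of Proposition \ref{univproppullback} gives that $id+id: Y+_{g}Z' \to Y+_{f^{\ast}}Z'$ is continuous, which by Proposition \ref{continuousidentity} is equivalent to $g(A) \subseteq f^{\ast}(A)$ for every closed $A \subseteq Y$. On the other hand, the first part of the same proposition ensures that $\pi'+\varpi: Y+_{f^{\ast\ast}}Z' \to X+_{f}Z$ is already continuous (using continuity of $\pi'$); hence in order to conclude that $\pi'+\varpi: Y+_{g}Z' \to X+_{f}Z$ is continuous it suffices, by composition, to show that $id+id: Y+_{g}Z' \to Y+_{f^{\ast\ast}}Z'$ is continuous, and by Proposition \ref{continuousidentity} this reduces to proving $g(A) \subseteq f^{\ast\ast}(A)$ for every closed $A \subseteq Y$.

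Since $g(A) \subseteq f^{\ast}(A)$ is already in hand, the goal reduces to the equality $f^{\ast}(A) = f^{\ast\ast}(A)$ for every closed $A \subseteq Y$. Unwinding the definition of the pullback, this amounts to $f(Cl_{X}\pi(A)) = f(Cl_{X}\pi'(A))$ as closed subsets of $Z$. A direct check from the description of closed sets in an Artin-Wraith glueing shows that for any $B \subseteq X$ one has $Cl_{X+_{f}Z}(B) = Cl_{X}B \cup f(Cl_{X}B)$, and in particular $Cl_{X+_{f}Z}(B) \cap Z = f(Cl_{X}B)$; hence the desired equality is equivalent to $Cl_{X+_{f}Z}(\pi(A)) - X = Cl_{X+_{f}Z}(\pi'(A)) - X$.

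For this, I would use the closeness of $\pi$ and $\pi'$: there exists $e \in \varepsilon$ with $\{(\pi(y),\pi'(y)): y \in Y\} \subseteq e$, and both $e$ and $e^{-1}$ lie in $\varepsilon \subseteq \varepsilon_{f}$ by the perspectivity of $X+_{f}Z$. A direct check from the definition of $\mathfrak{B}$ gives $\pi'(A) \subseteq \mathfrak{B}(\pi(A),e^{-1})$ and $\pi(A) \subseteq \mathfrak{B}(\pi'(A),e)$. Applying Proposition \ref{closesameboundary} to each of these inclusions yields the two opposite inclusions of the boundaries of $\pi(A)$ and $\pi'(A)$ in $X+_{f}Z$, and hence the required equality.

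The main obstacle is purely bookkeeping: keeping track of the two pullback maps $f^{\ast}$ and $f^{\ast\ast}$ through the universal property of Proposition \ref{univproppullback}, and making the translation between the set-theoretic pullback formula and boundary behaviour in the compactification via the Artin-Wraith closure identity. Once those identifications are in place, Proposition \ref{closesameboundary} applied in both directions concludes the argument.
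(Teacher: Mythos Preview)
Your argument is correct and rests on the same idea as the paper's proof: closeness of $\pi$ and $\pi'$ forces the two associated pullbacks of $f$ to coincide, after which the universal property of the pullback factors $\pi'+\varpi$ as a composition of continuous maps. The organisational difference is that the paper takes the pullback with respect to $(\pi,id_{Z})$ (respectively $(\pi',id_{Z})$) and then invokes the packaged statements \textbf{Proposition \ref{closemapsequalpullbacks}} and \textbf{Proposition \ref{eightlemma}} to obtain the diagram, whereas you pull back directly along $(\pi,\varpi)$ and $(\pi',\varpi)$ and verify the equality $f^{\ast}=f^{\ast\ast}$ by hand via \textbf{Proposition \ref{closesameboundary}} and the Artin--Wraith closure identity $Cl_{X+_{f}Z}(B)\cap Z=f(Cl_{X}B)$. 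Your route is a little more self-contained and, incidentally, does not need the hypothesis that $\pi,\pi'$ be coarse which appears in \textbf{Proposition \ref{closemapsequalpullbacks}}; the paper's route is shorter on the page because it cites those two propositions as black boxes. Both are the same proof at different levels of unpacking.
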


\begin{proof}By \textbf{Proposition \ref{closemapsequalpullbacks}} the pullback $f^{\ast}$ of $f$ with respect to $\pi$ and $id_{Z}$ is equal to the pullback of $f$ with respect to $\pi'$ and $id_{Z}$. By the definition of pullback and by \textbf{Proposition \ref{eightlemma}}, we have that all the maps of the following diagram are continuous:

$$ \xymatrix{  Y+_{g}Z' \ar[r]_{\pi+\varpi} \ar[d]_{id+id} & X+_{f}Z  \\
                Y+_{f^{\ast\ast}}Z' \ar[r]^{id+\varpi} & Y+_{f^{\ast}}Z \ar@<4pt>[u]^{\pi+id} \ar@<-4pt>[u]_{\pi'+id} } $$

Thus, the map  $\pi'+\varpi: Y+_{g}Z' \rightarrow X+_{f}Z$ is continuous.
\end{proof}

\begin{prop}\label{extensionfloyd}(Gerasimov-Potyagailo, Lemma 5.4 of \cite{GP}) Let $\Gamma_{1}$ and $\Gamma_{2}$ be locally finite connected graphs, $(X_{i},d_{i})$ the set of vertices of $\Gamma_{i}$ with its induced metric and $\pi: X_{2} \rightarrow X_{1}$ a $\alpha$-quasi-isometric map, where $\alpha: \N \rightarrow\N$ is a distorted map satisfying:

\begin{enumerate}
    \item $\exists D > 0$ such that $\forall n \in \N$, $\frac{f_{2}(n)}{f_{1}(\alpha(n))} \leqslant D$.
\end{enumerate}

Then we have that the map $\pi$ extends to a continuous map $\\ \pi+\psi: X_{2}+_{\partial_{f_{2}}}\partial_{f_{2}}(X_{2}) \rightarrow X_{1}+_{\partial_{f_{1}}}\partial_{f_{1}}(X_{1})$.
\end{prop}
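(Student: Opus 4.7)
The plan is to deduce continuity of the extension from a Lipschitz-type estimate between the Floyd metrics, namely a uniform bound $\delta_{v_{1}}(\pi(x),\pi(y)) \leq C \delta_{v_{2}}(x,y)$ with base point $v_{1} = \pi(v_{2})$. Since each Floyd compactification is by definition the Cauchy completion of the corresponding Floyd metric, such a Lipschitz bound automatically produces the unique continuous extension $\pi+\psi$ on the completions; recall that the resulting topology on $\partial_{f_{1}}(X_{1})$ is independent of the choice of base point, so taking $v_{1} = \pi(v_{2})$ costs nothing.

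To prove the estimate, I would carry out a Floyd-length comparison. For any path $\gamma = (x_{0},x_{1},\ldots,x_{k})$ in $\Gamma_{2}$ from $x$ to $y$, I would build an image path $\tilde{\gamma}$ in $\Gamma_{1}$ by concatenating, for each $i$, a geodesic in $\Gamma_{1}$ from $\pi(x_{i})$ to $\pi(x_{i+1})$. The upper bound from the $\alpha$-quasi-isometric property applied to $d_{2}(x_{i},x_{i+1})=1$ gives each such geodesic at most $\alpha(1)$ edges in $\Gamma_{1}$, so the resulting $\tilde{\gamma}$ is a genuine edge-path in $\Gamma_{1}$ from $\pi(x)$ to $\pi(y)$.

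The core estimate is then: for each edge $\{x_{i},x_{i+1}\}$ at distance $n=d_{2}(v_{2},\{x_{i},x_{i+1}\})$ from $v_{2}$ (contributing $f_{2}(n)$ to the Floyd length of $\gamma$), the corresponding segment of $\tilde{\gamma}$ contributes at most a constant multiple of $f_{2}(n)$ to the Floyd length in $\Gamma_{1}$. This reduces to showing that the image segment lies in a region of $\Gamma_{1}$ whose distance from $v_{1}$ is controlled below by something comparable to $\alpha(n)$, so that each of its at most $\alpha(1)$ edges carries $f_{1}$-weight at most a constant times $f_{1}(\alpha(n))$. The hypothesis $f_{2}(n)/f_{1}(\alpha(n)) \leq D$ then converts each such $f_{1}$-weight into a constant multiple of $f_{2}(n)$, and summing over the segment and over all edges of $\gamma$ yields Floyd-length$(\tilde{\gamma}) \leq D\alpha(1)\cdot$Floyd-length$(\gamma)$. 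Taking the infimum over $\gamma$ gives the desired Lipschitz bound with $C = D\alpha(1)$.

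The main obstacle will be the distance estimate itself: controlling how close the geodesic from $\pi(x_{i})$ to $\pi(x_{i+1})$ can stray toward $v_{1}$. This uses the quasi-isometric property of $\pi$ (applied to the pair $(v_{2},x_{i})$ with $d_{2}(v_{2},x_{i}) \geq n$) to bound $d_{1}(v_{1},\pi(x_{i}))$ below in terms of $n$, together with the condition $f_{1}(n)/f_{1}(n+1)\leq k$, which gives bounded ratios between values of $f_{1}$ at nearby integers and so absorbs the additive $O(\alpha(1))$ error from the length of the geodesic segment into the multiplicative constant. Pinning down this distance estimate is the only nonroutine step; everything else is a rearrangement of the weighted sum $\sum f_{1}(\,\cdot\,)$ using the hypothesis.
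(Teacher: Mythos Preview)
The paper does not supply its own proof of this proposition; it is quoted as Lemma~5.4 of Gerasimov--Potyagailo \cite{GP} and used as a black box. Your outline---prove a Lipschitz bound $\delta_{v_{1}}(\pi(x),\pi(y))\le C\,\delta_{v_{2}}(x,y)$ by pushing edge-paths forward along $\pi$, bounding the Floyd weight of each image segment using the $\alpha$-quasi-isometric inequalities together with the hypothesis $f_{2}(n)\le D\,f_{1}(\alpha(n))$ and the bounded-ratio property of $f_{1}$, and then pass to Cauchy completions---is exactly the argument Gerasimov and Potyagailo give. You have also correctly isolated the one genuine estimate (the lower bound on the distance of the image segments from $v_{1}$, coming from the co-Lipschitz side of the $\alpha$-quasi-isometric condition) as the nontrivial step.
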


\begin{prop}\label{extensionfloydhomeo} Let $\Gamma_{1}$ and $\Gamma_{2}$ be locally finite connected graphs, $(X_{i},d_{i})$ the set of vertices of $\Gamma_{i}$ with its induced metric and $\pi: X_{2} \rightarrow X_{1}$ a $\alpha$-quasi-isometry, where $\alpha: \N \rightarrow\N$ is a distorted map satisfying:

\begin{enumerate}
    \item $\exists D > 0$ such that $\forall n \in \N$, $\frac{f_{2}(n)}{f_{1}(\alpha(n))} \leqslant D$ and $\frac{f_{1}(n)}{f_{2}(\alpha(n))} \leqslant D$.
\end{enumerate}

Let  $\pi+\psi: X_{2}+_{\partial_{f_{2}}}\partial_{f_{2}}(X_{2}) \rightarrow X_{1}+_{\partial_{f_{1}}}\partial_{f_{1}}(X_{1})$ be the continuous extension of $\pi$. Then $\psi$ is a homeomorphism.
\end{prop}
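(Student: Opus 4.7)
The plan is to apply \textbf{Proposition \ref{extensionfloyd}} twice, once in each direction, and then use the ``Changing maps'' proposition at the beginning of this section together with perspectivity of the Floyd compactifications to force both compositions of the resulting extensions to be identities.

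First, since $\pi$ is a quasi-isometry, it has a quasi-inverse $\varpi: X_{1} \to X_{2}$, which is again an $\alpha'$-quasi-isometric map for a distortion function $\alpha'$ controlled by $\alpha$. The symmetric distortion hypothesis $f_{1}(n)/f_{2}(\alpha(n)) \leqslant D$ is precisely what is needed to apply \textbf{Proposition \ref{extensionfloyd}} to $\varpi$ with the roles of $\Gamma_{1}$ and $\Gamma_{2}$ swapped, yielding a continuous extension $\varpi + \psi': X_{1} +_{\partial_{f_{1}}} \partial_{f_{1}}(X_{1}) \to X_{2} +_{\partial_{f_{2}}} \partial_{f_{2}}(X_{2})$.

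Next, I would consider the continuous composition $(\pi + \psi) \circ (\varpi + \psi')$, which equals $(\pi \circ \varpi) + (\psi \circ \psi')$ as a self-map of $X_{1} +_{\partial_{f_{1}}} \partial_{f_{1}}(X_{1})$. Since $\varpi$ is a quasi-inverse of $\pi$, the map $\pi \circ \varpi$ is close to $id_{X_{1}}$ in $\varepsilon_{d_{1}}$, and the Floyd compactification is perspective with respect to $\varepsilon_{d_{1}}$. Hence the ``Changing maps'' proposition of this section allows us to replace $\pi \circ \varpi$ by $id_{X_{1}}$ and conclude that $id_{X_{1}} + (\psi \circ \psi')$ is continuous. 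Comparing this with the identity map of the Floyd compactification on the dense subset $X_{1}$ and invoking Hausdorffness forces $\psi \circ \psi' = id_{\partial_{f_{1}}(X_{1})}$. By symmetry, the same argument with $(\varpi + \psi') \circ (\pi + \psi)$ yields $\psi' \circ \psi = id_{\partial_{f_{2}}(X_{2})}$. Therefore $\psi$ is a continuous bijection between compact Hausdorff spaces whose inverse $\psi'$ is also continuous, so it is a homeomorphism.

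The main obstacle, and the point requiring some care, is the verification that the quasi-inverse $\varpi$ actually satisfies the hypothesis of \textbf{Proposition \ref{extensionfloyd}}: one needs a concrete distortion function $\alpha'$ for $\varpi$ such that $f_{1}(n)/f_{2}(\alpha'(n))$ is bounded. The symmetric condition added in the current statement is precisely tailored for this (taking $\alpha' = \alpha$), but one should confirm that a quasi-inverse can be chosen whose distortion does not strictly exceed $\alpha$. Once that is settled, the remaining steps reduce to purely formal consequences of perspectivity and the density of $X_{i}$ in its Floyd compactification.
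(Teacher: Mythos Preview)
Your proposal is correct and follows essentially the same route as the paper: extend the quasi-inverse via \textbf{Proposition \ref{extensionfloyd}}, compose both ways, use the ``Changing maps'' proposition together with perspectivity to replace $\pi\circ\varpi$ and $\varpi\circ\pi$ by identities, and conclude by density. The only point you flag as uncertain---whether $\varpi$ may be taken with distortion $\alpha$---is handled in the paper by \emph{definition}: the Remark following the statement stipulates that an $\alpha$-quasi-isometry is one whose quasi-inverse is itself $\alpha$-quasi-isometric, so no extra work is needed there.
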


\begin{obs}Here we need that $\pi$ is a $\alpha$-quasi-isometry, i.e., it is a $\alpha$-quasi-isometric map, it has a quasi-inverse and this quasi-inverse is also a $\alpha$-quasi-isometric map.
\end{obs}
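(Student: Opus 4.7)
The final item in the excerpt is an \textbf{obs} (remark), not a proposition, lemma, or claim. Its content is purely terminological: it records the convention that in this subsection the phrase ``$\pi$ is an $\alpha$-quasi-isometry'' is shorthand for the conjunction of three conditions, namely that $\pi$ is an $\alpha$-quasi-isometric map, that $\pi$ admits a quasi-inverse, and that this quasi-inverse is itself an $\alpha$-quasi-isometric map with the same distortion control $\alpha$. As a definitional clarification there is no mathematical statement to be established, and consequently no proof strategy to sketch. A ``proof proposal'' in the usual sense is therefore vacuous: the remark is true by stipulation, and nothing follows from it except the right to unpack the hypothesis of Proposition \ref{extensionfloydhomeo} into these three ingredients.

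What I would instead supply, if anything, is a short justification of \emph{why} the author chose to state this convention at exactly this point in the text. The preceding Proposition \ref{extensionfloyd} is one-sided: it takes a single $\alpha$-quasi-isometric map $\pi\colon X_{2}\to X_{1}$ together with the one-sided Floyd comparison $f_{2}(n)/f_{1}(\alpha(n)) \leqslant D$, and produces the continuous extension $\pi+\psi$. Proposition \ref{extensionfloydhomeo} asks for $\psi$ to be a homeomorphism, and its hypothesis symmetrizes both the Floyd comparison (the bound now goes in both directions) and, correspondingly, the map itself. The remark exists precisely to warn the reader that the symmetric hypothesis ``$\pi$ is an $\alpha$-quasi-isometry'' is genuinely stronger than ``$\pi$ is an $\alpha$-quasi-isometric map with a quasi-inverse,'' because one needs the quasi-inverse $\varpi$ to also satisfy the $\alpha$-distortion bound in order to apply Proposition \ref{extensionfloyd} a second time to $\varpi$ and obtain the continuous candidate inverse of $\psi$.

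There is therefore no obstacle to overcome and no proof to write: the remark is a pointer, clarifying notation for the reader and flagging where symmetry is being invoked. Any ``hardness'' has already been absorbed into Proposition \ref{extensionfloyd} and into the proof of Proposition \ref{extensionfloydhomeo} itself, not into the remark.
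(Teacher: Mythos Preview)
Your assessment is correct: the \textbf{obs} is a purely terminological remark with no proof in the paper, and your explanation of its purpose matches exactly how the hypothesis is used in the proof of Proposition~\ref{extensionfloydhomeo}, where the quasi-inverse $\varpi$ must itself be $\alpha$-quasi-isometric so that Proposition~\ref{extensionfloyd} can be applied to it to produce the continuous extension $\varpi+\phi$.
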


\begin{proof}Let $\varpi$ be the quasi-inverse of $\pi$. Then the map $\varpi$ also extends to a continuous map $\varpi+\phi:  X_{1}+_{\partial_{f_{1}}}\partial_{f_{1}}(X_{1}) \rightarrow X_{2}+_{\partial_{f_{2}}}\partial_{f_{2}}(X_{2})$, which implies that the map $(\varpi+\phi) \circ (\pi+\psi): X_{2}+_{\partial_{f_{2}}}\partial_{f_{2}}(X_{2}) \rightarrow X_{2}+_{\partial_{f_{2}}}\partial_{f_{2}}(X_{2})$ is continuous. Since the map $\varpi \circ \pi$ is close to $id_{X_{2}}$, we have that the map $id+(\phi \circ \psi): X_{2}+_{\partial_{f_{2}}}\partial_{f_{2}}(X_{2}) \rightarrow X_{2}+_{\partial_{f_{2}}}\partial_{f_{2}}(X_{2})$ is continuous. But $X_{2}$ is dense on $X_{2}+_{\partial_{f_{2}}}\partial_{f_{2}}(X_{2})$, which implies that $\phi \circ \psi = id_{\partial_{f_{2}}(X_{2})}$. Analogously, $\psi\circ \phi = id_{\partial_{f_{1}}(X_{1})}$, which implies that $\psi$ is a homeomorphism.
\end{proof}

\begin{cor}\label{preservingfloydcompactifications}Let $\Gamma_{1}$ and $\Gamma_{2}$ be locally finite connected graphs, $(X_{i},d_{i})$ the set of vertices of $\Gamma_{i}$ with its induced metric  and $\pi: X_{2} \rightarrow X_{1}$ a $\alpha$-quasi-isometry, where $\alpha: \N \rightarrow\N$ is a distorted map satisfying:

\begin{enumerate}
    \item $\exists D > 0$ such that $\forall n \in \N$, $\frac{f_{2}(n)}{f_{1}(\alpha(n))} \leqslant D$ and $\frac{f_{1}(n)}{f_{2}(\alpha(n))} \leqslant D$.
\end{enumerate}

Then the induced functor $\Pi$ sends the Floyd compactification of $X_{1}$ to a space that is isomorphic to the Floyd compactification of $X_{2}$.
\end{cor}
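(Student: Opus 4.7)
The plan is to chain together the three preceding results and recognize that the corollary is essentially a direct assembly.

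First I would observe that both Floyd compactifications $X_{i}+_{\partial_{f_{i}}}\partial_{f_{i}}(X_{i})$ have already been shown, in the proposition just before Section 2, to be perspective with respect to $\varepsilon_{d_{i}}$, so they are objects of $Pers(\varepsilon_{d_{i}})$. Moreover, since each $X_{i}$ is the vertex set of a locally finite connected graph endowed with the induced graph metric $d_{i}$, it is discrete as a topological space (any two distinct vertices are at distance at least $1$), so the $\alpha$-quasi-isometry $\pi:X_{2}\to X_{1}$ is automatically continuous; and it is a coarse equivalence between $(X_{2},\varepsilon_{d_{2}})$ and $(X_{1},\varepsilon_{d_{1}})$ for the bounded coarse structures, because by hypothesis $\pi$ admits an $\alpha$-quasi-isometric quasi-inverse $\varpi:X_{1}\to X_{2}$. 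These checks put us squarely inside the setting demanded by \textbf{Proposition \ref{continuouscriteriaforisomorphism}}.

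Next I would invoke \textbf{Proposition \ref{extensionfloyd}} with the half of the hypothesis that reads $f_{2}(n)/f_{1}(\alpha(n))\leqslant D$ to obtain a continuous extension
\[
\pi+\psi:\; X_{2}+_{\partial_{f_{2}}}\partial_{f_{2}}(X_{2})\longrightarrow X_{1}+_{\partial_{f_{1}}}\partial_{f_{1}}(X_{1}).
\]
The symmetric half $f_{1}(n)/f_{2}(\alpha(n))\leqslant D$ gives, by the same proposition applied to $\varpi$, the analogous continuous extension on the Floyd compactification in the other direction, and therefore \textbf{Proposition \ref{extensionfloydhomeo}} upgrades $\psi$ to a homeomorphism between $\partial_{f_{2}}(X_{2})$ and $\partial_{f_{1}}(X_{1})$.

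Finally I would apply the \emph{moreover} clause of \textbf{Proposition \ref{continuouscriteriaforisomorphism}} to the continuous coarse equivalence $\pi$ and the two perspective compactifications $X_{1}+_{\partial_{f_{1}}}\partial_{f_{1}}(X_{1})$ and $X_{2}+_{\partial_{f_{2}}}\partial_{f_{2}}(X_{2})$: since the extension $\pi+\psi$ is continuous and $\psi$ is a homeomorphism, $\Pi(X_{1}+_{\partial_{f_{1}}}\partial_{f_{1}}(X_{1}))$ is isomorphic to $X_{2}+_{\partial_{f_{2}}}\partial_{f_{2}}(X_{2})$ via $id+\psi$, which is exactly the conclusion. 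The real content has already been absorbed into the three cited results---the Gerasimov--Potyagailo extension theorem, its symmetric upgrade, and the categorical criterion for identifying pullback compactifications---so the only thing to verify carefully at this stage is that the hypotheses of \textbf{Proposition \ref{continuouscriteriaforisomorphism}} are indeed satisfied, which is immediate from the discreteness of $X_{i}$ and the two-sided $\alpha$-quasi-isometry hypothesis; there is essentially no further obstacle.
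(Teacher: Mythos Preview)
Your proposal is correct and follows essentially the same route as the paper's proof: invoke \textbf{Proposition~\ref{extensionfloydhomeo}} to get the continuous extension $\pi+\psi$ with $\psi$ a homeomorphism, then apply \textbf{Proposition~\ref{continuouscriteriaforisomorphism}} to conclude. You simply spell out more of the hypothesis-checking (perspectivity of the Floyd compactifications, continuity of $\pi$ via discreteness of the vertex sets) than the paper does, but the argument is the same.
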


\begin{proof}By \textbf{Proposition \ref{extensionfloydhomeo}}, we have that the map $\pi$ induces a continuous map $\pi+\psi: X_{2}+_{\partial_{f_{2}}}\partial_{f_{2}}(X_{2}) \rightarrow X_{1}+_{\partial_{f_{1}}}\partial_{f_{1}}(X_{1})$, where $\psi$ is a homeomorphism. So, by \textbf{Proposition \ref{continuouscriteriaforisomorphism}}, $X_{2}+_{\partial_{f_{2}}}\partial_{f_{2}}(X_{2})$ is equivalent to $\Pi(X_{1}+_{\partial_{f_{1}}}\partial_{f_{1}}(X_{1}))$.
\end{proof}

By Proposition 2.4 of \cite{GP}, the Floyd compactification of a locally finite graph is $\Phi_{v}$-accessible for any vertex $v$. As a consequence, we have the following:

\begin{prop}Let $\Gamma$ be a locally finite connected graph, $(X,d)$ the set of vertices of $\Gamma$ with its induced metric and $f$ a Floyd map. Then there exists a surjective map $id+\varpi: X+_{f_{\infty}}\partial_{\infty}(X) \rightarrow X+_{\partial_{f}}\partial_{f}(X)$. \eod
\end{prop}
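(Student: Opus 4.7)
The plan is to read $X+_{f_{\infty}}\partial_{\infty}(X)$ in this statement as the limit (within $Pers(\varepsilon_{d})$) of all $\Phi_{v}$-accessible perspective compactifications of $(X,\varepsilon_{d})$, which is the natural generalization of the identification established earlier for the $\delta$-hyperbolic case. Because $Pers(\varepsilon_{d})$ is closed under small limits (shown in the \textbf{Limits} section), this object exists for any locally finite connected graph, even in the absence of a hyperbolicity hypothesis.

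With this interpretation, the argument is very short. First, I would invoke the earlier remark that the Floyd compactification $X+_{\partial_{f}}\partial_{f}(X)$ lies in $Pers(\varepsilon_{d})$ (this was established immediately after the definition of the Floyd compactification via the Karlsson property). Second, I would invoke the cited \textbf{Proposition 2.4 of \cite{GP}} to conclude that $X+_{\partial_{f}}\partial_{f}(X)$ is $\Phi_{v}$-accessible for any vertex $v$. Thus the Floyd compactification is one of the compactifications over which the limit defining $X+_{f_{\infty}}\partial_{\infty}(X)$ is taken, so the universal property of the limit supplies a canonical continuous morphism
$$id+\varpi:\, X+_{f_{\infty}}\partial_{\infty}(X)\longrightarrow X+_{\partial_{f}}\partial_{f}(X),$$
which restricts to the identity on $X$.

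For surjectivity I would run the standard density argument: $id+\varpi$ is continuous, its domain is compact and its codomain is Hausdorff, so its image is closed in $X+_{\partial_{f}}\partial_{f}(X)$. But this image contains $X$, which is dense in the Floyd compactification, so the image must be the whole space. Hence $id+\varpi$ is surjective.

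I do not expect any genuine obstacle here; the entire content of the statement is packaged into the universal property of the limit together with the two facts that Floyd is perspective and $\Phi_{v}$-accessible, which is precisely why the excerpt marks this proposition with $\eod$ immediately after the statement. The only point requiring mild care is the interpretation of $X+_{f_{\infty}}\partial_{\infty}(X)$ outside the hyperbolic setting, which is resolved by taking the ``limit of $\Phi_{v}$-accessible compactifications'' characterization as the definition.
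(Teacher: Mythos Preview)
Your proposal is correct and matches the paper's intended argument: the lead-in sentence ``By Proposition 2.4 of \cite{GP}, the Floyd compactification of a locally finite graph is $\Phi_{v}$-accessible for any vertex $v$. As a consequence, we have the following:'' makes clear that the entire proof is exactly the two-step inference you describe---Floyd is $\Phi_{v}$-accessible, and the object $X+_{f_{\infty}}\partial_{\infty}(X)$ dominates every $\Phi_{v}$-accessible compactification---together with the density argument for surjectivity. Your observation that, absent a hyperbolicity hypothesis, $X+_{f_{\infty}}\partial_{\infty}(X)$ should be read as the limit of all $\Phi_{v}$-accessible perspective compactifications is a reasonable resolution of a point the paper leaves implicit; alternatively one may simply read the statement as carrying an implicit $\delta$-hyperbolicity assumption so that $\partial_{\infty}(X)$ has its standard meaning and the earlier limit characterization applies directly.
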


\begin{obs}The version of this proposition for groups is well known. It is a consequence of Proposition 3.4.6 of \cite{Ge2}.
\end{obs}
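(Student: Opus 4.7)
The statement to address is the remark that the preceding graph-theoretic proposition has a well-known counterpart in the group setting, obtained directly from Proposition 3.4.6 of \cite{Ge2}. Since this is an observation invoking an external reference rather than an independent theorem, my plan is to sketch the translation between the language of \cite{Ge2} and the language of this paper, showing that Gerasimov's proposition really does produce the desired surjection at the group level.

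The plan is as follows. First, I would specialize the setting: take $G$ a finitely generated group with a fixed word metric, so that the Cayley graph $\Gamma(G)$ is a locally finite connected graph whose vertex set is $G$. In this case the Floyd compactification $G+_{\partial_f}\partial_f(G)$ is an object of $EPers_{0}(G)$: that the action extends with the convergence property is Gerasimov--Potyagailo (Proposition 4.3.1 of \cite{GP}), and by \textbf{Proposition \ref{convergenceisperspective}} combined with \textbf{Proposition \ref{coarsegroup}} it is also coarse-geometric perspective with respect to $\varepsilon_{G}$. Similarly, when a geodesic-ray-based ``hyperbolic'' compactification of $G$ is available and carries a continuous $G$-action, it sits in the same category.

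Second, I would invoke Proposition 3.4.6 of \cite{Ge2} directly: it provides the equivariant continuous identity extension between the two group-theoretic perspective compactifications of $G$, namely a morphism $id+\varpi$ in $EPers_{0}(G)$ from the ray-type compactification to the Floyd compactification. Surjectivity of $\varpi$ is automatic from compactness of the domain together with density of $G$ in both spaces and the fact that $G \subseteq id+\varpi(\text{domain})$. This is exactly the group analogue of the preceding proposition, with the map $id+\varpi$ being realized as a morphism in $EPers_{0}(G)$ rather than merely in $Comp(G)$.

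The only subtlety—and the main point worth writing down explicitly—is matching conventions: one must check that the decay conditions on the Floyd function assumed in \cite{Ge2} coincide with the two conditions stated at the opening of this paper's Floyd subsection, and that Gerasimov's notion of ``extension'' is the same as the extension $id+\varpi$ of an admissible map in the Artin--Wraith sense. Both checks are routine since the hypotheses on the Floyd function in \cite{Ge2} are the standard ones, and the morphisms of $EPers_{0}(G)$ are by definition continuous equivariant extensions of the identity. Once this dictionary is fixed, the group version is a direct quotation of \cite{Ge2}, which is precisely what the remark is asserting.
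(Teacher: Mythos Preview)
The paper provides no proof of this remark; the remark \emph{is} the proof, in the sense that it is nothing more than a citation pointing the reader to Proposition~3.4.6 of \cite{Ge2}. Your proposal is a correct and considerably more detailed unpacking of what that citation asserts, so in that sense it goes beyond what the paper does rather than matching it.

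A minor comment on your framing: you route the argument through the categorical machinery of $EPers_{0}(G)$ (verifying that both compactifications lie there and that the morphism is equivariant), whereas the remark only needs the bare existence of the continuous surjection $id+\varpi$. Gerasimov's proposition already delivers that map directly, and surjectivity follows, as you note, from compactness of the domain and density of $G$. The detour through perspectivity and equivariance is correct but not required for what the remark claims; it does, however, make explicit why the group case fits seamlessly into the paper's categorical language, which is a useful observation even if the paper does not spell it out.
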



\begin{thebibliography}{99}

\bibitem{BH}
M. R. Bridson and A. Haefliger,
\textit{Metric spaces of non-positiive curvature}.
Springer,  1999.

\bibitem{Bou}
N. Bourbaki,
\textit{Elements of Mathematics. General Topology. Part 1}.
Hermann, Paris; Addison-Wesley Publishing Co., Reading, Mass.-London-Don Mills, Ont.,  1966.
Zbl 0301.54001 MR 0205210


\bibitem{Fl}
W. J. Floyd, Group completions and limit sets of Kleinian groups.
\textit{Invent. Math.}~\textbf{57}
(1980), 205-218.

\bibitem{Ge1}
V. Gerasimov, Expansive convergence groups are relatively hyperbolic.
\textit{Geometric and Functional Analysis}~\textbf{19}
(2009), 137-169.

\bibitem{Ge2}
V. Gerasimov, Floyd maps for relatively hyperbolic groups.
\textit{Geometric and Functional Analysis}~\textbf{22}
(2012), 1361-1399.
Zbl 1276.20050 MR 2989436

\bibitem{GP}
V. Gerasimov and L. Potyagailo, Quasi-isometric maps and Floyd boundaries of relatively hyperbolic groups.
\textit{J. Eur. Math. Soc.}~\textbf{15}
(2013), 2115-2137.

\bibitem{GP2}
V. Gerasimov and L. Potyagailo, Similar relatively hyperbolic actions of a group. \textit{International Mathematics Research Notices}~\textbf{2016},
(2016), no. 7, 2068-2103.
Zbl 1402.20055 MR 3509947

\bibitem{Gra}
B. Grave, Coarse geometry and asymptotic dimension.
Ph.D. thesis. Georg-August-University, Göttingen, 2005.

\bibitem{Gro}
B. Groff, Quasi-isometries, boundaries and JSJ-decompositions of relatively hyperbolic groups. \textit{Journal of Topology and Analysis}~\textbf{5(4)},
(2013), 451-475.

\bibitem{GrM}
D. Groves and J. F. Manning, Dehn fillings in relatively hyperbolic groups. \textit{Israel J. Math.}~\textbf{168},
(2008), 317-429.

\bibitem{GM}
C. R. Guilbault and M. A. Moran, Proper homotopy types and $\mathcal{Z}$-boundaries of spaces admitting geometric group actions.
\textit{Expositiones Mathematicae}~\textbf{37}
(2018), 292-313.

\bibitem{Pa}
A. Papadopoulos,
\textit{Metric Spaces, Convexity, and Nonpositive Curvature}.
IRMA Lectures in Mathematics and Theoretical Physics 6, European Mathematical Society, 2005.

\bibitem{Ro}
J. Roe,
\textit{Lectures on Coarse Geometry}.
University Lecture Series, American Mathematical Society, 2003.

\bibitem{So2}
L. H. R. de Souza, A generalization of convergence actions.\\
Preprint~2019. arXiv 1903.11746 [math.GR]

\bibitem{So}
L. H. R. de Souza, Gluing spaces without identifying points \\
Preprint~2020. arXiv 2004.01845v2 [math.GN]

\bibitem{Tu}
P. Tukia, Conical limit points and uniform convergence groups.
\textit{J. rein angew Math}~\textbf{501},
(1998), 71-98.


\end{thebibliography}
\end{document}